 \newtheorem{thm}{Theorem}
 \newtheorem{conj}[thm]{Conjecture}
 \newtheorem{lem}[thm]{Lemma}
 \newtheorem{prop}[thm]{Proposition}
 \theoremstyle{definition}
 \newtheorem{defn}[thm]{Definition}
 \theoremstyle{definition}
 \newtheorem{notn}[thm]{Notation}
 \theoremstyle{remark}
 \newtheorem{rem}[thm]{Remark}
 \theoremstyle{definition}
 \newtheorem{example}[thm]{Example}
\numberwithin{thm}{section}
\numberwithin{equation}{section}
 \newcommand{\Div}{\mathrm{Div}}
 \newcommand{\End}{\mathrm{End}}
 \newcommand{\ord}{\mathrm{ord}}
 \newcommand{\GL}{\mathrm{GL}}
 \newcommand{\PGL}{\mathrm{PGL}}
 \newcommand{\tor}{\mathrm{tor}}
 \renewcommand{\mod}{\mathrm{mod}}
 \newcommand{\fb}{\mathfrak b}
 \newcommand{\fp}{\mathfrak p}
 \newcommand{\fq}{\mathfrak q}
 \newcommand{\fr}{\mathfrak r}
 \newcommand{\fn}{\mathfrak n}
 \newcommand{\fm}{\mathfrak m}
 \newcommand{\fd}{\mathfrak d}
 \newcommand{\fE}{\mathfrak E}
 \newcommand{\cO}{\mathcal{O}}
 \renewcommand{\cH}{\mathcal{H}}
 \newcommand{\cE}{\mathcal{E}}
 \newcommand{\cC}{\mathcal{C}}
 \newcommand{\cI}{\mathcal{I}}
 \newcommand{\cT}{\mathcal{T}}
\newcommand{\sT}{\mathscr{T}}
\newcommand{\bone}{{\bm{1}}}
\newcommand{\beps}{{\bm{\epsilon}}}
 \newcommand{\C}{\mathbb{C}}
 \newcommand{\F}{\mathbb{F}}
 \newcommand{\Q}{\mathbb{Q}}
 \newcommand{\W}{\mathbb{W}}
 \newcommand{\Z}{\mathbb{Z}}
 \newcommand{\E}{\mathbb{E}}
 \newcommand{\p}{\mathbb{P}}
 \newcommand{\T}{\mathbb{T}}
 \newcommand{\N}{\mathbb{N}}
 \newcommand{\tE}{\widetilde{E}}
 \newcommand{\ctH}{\widetilde{\mathcal{H}}}
 \newcommand{\ctE}{\widetilde{\mathcal{E}}}
 \newcommand{\eps}{\varepsilon}
 \newcommand{\G}{\Gamma}
 \newcommand{\bs}{\setminus}
 \newcommand{\Fi}{F_\infty}
 \newcommand{\la}{\lambda}
\begin{document}

\title[Rational torsion subgroups and Eisenstein cochains]
{The rational torsion subgroups of Drinfeld modular Jacobians and Eisenstein pseudo-harmonic cochains}

\author{Mihran Papikian}
\address{Department of Mathematics, Pennsylvania State University, University Park, PA 16802, U.S.A.}
\email{papikian@psu.edu}
\author{Fu-Tsun Wei}
\address{Institute of Mathematics, Academia Sinica, 6F, Astronomy-Mathematics Building, No. 1, Sec. 4, Roosevelt Road, Taipei 10617, Taiwan}
\email{ftwei@math.sinica.edu.tw}

\thanks{The first author's research was partially supported by grants from the Simons Foundation (245676) and the National Security Agency 
(H98230-15-1-0008).} 

\subjclass[2010]{11G09, 11G18, 11F12}

\keywords{Drinfeld modular curves; Cuspidal divisor group; Eisenstein ideal; Pseudo-harmonic cochains}


\begin{abstract} Let $\fn$ be a square-free ideal of $\F_q[T]$. We study the 
rational torsion subgroup of the Jacobian variety $J_0(\fn)$ of the Drinfeld modular curve $X_0(\fn)$. 
We prove that for any prime number $\ell$ not dividing $q(q-1)$, the $\ell$-primary part of this group 
coincides with that of the cuspidal divisor class group. We further 
determine the structure of the $\ell$-primary part of the cuspidal divisor class group for any prime $\ell$ not dividing $q-1$. 
\end{abstract}

\maketitle


\section{Introduction} 

\subsection{Rational torsion of classical modular Jacobians} Let $N\geq 1$ be a positive integer. Let $X_0(N)$ be the 
modular curve over $\Q$ parametrizing the isomorphism classes of (generalized) elliptic curves with $\G_0(N)$-structures. 
The rational torsion subgroup $\cT(N):=J_0(N)(\Q)_\tor$ 
of the Jacobian variety $J_0(N)$ of $X_0(N)$ is a finite group by the Mordell-Weil theorem. The cuspidal 
divisor class group $\cC(N)$ of $J_0(N)$, i.e., the group generated by the classes of differences of two cusps of $X_0(N)$, 
is also finite by a theorem of Manin and Drinfeld.  In the early 1970s, for $N=p$ 
prime, Ogg computed that $\cC(p)\cong \Z\big/\frac{p-1}{(p-1,12)}\Z$ and conjectured that $\cT(p)=\cC(p)$; see \cite{OggBAMS}. 
In his seminal paper \cite{Mazur}, Mazur proved this conjecture by a detailed study of the 
Eisenstein ideal of the Hecke ring of level $p$. 

When $N$ is square-free, all cusps of $X_0(N)$ are rational over $\Q$. 
Therefore, $\cC(N)\subseteq \cT(N)$. A natural generalization of Ogg's conjecture then would predict an equality $\cC(N)=\cT(N)$.  
Recently, Ohta \cite{Ohta} proved the following theorem toward this conjecture (given a finite abelian group $G$, we denote by $G_\ell$ 
the maximal $\ell$-primary subgroup of $G$): 
\begin{thm}\label{thmMOhta}
Let $N$ be a square-free positive integer. We have $\cC(N)_\ell=\cT(N)_\ell$ for all 
prime numbers $\ell\geq 3$ when $N$ is not divisible by $3$; and for all prime numbers $\ell\geq 5$ 
when $N$ is divisible by $3$. 
\end{thm}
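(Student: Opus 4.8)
The plan is to reduce everything to the Eisenstein ideal, following Mazur's strategy for prime level. Since $N$ is square-free, all cusps of $X_0(N)$ are rational over $\Q$, so differences of cusps already lie in $J_0(N)(\Q)_\tor$; this gives the free inclusion $\cC(N)_\ell \subseteq \cT(N)_\ell$, and the entire problem is to prove the reverse inclusion $\cT(N)_\ell \subseteq \cC(N)_\ell$. Let $\T$ denote the Hecke algebra acting on $J_0(N)$ and let $\fI \subseteq \T$ be the Eisenstein ideal, generated by $T_p - p - 1$ for primes $p \nmid N$ together with suitable operators $U_q - 1$ for the primes $q \mid N$.

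First I would show that $\cT(N)_\ell$ is \emph{Eisenstein}, i.e. annihilated by $\fI$. For a prime $p$ of good reduction prime to $\ell$, reduction modulo $p$ embeds the prime-to-$p$ torsion of $J_0(N)(\overline{\Q})$ into the special fiber, and the Eichler--Shimura congruence relation expresses $T_p$ in terms of $\Frob_p$. Since $\Frob_p$ acts trivially on the $\Q$-rational reductions, $T_p$ acts as $1 + p$ on $\cT(N)$, and a parallel analysis at the bad primes $q \mid N$ handles the $U_q$; hence $\fI \cdot \cT(N)_\ell = 0$. The same computation on the cusps shows $\cC(N)_\ell$ is Eisenstein as well, so both groups are supported at the finitely many Eisenstein maximal ideals $\fm \supseteq (\ell, \fI)$, and it suffices to prove $\cT(N)_\fm = \cC(N)_\fm$ at each such $\fm$ after passing to $\fm$-primary parts.

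The heart of the argument is then a comparison of orders at a fixed Eisenstein prime $\fm$. Here I would (i) compute $\cC(N)_\fm$ and the quotient $\T_\fm/\fI$ explicitly --- at square-free level the Eisenstein maximal ideals are indexed by the divisors of $N$ and the Eisenstein series attached to them, and the cuspidal group can be evaluated via the known divisors of cusps --- and show $|\cC(N)_\fm| = |\T_\fm/\fI|$; and (ii) prove that $\cT(N)_\fm$ embeds into a cyclic $\T_\fm/\fI$-module, giving $|\cT(N)_\fm| \le |\T_\fm/\fI|$. Combined with $\cC(N)_\fm \subseteq \cT(N)_\fm$ and the order equality, this forces equality. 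Step (ii) rests on a \emph{multiplicity one} statement: that $J_0(N)[\fm]$ is two-dimensional over $\T/\fm$, equivalently that $\T_\fm$ is Gorenstein, which controls the Galois module structure of the $\fm$-torsion through the associated reducible two-dimensional representation.

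The main obstacle is exactly this multiplicity-one / Gorenstein property at Eisenstein primes of \emph{non-prime} square-free level. Unlike Mazur's prime-level case, one must contend with the operators $U_q$ at the several primes $q \mid N$, with the degeneracy maps between levels that produce ``old'' contributions, and with the component groups of the N\'eron model at the $q \mid N$, which themselves feed torsion classes into $\cT(N)$. Establishing that the $\fm$-adic Tate module is free of rank one over $\T_\fm$ requires a delicate study of finite flat group schemes and of the local behavior of the Galois representation at $\ell$ and at the $q \mid N$; it is precisely the failure of these arguments at the smallest primes --- where the relevant finite flat group schemes are not rigid enough and the Eisenstein series degenerate --- that forces the exclusion of $\ell = 2$ throughout and of $\ell = 3$ when $3 \mid N$.
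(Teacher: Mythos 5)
This statement is Ohta's theorem, which the paper quotes from \cite{Ohta} rather than proving; the only ``proof'' in the paper is the one-paragraph description of Ohta's strategy following the theorem. Measured against that, your outline reproduces Mazur's prime-level template but has a genuine gap at its central step. Your step (ii) bounds $\cT(N)_\fm$ by $|\T_\fm/\fI|$ via multiplicity one, i.e.\ the Gorenstein property of $\T_\fm$ at Eisenstein primes. For composite square-free $N$ this is not available: multiplicity one is known to fail at some Eisenstein maximal ideals of non-prime square-free level (this is exactly the old/new and degeneracy-map phenomenon you list as ``the main obstacle'' without resolving it), and the paper explicitly signals that the ring-theoretic/Gorenstein input is the ``deeper technique'' needed only for $\ell=2$ and is \emph{beyond} the method used by Ohta. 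Ohta's actual argument for odd $\ell$ obtains the inequality $|\cT(N)_\fm|\le|\T_\fm/\fI\T_\fm|$ by a softer analysis of the Galois module $J_0(N)[\fI]$ (a filtration by constant and $\mu$-type pieces), and then matches $|\T_\fm/\fI|$ with $\#\cC(N)_\fm$ via Takagi's formula, up to powers of $2$. As written, your step (ii) would simply fail at the bad Eisenstein primes.

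A second, related divergence: you generate the Eisenstein ideal with $U_q-1$ for $q\mid N$. The paper stresses that Ohta's key observation --- the one this paper itself imports for the Drinfeld setting --- is to work instead with the algebra generated by the $T_p$ for $p\nmid N$ together with the Atkin--Lehner involutions $W_q$ for $q\mid N$. With $U_q$ one must choose an eigenvalue ($1$ or $q$) at each $q\mid N$, the Eisenstein maximal ideals proliferate and interact with the old subvarieties, and the index computation no longer lines up cleanly with the order of the cuspidal group. Finally, two smaller inaccuracies: the exclusion of $\ell=3$ when $3\mid N$ arises (per the paper) from the existence of constant mod-$3$ modular forms of level $1$ and weight $2$, not from rigidity of finite flat group schemes; and $\ell=2$ is excluded because the entire index/order comparison (Takagi's formula, the eigenspace decompositions) is only valid up to powers of $2$, not because of a finite-flat obstruction.
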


The proof of this theorem again relies on the study of Eisenstein ideals. Ohta 
computes the index of the Eisentein ideal in an appropriate Hecke algebra, up to a power of $2$, which turns out to be 
the order of $\cC(N)$ by a formula of Takagi \cite{Takagi}, again up to a power of $2$. One 
important observation that Ohta makes is that the Hecke algebra best suited for the purpose of proving $\cT(N)=\cC(N)$ 
in the square-free case is the algebra generated by the Hecke operators $T_p$, for prime numbers $p$ 
not dividing $N$, and the Atkin-Lehner involutions $W_p$ for $p\mid N$ (instead of $U_p$ operators). 
The restriction $3\nmid N$ in Theorem \ref{thmMOhta} in the case $\ell=3$ is of technical nature, partly arising 
from the existence of constant modular forms over $\F_3$ of level $1$ and weight $2$. 
On the other hand, the proof of $\cC(N)_2=\cT(N)_2$ is beyond the reach of the method used by Ohta. 
Even in the prime level case considered by Mazur \cite{Mazur}, proving $\cC(p)_2=\cT(p)_2$ requires deeper techniques 
related to the ring-theoretic properties of the Hecke algebra.
 
\subsection{Rational torsion of Drinfeld modular Jacobians}
Let $\F_q$ be a finite field with $q$ 
elements, where $q$ is a power of a prime number $p$. Let $A=\F_q[T]$ 
be the ring of polynomials in indeterminate $T$ with coefficients 
in $\F_q$, and $F=\F_q(T)$ the field of fractions of $A$. 
The degree map $\deg: F\to \Z\cup \{-\infty\}$, which associates 
to a non-zero polynomial its degree in $T$ and $\deg(0)=-\infty$, defines 
a norm on $F$ by $|a|:=q^{\deg(a)}$; the corresponding place of $F$ 
is usually called the \textit{place at infinity} and is denoted by $\infty$. 
We also define a norm and degree on the ideals of $A$ by $|\fn|:=\#(A/\fn)$ and $\deg(\fn):=\log_q|\fn|$. 
Let $\Fi$ denote the completion of $F$ at $\infty$, and $\C_\infty$ denote the 
completion of an algebraic closure of $\Fi$. The \textit{Drinfeld half-plane} $\Omega:=\C_\infty - \Fi$   
has a natural structure of a smooth connected rigid-analytic space over $\Fi$; see \cite[$\S$1]{GR}. 

Let $\fn\lhd A$ be a non-zero ideal. The level-$\fn$ \textit{Hecke congruence subgroup} of $\GL_2(A)$  
$$
\G_0(\fn):=\left\{\begin{pmatrix} a & b \\  c & d\end{pmatrix}\in \GL_2(A)\ \bigg|\ c\in \fn \right\}  
$$
plays a central role in this paper. This group acts on $\Omega$ via linear fractional transformations. 
Drinfeld proved in \cite{Drinfeld} that the quotient $\G_0(\fn)\bs \Omega$ 
is the space of $\C_\infty$-points of an affine curve $Y_0(\fn)$ defined over $F$,  
which is a moduli space of rank-$2$ Drinfeld modules.  
The unique smooth projective curve over $F$ containing $Y_0(\fn)$ as an 
open subvariety is denoted by $X_0(\fn)$. The \textit{cusps} of $X_0(\fn)$ are the 
finitely many geometric points of the complement of $Y_0(\fn)$ in $X_0(\fn)$. 
Let $J_0(\fn)$ be the Jacobian variety of $X_0(\fn)$. 
The \textit{cuspidal divisor group} $\cC(\fn)$ 
is the subgroup of $J_0(\fn)$ generated by the classes of divisors $c-c'$, where $c, c'$  
run through the set of cusps of $X_0(\fn)$.  
It is known that $\cC(\fn)$ is a finite group; see \cite{GekelerIJM}. 
By the Lang-N\'eron theorem, the group of $F$-rational points of $J_0(\fn)$ is 
finitely generated, in particular, its torsion subgroup $\cT(\fn):=J_0(\fn)(F)_\tor$ is finite. 
Finally, it is known that, when $\fn$ is square-free, all cusps of $X_0(\fn)$ are rational over $F$; see \cite[Prop. 6.7]{Invariants}. 
Therefore, $\cC(\fn)\subseteq \cT(\fn)$. In analogy with generalized Ogg conjecture that we discussed earlier, one can propose the following: 

\begin{conj}\label{conjGOC}
For a square-free ideal $\fn\lhd A$, we have $\cC(\fn)= \cT(\fn)$.
\end{conj}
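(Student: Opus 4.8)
The plan is to transport the Eisenstein-ideal method of Mazur \cite{Mazur} and Ohta \cite{Ohta} to the Drinfeld modular setting. Since $\cC(\fn)\subseteq\cT(\fn)$ is already known for square-free $\fn$ and both groups are finite, it suffices to prove the reverse inclusion, and we may do so one prime $\ell$ at a time, showing $\#\cC(\fn)_\ell=\#\cT(\fn)_\ell$. Following Ohta's observation, the Hecke algebra best adapted to this problem is the algebra $\T$ generated by the operators $T_\fp$ for primes $\fp\nmid\fn$ together with the Atkin--Lehner involutions $W_\fp$ for $\fp\mid\fn$, acting on the space of harmonic cochains for $\G_0(\fn)$ (the function-field analogue of weight-two cusp forms, which computes $J_0(\fn)$ and its Hecke action). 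Inside $\T$ one defines the Eisenstein ideal $\fI$ to be generated by the elements $T_\fp-(|\fp|+1)$ for $\fp\nmid\fn$ and $W_\fp+1$ for $\fp\mid\fn$, i.e. the ideal cutting out the systems of Hecke eigenvalues that match an Eisenstein series.

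The first main step is to prove that the rational torsion is \emph{Eisenstein}: the $\ell$-primary part $\cT(\fn)_\ell$ is annihilated by $\fI$, or equivalently is supported only at the maximal ideals of $\T$ containing $\fI$. That $\fI$ annihilates $\cC(\fn)$ is a direct computation, since the Hecke operators act on cuspidal divisors through their Eisenstein eigenvalues. The analogous statement for $\cT(\fn)$ is the deep input, and I would establish it by Mazur's strategy adapted to $F$: a nonzero rational $\ell$-torsion point of $J_0(\fn)$ generates a $\Gal(F^\sep/F)$-stable subgroup on which the attached mod-$\ell$ representation is reducible, and comparing Frobenius traces at good primes $\fp$ with the Eichler--Shimura congruences for harmonic cochains forces the eigenvalue relations $T_\fp\equiv|\fp|+1$. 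This localizes $\cT(\fn)_\ell$ inside the $\fI$-torsion $J_0(\fn)[\fI]$.

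The second main step is numerical: to compute the $\ell$-part of the index $[\T:\fI]$ and identify it with $\#\cC(\fn)_\ell$. Here one needs an explicit family of Eisenstein series for $\G_0(\fn)$ indexed by the cusps, together with the computation of their constant terms at all cusps; the resulting boundary-value matrix simultaneously determines the index of $\fI$ and the order of the cuspidal divisor class group, giving the function-field counterpart of Takagi's formula \cite{Takagi}. Because these Eisenstein series have nonzero constant terms they do not lie in the space of genuine harmonic cochains, so I expect to work in a suitably enlarged space of \emph{pseudo-harmonic} cochains on which $\T$ still acts and in which the Eisenstein series can be written down. Granting the first step, $\cT(\fn)_\ell$ embeds into $J_0(\fn)[\fI]$, whose order is controlled by $[\T:\fI]_\ell$; combined with the lower bound $\#\cC(\fn)_\ell$ coming from the cusps, this squeezes $\#\cC(\fn)_\ell\le\#\cT(\fn)_\ell\le[\T:\fI]_\ell=\#\cC(\fn)_\ell$ and yields equality.

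The main obstacle, and the reason one should not expect an unconditional proof for all $\ell$ by this route, lies at the small primes. For $\ell\mid q-1$ the Eisenstein series acquire extra constant terms and the comparison between the cuspidal group and the Eisenstein quotient degenerates, exactly paralleling Ohta's exclusion of small $\ell$ in Theorem \ref{thmMOhta}. For $\ell=p$ the whole \'etale comparison between torsion and harmonic cochains breaks down, since the $p$-torsion of $J_0(\fn)$ is invisible to $\ell$-adic cohomology and would demand a separate crystalline or Dieudonn\'e-theoretic analysis. A further subtlety peculiar to the Drinfeld setting is the place $\infty$: the relevant automorphic forms are precisely those that are special (Steinberg) at $\infty$, so the Eisenstein ideal must be set up to respect this, and one must check that the component groups of $J_0(\fn)$ at the primes dividing $\fn$ and at $\infty$ contribute only cuspidal classes. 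It is precisely these $\ell\mid q-1$ and $\ell=p$ difficulties that confine a clean argument to primes $\ell\nmid q(q-1)$, which is the range in which I expect $\cC(\fn)_\ell=\cT(\fn)_\ell$ to be provable.
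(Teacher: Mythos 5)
You are attempting to prove a statement that the paper itself only states as a conjecture: the paper's actual result (Theorem \ref{thmMainT}, proved as Theorem \ref{thmLast}) establishes $\cC(\fn)_\ell=\cT(\fn)_\ell$ only for $\ell\nmid q(q-1)$, and your proposal likewise retreats to that range in its final paragraph. So at best you are sketching a proof of Theorem \ref{thmMainT}, not of Conjecture \ref{conjGOC}. Measured against that, there are two concrete problems. First, your Eisenstein ideal $\fI=(T_\fp-(|\fp|+1),\,W_\fp+1)$ isolates the single Atkin--Lehner eigencomponent $\beps=(-1,\dots,-1)$ and therefore does \emph{not} annihilate $\cC(\fn)$: the paper shows $W_{\fp_i}D^\beps=\eps_i D^\beps$ (Proposition \ref{thmCDG}) and that $\cC(\fn)_\ell^\beps\cong\Z_\ell/\frac{N(\fn,\beps)}{\nu(\beps)}\Z_\ell$ is generally nontrivial for \emph{every} $\beps\neq\bone$, including those with some $\eps_i=+1$ (Theorem \ref{thmLast}). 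Your "direct computation" in step one is false as stated; you need a separate ideal $\fI_\beps$ for each of the $2^s$ sign patterns and must sum the resulting contributions, which is exactly the eigenspace decomposition over $\E$ that organizes the paper. Second, the assertion that $\#J_0(\fn)[\fI]$ "is controlled by" $[\T:\fI]_\ell$ is not automatic; in Mazur's and Ohta's work this is where the hard module-theoretic input (multiplicity one, Gorenstein-type properties of the completed Hecke algebra) enters, and you give no substitute for it.

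The paper's route is genuinely different precisely at this second point, and this is worth absorbing: it never defines the Hecke algebra or computes an index. Instead it uses that $J_0(\fn)$ has split toric reduction at $\infty$, so for $\ell\nmid q(q-1)$ the canonical specialization $\cT(\fn)_\ell\to\Phi_\infty$ is injective (P\'al's trick), and Eichler--Shimura places the image inside $\cE_{0}(\fn,\Z_\ell/\ell^n\Z_\ell)^\beps$, a module of Eisenstein cuspidal harmonic cochains that is computed \emph{directly} by Fourier analysis on the Bruhat--Tits tree and Atkin--Lehner theory (Section \ref{sEPHC}), giving the upper bound $\Z_\ell/\frac{N(\fn,\beps)}{\nu(\beps)}\Z_\ell$; the matching lower bound on $\cC(\fn)_\ell^\beps$ comes from the explicit divisor $D^\beps$ and the Drinfeld discriminant function. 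Your instinct to introduce pseudo-harmonic cochains to house Eisenstein series with nonzero constant term does match the paper's Section \ref{sAL}, but without the injection into $\Phi_\infty$ (or a proved Gorenstein-type statement) your squeeze $\#\cC(\fn)_\ell\le\#\cT(\fn)_\ell\le[\T:\fI]_\ell$ has no justified middle inequality.
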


This statement for $\fn=\fp$ prime was proved by P\'al \cite{Pal}, following Mazur's method. 

Now let $\fn=\prod_{i=1}^s\fp_i$ be a square-free ideal in $A$ with the given prime decomposition. 
Let 
$$
\E=\{(\eps_1, \dots, \eps_s)\ |\ \eps_i=\pm 1,\ 1\leq i\leq s\}.
$$ 
We single out two (not necessarily distinct) elements of $\E$:
\begin{align*}
\beps_{H} &:=((-1)^{\deg(\fp_1)}, \dots, (-1)^{\deg(\fp_s)}), \text{ and} \\ 
\bone &:=(1,1,\dots, 1).
\end{align*}
For $\beps=(\eps_1, \dots, \eps_s)\in \E$, define 
$$
N(\beps)= 
\begin{cases}
1 & \text{if $\beps=\bone$};\\
\prod_{i=1}^s(1+\eps_i|\fp_i|) & \text{if $\beps=\beps_{H}$ and $\beps\neq \bone$}; \\
\frac{1}{q+1}\prod_{i=1}^s(1+\eps_i|\fp_i|) & \text{if $\beps\neq\beps_{H}$ and $\beps\neq \bone$}.  
\end{cases}
$$
The main result of this paper is the following: 
\begin{thm}\label{thmMainT} Let $\fn=\prod_{i=1}^s \fp_i$ be a square-free ideal in $A$. 
Let $\ell$ be a prime number not dividing $q(q-1)$. Then 
$$
\cT(\fn)_\ell=\cC(\fn)_\ell \cong \bigoplus_{\beps\in \E} \Z_\ell/N(\beps)\Z_\ell. 
$$ 
\end{thm}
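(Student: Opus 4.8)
The plan is to adapt the Eisenstein ideal method of Mazur, in the refined form used by Ohta and by P\'al, to the square-free Drinfeld setting, letting the theory of pseudo-harmonic cochains play the role of weight-$2$ modular forms. Because $\ell\nmid q(q-1)$ and $q$ is a power of $p$, we have in particular $\ell\neq 2$ and $\ell\neq p$; hence the group $W\cong(\Z/2\Z)^s$ generated by the Atkin-Lehner involutions $W_{\fp_1},\dots,W_{\fp_s}$ acts semisimply on both $\cT(\fn)_\ell$ and $\cC(\fn)_\ell$. Identifying the characters of $W$ with $\E$, so that $\beps=(\eps_1,\dots,\eps_s)$ records the eigenvalue $\eps_i$ of $W_{\fp_i}$, we obtain orthogonal decompositions $\cC(\fn)_\ell=\bigoplus_\beps\cC(\fn)_\ell[\beps]$ and $\cT(\fn)_\ell=\bigoplus_\beps\cT(\fn)_\ell[\beps]$. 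Since the inclusion $\cC(\fn)\subseteq\cT(\fn)$ is already known, it suffices to prove for each $\beps\in\E$ that
\begin{equation*}
\cC(\fn)_\ell[\beps]\cong\Z_\ell/N(\beps)\Z_\ell\qquad\text{and}\qquad\cT(\fn)_\ell[\beps]=\cC(\fn)_\ell[\beps].
\end{equation*}

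The first task is to compute $\cC(\fn)_\ell$ one eigenspace at a time. First I would parametrize the $2^s$ cusps of $X_0(\fn)$ by $\E$ in a $W$-equivariant way, and express the cuspidal divisor classes through the Eisenstein pseudo-harmonic cochains attached to the cusps. The order of a cuspidal class is controlled by the constant terms of the associated cochain, and a direct evaluation of these constant terms, together with the relations among cusps, should show that each $\beps$-eigenspace is cyclic of order the $\ell$-part of $N(\beps)$. The exceptional values $N(\bone)=1$ and the special shape of $N(\beps_H)$ reflect, respectively, the vanishing of the fully symmetric part (which descends from level one, where there is a single cusp and no cuspidal group) and the absence of the normalizing factor $q+1$ exactly at the character $\beps_H$ dictated by the Atkin-Lehner signs of the local components at the $\fp_i$. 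The hypothesis $\ell\nmid(q-1)$ guarantees that the relevant normalization is an $\ell$-adic unit, so the computation sees only the factors recorded in $N(\beps)$.

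The harder direction is the reverse inclusion $\cT(\fn)_\ell[\beps]\subseteq\cC(\fn)_\ell[\beps]$, for which I would show that $\cT(\fn)_\ell$ is \emph{Eisenstein}, i.e.\ annihilated by the Eisenstein ideal $\fI$ of the Hecke algebra $\T$ generated by the $T_\fq$ with $\fq\nmid\fn$ and the $W_{\fp_i}$. The mechanism is specialization: at a prime $\fq\nmid\fn$ of good reduction the prime-to-$p$ torsion of $J_0(\fn)$ injects into the special fibre, and the Eichler-Shimura congruence $\Frob_\fq+|\fq|\,\Frob_\fq^{-1}=T_\fq$ forces $T_\fq$ to act on the $F$-rational torsion through its Eisenstein eigenvalue $|\fq|+1$; thus $\cT(\fn)_\ell$ lies in the $\fI$-torsion $J_0(\fn)[\fI]$. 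It then remains to bound $J_0(\fn)[\fI]_\ell$: via the rigid-analytic (Mumford) uniformization of $J_0(\fn)$ at $\infty$ one identifies the relevant Tate module and component-group data with lattices of pseudo-harmonic cochains, reducing the bound on the $\beps$-part of the Eisenstein quotient to the same constant-term computation as above, giving order at most $N(\beps)$. Multiplicity one for the Eisenstein eigensystem on pseudo-harmonic cochains then upgrades these order bounds to the asserted equality.

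The main obstacle I expect is the exact determination of the index of $\fI$ in each Atkin-Lehner eigenspace and its matching with $N(\beps)$, together with the sharpness of the specialization bound, so that the two ends of the sandwich $\cC(\fn)_\ell[\beps]\subseteq\cT(\fn)_\ell[\beps]\subseteq J_0(\fn)[\fI]_\ell[\beps]$ have the common order $N(\beps)$ with no power of $\ell$ lost in between. The delicate points are the two exceptional characters: controlling the $\beps=\bone$ contribution requires ruling out spurious Eisenstein classes coming from constant cochains (the analogue of the constant forms responsible for Ohta's restriction at small primes), while the factor $q+1$ must be tracked carefully since $\ell\mid(q+1)$ is \emph{not} excluded by the hypothesis and so genuinely lowers the $\ell$-valuation in the generic eigenspaces. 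Making the pseudo-harmonic cochain formalism carry both the cuspidal-group computation and the Eisenstein-ideal index simultaneously, with matching normalizations, is where the essential work lies.
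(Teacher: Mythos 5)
Your proposal follows essentially the same route as the paper: decompose by Atkin--Lehner eigencharacters, bound $\cC(\fn)_\ell^\beps$ from below by an explicit cuspidal divisor whose order is computed via the discriminant function, bound $\cT(\fn)_\ell^\beps$ from above by specializing into the component group at $\infty$ and identifying the image inside the module of Eisenstein (pseudo-)harmonic cochains, and close the sandwich by matching orders. The points you flag as the essential remaining work --- the multiplicity-one statement $\ctE(\fn,R)^\beps=R\cdot E^\beps$, the vanishing at $\beps=\bone$, and the careful tracking of the factor $q+1$ when $\ell\mid(q+1)$ (especially at $\beps=\beps_H$) --- are exactly where the paper's technical effort is concentrated, so the outline is faithful to the actual argument.
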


The method that we use falls short of proving $\cT(\fn)_\ell=\cC(\fn)_\ell$ for primes $\ell$ dividing $(q-1)$; these primes 
are the analogues of $\ell=2$ over $\Q$.  The prime $\ell=p$, being the characteristic of $F$, 
has peculiar properties, as far as Conjecture \ref{conjGOC} is concerned. On the one hand, it is not hard to 
prove the following:
\begin{prop}
Assume $\fn\lhd A$ is square-free and there is a prime divisor $\fp\mid \fn$ such that $\deg(\fn/\fp)\leq 2$. Then $\cT(\fn)_p=0$. 
\end{prop}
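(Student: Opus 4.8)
The plan is to use the hypothesis $\deg(\fn/\fp)\le 2$ to force \emph{purely toric} reduction of $J_0(\fn)$ at $\fp$, and then to exploit the fact that in characteristic $p$ a torus carries no nonzero $p$-power torsion. First I would invoke the genus formula: the curve $X_0(\fm)$ has genus $0$ for every square-free $\fm$ of degree at most $2$, so in particular $X_0(\fn/\fp)$ has genus $0$. Inserting this into the Deligne--Rapoport-type description of the special fibre of $X_0(\fn)$ at $\fp$ --- two copies of $X_0(\fn/\fp)$ over $\overline{\F}_\fp$ meeting transversally at the supersingular points --- shows that both components are rational. Hence the connected component $\cJ^0$ of the N\'eron model $\cJ$ of $J_0(\fn)$ over $\cO_{F_\fp}$ is a torus $T$, with trivial abelian part; equivalently, $X_0(\fn)$ is a Mumford curve at $\fp$ and $J_0(\fn)$ is totally degenerate there.

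Next I would run the reduction exact sequence over the completion $F_\fp$, whose residue field $\F_\fp$ has characteristic $p$:
$$0\longrightarrow J^1 \longrightarrow J_0(\fn)(F_\fp)\longrightarrow \cJ(\F_\fp)\longrightarrow 0,$$
where $J^1$, the kernel of reduction, is the group of $\fm_\fp$-points of the formal group of $\cJ^0$. The key point is that neither $J^1$ nor $\cJ^0(\F_\fp)=T(\F_\fp)$ has nonzero $p$-power torsion: the formal group of $T$ is a product of copies of $\widehat{\mathbb{G}}_m$, and for a $1$-unit $u$ the equation $u^{p^r}=1$ forces $(u-1)^{p^r}=0$, i.e. $u=1$; likewise $T(\overline{\F}_\fp)\subseteq(\overline{\F}_\fp^{\,\times})^{\dim T}$ is $p$-torsion-free. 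Combining this with the component-group sequence $0\to T(\F_\fp)\to \cJ(\F_\fp)\to \Phi_\fp\to 0$ produces an injection
$$\cT(\fn)_p \;\subseteq\; J_0(\fn)(F_\fp)[p^\infty]\;\hookrightarrow\; \Phi_\fp[p^\infty],$$
since $J_0(\fn)(F)$ embeds into $J_0(\fn)(F_\fp)$.

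It then remains to verify that the component group $\Phi_\fp$ has order prime to $p$, and this is where I expect the real work to lie. In the totally degenerate case $\Phi_\fp$ is isomorphic to the critical (sandpile) group of the dual graph of the special fibre, so its order is a weighted spanning-tree count determined by the number of supersingular points and their thicknesses. These invariants are built from quantities of the form $q^{d}+1$ or $q^{d}-1$ --- indeed they should match the Eisenstein numbers $N(\beps)=\prod_i(1+\eps_i|\fp_i|)$ of Theorem~\ref{thmMainT}, each factor of which satisfies $1+\eps_i|\fp_i|\equiv 1\pmod{p}$ because $q\equiv 0\pmod{p}$. Granting the explicit computation of $\Phi_\fp$, one concludes $p\nmid|\Phi_\fp|$, whence $\Phi_\fp[p^\infty]=0$ and therefore $\cT(\fn)_p=0$. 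The single genuinely characteristic-$p$ input is the vanishing of $p$-torsion on tori and their formal groups; the remaining obstacle is purely the bookkeeping needed to pin down $\Phi_\fp$ and confirm that its order avoids $p$.
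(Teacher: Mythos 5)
Your proposal is correct and follows essentially the same route as the paper: purely toric reduction of $J_0(\fn)$ at $\fp$ (forced by the genus-$0$ components of the special fibre), the resulting injection $\cT(\fn)_p\hookrightarrow\Phi_\fp$ because neither the formal group of the torus nor its points over a field of characteristic $p$ carry $p$-torsion (the paper cites \cite[Lem.~7.13]{Pal} for this), and finally $p\nmid\#\Phi_\fp$. The one step you explicitly defer --- pinning down $\Phi_\fp$ --- is precisely what the paper imports from \cite[Thm.~5.3]{PW1}, and your observation that its order is built from factors congruent to $\pm 1$ modulo $p$ is the correct reason that citation closes the argument.
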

\begin{proof}
If $\deg(\fn/\fp)\leq 2$, then $J_0(\fn)$ has purely toric reduction at $\fp$. 
Moreover, the component group $\Phi_\fp$ of the N\'eron model of $J_0(\fn)$ at $\fp$ 
has order coprime to $p$; see \cite[Thm. 5.3]{PW1}. On the other hand, 
because the reduction is purely toric, $\cT(\fn)_p$ injects into $\Phi_\fp$; cf. \cite[Lem. 7.13]{Pal}.
Hence $\cT(\fn)_p=0$. 
\end{proof}

On the other hand, when there is no prime dividing $\fn$ at which $J_0(\fn)$ has purely toric reduction, 
it is not clear to us how to analyse $\cT(\fn)_p$. More precisely, it not clear whether the theory of the 
Eisenstein ideal is applicable at all to the study of $\cT(\fn)_p$. (When $\ell\neq p$, the Eichler-Shimura 
congruence relation implies that $T_\fp-(|\fp|+1)$, $\fp\nmid \fn$, annihilates $\cT(\fn)_\ell$, 
hence one can use the Eisenstein ideal to study $\cT(\fn)_\ell$.) In any case, we prove the following (see Proposition \ref{propCp=0}): 

\begin{prop}
If $\fn$ is square-free, then $\cC(\fn)_p=0$. Therefore, if Conjecture \ref{conjGOC} is valid, then $\cT(\fn)_p=0$. 
\end{prop}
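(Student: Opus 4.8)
The second assertion is immediate from the first: if Conjecture \ref{conjGOC} holds then $\cC(\fn)=\cT(\fn)$, hence $\cT(\fn)_p=\cC(\fn)_p$. So the whole content is the claim $\cC(\fn)_p=0$, which, as $\cC(\fn)$ is finite, amounts to showing that $|\cC(\fn)|$ is prime to $p$. The plan is to realize $\cC(\fn)$ as an explicit cokernel and to control the relevant index modulo $p$. Let $\Div^0_{\mathrm{cusp}}$ denote the group of degree-zero divisors on $X_0(\fn)$ supported on the cusps; since $\fn$ is square-free the cusps are $F$-rational and indexed by the divisors $\fd\mid\fn$, so $\Div^0_{\mathrm{cusp}}\cong\Z^{2^s-1}$. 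A degree-zero cuspidal divisor is principal precisely when it is the divisor of a \emph{modular unit}, i.e.\ of a rational function on $X_0(\fn)$ that is invertible on $Y_0(\fn)$; writing $P$ for the group of such divisors we get $\cC(\fn)=\Div^0_{\mathrm{cusp}}/P$ and $|\cC(\fn)|=[\Div^0_{\mathrm{cusp}}:P]$. It therefore suffices to produce inside $P$ a full-rank sublattice $L$ with $[\Div^0_{\mathrm{cusp}}:L]$ prime to $p$, for then $[\Div^0_{\mathrm{cusp}}:P]$ divides this index.

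First I would manufacture such units from the Drinfeld discriminant. Let $\Delta$ be Gekeler's discriminant function on $\Omega$, and for $\fd\mid\fn$ set $u_\fd(z)=\Delta(\fd z)/\Delta(z)$. The weights cancel, so each $u_\fd$ descends to a modular function on $X_0(\fn)$ whose divisor is supported on the cusps, giving $2^s-1$ nontrivial elements of $P$. The orders of vanishing of $u_\fd$ at the cusps are given by Gekeler's explicit formulas (\cite{GekelerIJM}), and the key arithmetic feature is their shape: they are integers assembled from quantities of the form $|\fp_i|\pm 1=q^{\deg(\fp_i)}\pm 1$ and geometric sums $(q^{k}-1)/(q-1)=1+q+\cdots+q^{k-1}$. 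Since $q\equiv 0\pmod p$, every such quantity reduces to $\pm 1$ modulo $p$, and in particular is itself prime to $p$.

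It then remains to record the orders of vanishing of a suitable subfamily of the $u_\fd$ at the cusps as a square integer matrix $M$ of size $(2^s-1)\times(2^s-1)$, and to show $\det M\not\equiv 0\pmod p$: this gives the sublattice $L$ of index $|\det M|$ prime to $p$ and finishes the proof. Reducing $M$ modulo $p$ and substituting $q\equiv 0$ collapses every entry to $\pm1$ or $0$, and the expectation is that the resulting matrix over $\F_p$ is (after reordering the cusps and the units) triangular with units on the diagonal, hence invertible. The main obstacle is exactly this last combinatorial step: one must pin down Gekeler's divisor formulas precisely enough to identify the reduction $M\bmod p$, and verify both that the chosen subfamily already has full rank $2^s-1$ and that no unexpected $p$-divisibility arises when the $\Delta$-quotients are combined across the $2^s$ cusps. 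Once the reduction $q\equiv 0\pmod p$ has flattened all the factors $q^{k}\pm1$ to $\pm1$, the non-degeneracy of $M$ modulo $p$ should be a matter of the cusp-divisor incidence rather than of any delicate cancellation.
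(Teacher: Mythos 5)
Your route is genuinely different from the paper's. The paper proves the stronger statement that the \emph{exponent} of $\cC(\fn)$ divides $\rho(\fn)=\prod_{i=1}^s(|\fp_i|-1)(|\fp_i|+1)^{s-1}$, by induction on the number $s$ of prime factors: the base case is Gekeler's computation for prime level, and the inductive step plays the two degeneracy pullbacks $\pi_\fp^*$ and $(\pi_\fp\circ W_\fp)^*$ from level $\fn/\fp$ against each other, together with the single principal divisor $\mathrm{div}(\Delta/\Delta_\fp)$, to kill $\rho(\fn)\cdot([\fm_1]-[\fm_2])$ for all pairs of cusps. You instead bound the \emph{order} of $\cC(\fn)$ by exhibiting a full-rank sublattice of principal cuspidal divisors, spanned by the $\mathrm{div}(\Delta_\fd/\Delta)$, and arguing that its index is prime to $p$ via a determinant. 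That is a legitimate and in some ways more direct strategy (it needs no induction and no reference to the prime-level case), at the cost of requiring the full matrix of vanishing orders rather than a few linear relations.

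The one place where your write-up falls short of a proof is exactly the step you flag as an ``expectation'': you do not actually verify that $\det M\not\equiv 0\pmod p$. Fortunately this closes immediately from the formula the paper itself quotes from Gekeler, namely $\ord_{[\fm]}\Delta_\fd=|\fn|\,|(\fm,\fd)|^2/(|\fm|\,|\fd|)$. This quantity is a power of $q$, hence is $\equiv 1\pmod p$ precisely when $\fn(\fm,\fd)^2=\fm\fd$, i.e.\ when $\fd=\fn/\fm$, and is $\equiv 0$ otherwise. Consequently $\mathrm{div}(\Delta_\fd/\Delta)\equiv[\fn/\fd]-[\fn]\pmod p$, so in the basis $\{[\fm']-[\fn]:\fm'\mid\fn,\ \fm'\neq\fn\}$ of the degree-zero cuspidal divisors the matrix $M$ attached to $\{\Delta_\fd/\Delta:\fd\mid\fn,\ \fd\neq 1\}$ reduces mod $p$ to a permutation matrix --- not merely triangular, as you guessed, but literally a bijection between units and cusps. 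So your ``main obstacle'' is a two-line computation, and with it supplied your argument is complete; without it, the proposal as written only asserts the key non-degeneracy rather than proving it.
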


\begin{rem} We proved in \cite{PW2} that when $\fn$ is not square-free, and is not equal to a square of a prime, $\cT(\fn)_p\neq 0$. 
Note that in \cite{PW2} we constructed explicit elements in $\cT(\fn)_p$ using cuspidal divisors, which 
can be shown to be annihilated by the Eisenstein ideal. 
\end{rem}

Now we give an outline of the proof of Theorem \ref{thmMainT}. The Atkin-Lehner 
involutions form a group $\W\cong (\Z/2\Z)^s$ which acts on $\cT(\fn)$ and $\cC(\fn)$. 
Away from the $2$-primary components, one can decompose $\cT(\fn)$ and $\cC(\fn)$ into direct sums of $\W$-eigenspaces, 
each eigenspace corresponding to some $\beps\in \E$. Hence, it is enough to show that 
$\cT(\fn)_\ell^\beps=\cC(\fn)_\ell^\beps\cong \Z_\ell/N(\beps)\Z_\ell$ for all $\ell\nmid q(q-1)$ and $\beps=(\eps_1, \dots, \eps_s)\in \E$, 
where $\cT(\fn)_\ell^\beps$ denotes the largest direct summand of $\cT(\fn)_\ell$ on which $W_{\fp_i}$ 
acts by $\eps_i$, $1\leq i\leq s$, and similarly for $\cC(\fn)_\ell^\beps$. 
Since $J_0(\fn)$ has split toric reduction at $\infty$, the $\ell$-primary subgroup $\cT(\fn)_\ell$, $\ell\nmid q(q-1)$, 
maps injectively into the component group $\Phi_\infty$ of the N\'eron model of $J_0(\fn)$ at $\infty$. 
Then, using the rigid-analytic uniformization of $J_0(\fn)$ at $\infty$ and the Eichler-Shimura relations, one 
shows that the image of $\cT(\fn)_\ell^\beps$ in $\Phi_\infty$ can be identified with a subspace of $\cE_0(\fn, \Z/\ell^n\Z)^\beps$ for any sufficiently large $n$. 
Here  $\cE_0(\fn, \Z/\ell^n\Z)^\beps$ denotes the module of $\G_0(\fn)$-invariant $\Z/\ell^n\Z$-valued 
cuspidal harmonic cochains on which $T_\fp$ ($\fp\nmid \fn$) acts by multiplication by $|\fp|+1$, and $\W$ acts by $\beps$. 
We study the space $\cE_0(\fn, \Z/\ell^n\Z)^\beps$ in Section \ref{sEPHC}, where we show that it is 
generated by the reduction modulo $\ell^n$ of certain Eisenstein series with constant Fourier coefficient $qN(\beps)$. 
We then use this to identify $\cT(\fn)_\ell^\beps$ with a subgroup of $\Z_\ell/N(\beps)\Z_\ell$. 
On the other hand, in Section \ref{sCDG}, we construct an explicit element in $\cC(\fn)_\ell^\beps$, and 
use it to show that $\cC(\fn)_\ell^\beps$ contains a subgroup isomorphic to $\Z_\ell/N(\beps)\Z_\ell$. 
Now Theorem \ref{thmMainT} immediately follows by comparing the orders of $\cC(\fn)_\ell^\beps$ and $\cT(\fn)_\ell^\beps$. 
We note that the crucial idea of using the Atkin-Lehner involutions instead of $U_\fp$ operators to 
analyse the Eisenstein harmonic cochains was inspired by Ohta's paper \cite{Ohta}. Also, the trick 
with mapping $\cT(\fn)_\ell$ into $\Phi_\infty$ was first used by P\'al in \cite{Pal} in the prime level case.  

\begin{rem}
In \cite{PW1}, we proved a result toward Conjecture \ref{conjGOC} when $\fn$ is a product 
of two distinct primes. In that paper, which was mostly written in 2012, we used $U_\fp$ 
operators instead of Atkin-Lehner involutions to analyse $\cE_0(\fn, \Z/\ell^n\Z)$. That approach 
is technically more complicated and leads to a weaker result than Theorem \ref{thmMainT} specialized to $s=2$.  
On the other hand, in \cite{PW1}, we determined the structure of $\cC(\fp_1\fp_2)_\ell$ for all $\ell$, 
including $\ell\mid (q-1)$. 
\end{rem}

The Eisenstein ideal does not explicitly appear in this paper, although it is in the background of the analysis of  
$\cE_0(\fn, \Z/\ell^n\Z)^\beps$. 
The most technical part of the paper is the analysis of $\cE_0(\fn, \Z/\ell^n\Z)^{\beps_H}$ for $\ell \mid (q+1)$, 
which occupies a large portion of Section \ref{sEPHC}. 
The odd primes dividing $q+1$ are somewhat similar to $\ell=3$ in Ohta's setting; for example, there 
is a non-trivial $\GL_2(A)$-invariant $\Z/(q+1)\Z$-valued harmonic cochain, 
although there are no such cochains with values in rings where $q+1$ is invertible. 

For our purposes we found it convenient 
to generalize the notion of harmonic cochain. In Section \ref{sAL}, we introduce what we call \textit{pseudo-harmonic cochains}, 
which are functions on the edges of the Bruhat-Tits tree of $\PGL_2(\Fi)$ satisfying the flow condition 
of harmonic cochains but which are not necessarily alternating. It turns out that there is a $\GL_2(A)$-invariant $\Z$-valued 
pseudo-harmonic cochain $\tE$, which in our setting plays a role of the classical Eisenstein series $E_2(z)=\sum_{c,d\in \Z}'(cz+d)^{-2}$.
(Another function field analogue of $E_2$ was introduced by Gekeler in \cite{Improper} under the name of \textit{improper 
Eisenstein series}; we explain the relationship between our $\tE$ and Gekeler's Eisenstein series in Remark \ref{remH}.) 

\subsection*{Acknowledgements} This work was carried out while the first 
 author was visiting the Taida Institute for Mathematical Sciences in Taipei. 
 He thanks Professor Jing Yu for the invitation. He also thanks the institute 
for its hospitality and good working conditions.


\section{Pseudo-harmonic cochains and Hecke operators}\label{sAL} 

\subsection{Notation} 
Besides $\infty$, the other places of $F$ are in bijection with the non-zero prime ideals of $A$.  
Given a place $v$ of $F$, we denote by $F_v$ the completion of $F$ at $v$, by $\cO_v$ 
the ring of integers of $F_v$, and by $\F_v$ the residue field of $\cO_v$. 
We fix $\pi_\infty:=T^{-1}$ as a uniformizer of $\cO_\infty$. 

Let $R$ be a commutative ring with identity. We denote by $R^\times$ the group of 
multiplicative units of $R$. Let 
$\GL_n(R)$ be the group of $n\times n$ matrices over $R$ whose determinant is in $R^\times$, and $Z(R)\cong R^\times$ 
the subgroup of  $\GL_n(R)$ consisting of scalar matrices. 


Given an abelian group $H$ and an integer $n$, $H[n]$ is the kernel of multiplication by 
$n$ in $G$. For a prime number $\ell$, $H_\ell$ denotes the $\ell$-primary component of $H$. 

Given an ideal $\fn\lhd A$, by abuse of notation, we denote by the same symbol the unique 
monic polynomial in $A$ generating $\fn$. It will always be clear from the context 
in which capacity $\fn$ is used; for example, if $\fn$ appears in a matrix, column vector, or a 
polynomial equation, then the monic polynomial is implied. 
The prime ideals $\fp\lhd A$ are always assumed to be non-zero. Given two ideals $\fn, \fm$ of $A$,  $(\fn, \fm)$ 
stands for the greatest common divisor of $\fn$ and $\fm$, and  
$\fm\parallel \fn$ means that $\fm$ divides $\fn$ and $(\fm, \fn/\fm)=1$. 

\subsection{Pseudo-harmonic cochains} \label{sec2.1}

Let $G$ be an oriented connected graph in the sense of Definition 1 of $\S$2.1 in \cite{SerreT}. 
We denote by $V(G)$ and $E(G)$ its set of vertices and edges, respectively. 
For an edge $e\in E(G)$, let $o(e)$, $t(e)\in V(G)$ and $\bar{e}\in E(G)$ be its 
origin, terminus and inversely oriented edge, respectively. 

\begin{defn}\label{defnHarmG}
Let $R$ be a commutative ring with identity. An $R$-valued \textit{pseudo-harmonic cochain} on $G$ 
is a function $f: E(G)\to R$ that satisfies 
\begin{equation}\label{eq-pharm}
\sum_{\substack{e'\in E(G)\\ t(e')=o(e),\ e'\neq \bar{e}}} f(e')=f(e)\quad \text{for all $e\in E(G)$}.
\end{equation}
A pseudo-harmonic cochain is called \textit{harmonic} if it is alternating:  
\begin{equation}\label{eq-alt}
f(e)+f(\bar{e})=0\quad \text{for all $e\in E(G)$}.
\end{equation}
Note that (\ref{eq-alt}) makes (\ref{eq-pharm}) equivalent to the following 
$$
\sum_{\substack{e\in E(G)\\ t(e)=v}} f(e)=0\quad \text{for all $v\in V(G)$}.
$$
Denote by $\ctH(G, R)$ (resp. $\cH(G, R)$) the $R$-module of pseudo-harmonic (resp. harmonic) cochains on $G$. 
\end{defn}

\begin{lem}\label{lem 2.1FT} Assume $G$ is connected. Given $f\in \ctH(G, R)$, there is a constant $c\in R$ such that 
$$f(e) + f(\bar{e}) = c, \quad \forall e \in E(G).$$
\end{lem}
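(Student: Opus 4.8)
The plan is to show that the quantity $f(e) + f(\bar e)$ depends only on the vertex $o(e)$, and that the resulting function on vertices is forced to be constant by the connectedness of $G$.

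First I would introduce, for each vertex $v \in V(G)$, the auxiliary quantity
$$
\phi(v) := \sum_{t(e') = v} f(e'),
$$
the sum being taken over all edges terminating at $v$; this is a finite sum, since for a fixed edge $e$ with $o(e)=v$ it is exactly the sum over $e'\neq\bar e$ appearing in (\ref{eq-pharm}) together with the single extra term $f(\bar e)$. The key observation is that (\ref{eq-pharm}) can be rephrased in terms of $\phi$. Indeed, fixing $e\in E(G)$ and setting $v=o(e)$, the edge $\bar e$ satisfies $t(\bar e)=o(e)=v$, so separating it off from the sum defining $\phi(v)$ gives
$$
\phi(v) = \sum_{\substack{t(e') = v \\ e' \neq \bar e}} f(e') + f(\bar e) = f(e) + f(\bar e),
$$
the last equality being precisely the pseudo-harmonicity relation (\ref{eq-pharm}). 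Thus $f(e) + f(\bar e) = \phi(o(e))$ for every $e \in E(G)$.

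Next I would exploit the symmetry $e \leftrightarrow \bar e$. Replacing $e$ by $\bar e$ in the identity just obtained and using $\bar{\bar e} = e$ yields $f(\bar e) + f(e) = \phi(o(\bar e)) = \phi(t(e))$. Comparing the two expressions for $f(e)+f(\bar e)$ gives $\phi(o(e)) = \phi(t(e))$ for every $e$; that is, $\phi$ takes equal values at the two endpoints of each edge. Since $G$ is connected, any two vertices are joined by a path of edges along which the value of $\phi$ never changes, so $\phi$ is a constant $c \in R$. Then $f(e) + f(\bar e) = \phi(o(e)) = c$ for all $e$, as required.

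There is no serious obstacle here: the whole argument is formal once the correct auxiliary function $\phi(v)$ — the total incoming flow at $v$ — is identified, the conceptual point being that (\ref{eq-pharm}) secretly asserts that $f(e)+f(\bar e)$ equals this total incoming flow at $o(e)$. The only matter requiring any care is the well-definedness of $\phi(v)$ as an element of $R$, which is automatic because each such sum is a finite sum obtained from (\ref{eq-pharm}) by adjoining one term.
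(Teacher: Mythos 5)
Your proof is correct and is essentially the same argument as the paper's: the paper directly compares $f(e_1)+f(\bar e_1)$ and $f(e_2)+f(\bar e_2)$ for edges meeting at a common vertex by rearranging the relation (\ref{eq-pharm}) at that vertex, which is exactly your identity $f(e)+f(\bar e)=\phi(o(e))=\phi(t(e))$ in a slightly different packaging. Your formulation via the total incoming flow $\phi(v)$ is a clean way to organize the same computation, and the connectedness step is used identically in both.
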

\begin{proof}
Since $G$ is connected, it suffices to show that for every $e_1,e_2 \in E(G)$ with $t(e_1) = o(e_2)$ and $e_1 \neq \bar{e}_2$,
$$f(e_1)+f(\bar{e}_1) = f(e_2) + f(\bar{e}_2).$$
By definition 
$$f(e_2) = f(e_1) + \sum_{\substack{e' \in E(G) \\ t(e') = o(e_2),\ e' \neq \bar{e}_2, e_1}} f(e')$$
and
$$
f(\bar{e}_1) = f(\bar{e}_2) + \sum_{\substack{e' \in E(G)\\ t(e') = o(\bar{e}_1),\ e' \neq e_1, \bar{e}_2}} f(e')
= f(\bar{e}_2)+ \sum_{\substack{e' \in E(G)\\ t(e') = o(e_2),\ e' \neq  \bar{e}_2, e_1}} f(e').
$$
Therefore, $f(e_2) - f(e_1) = f(\bar{e}_1) - f(\bar{e}_2)$, and the result follows. 
\end{proof}

\begin{rem}
The pseudo-harmonic cochains on the Bruhat-Tits tree $\sT$ are a special case of metaplectic forms over function fields 
whose general theory is developed in \cite{WeiMA}. In the terminology of \cite{WeiMA},  pseudo-harmonic cochains 
are the ``weight-2'' forms. 
\end{rem}

The graphs that we consider in this paper are the Bruhat-Tits tree $\sT$ of $\PGL_2(\Fi)$ and the 
quotients of $\sT$. We recall the definition and introduce some notation for later use. 
Fix a uniformizer $\pi_\infty$ of $\Fi$. 
The sets of vertices $V(\sT)$ and edges $E(\sT)$ are the cosets $\GL_2(\Fi)/Z(\Fi)\GL_2(\cO_\infty)$ 
and $\GL_2(\Fi)/Z(\Fi)\cI_\infty$, respectively, where $\cI_\infty$ is the Iwahori group:
$$
\cI_\infty=\left\{\begin{pmatrix} a & b\\ c & d\end{pmatrix}\in \GL_2(\cO_\infty)\ \bigg|\ c\in \pi_\infty\cO_\infty\right\}. 
$$
The matrix $\begin{pmatrix} 0 & 1\\ \pi_\infty & 0\end{pmatrix}$ 
normalizes $\cI_\infty$, so the multiplication from the right by this matrix on $\GL_2(\Fi)$ 
induces an involution on $E(\sT)$; this involution is $e\mapsto \bar{e}$. 
The matrices 
\begin{equation}\label{eq-setM}
E(\sT)^+=\left\{\begin{pmatrix} \pi_\infty^k & u \\ 0 & 1\end{pmatrix}\ \bigg|\
\begin{matrix} k\in \Z\\ u\in \Fi,\ u\ \mod\ \pi_\infty^k\cO_\infty\end{matrix}\right\}
\end{equation}
are in distinct left cosets of $\cI_\infty Z(\Fi)$, and there is a disjoint decomposition  (cf.\ \cite[(1.6)]{Improper})
\begin{equation}\label{eq-edge+}
E(\sT)=E(\sT)^+\bigsqcup E(\sT)^+\begin{pmatrix} 0 & 1\\ \pi_\infty & 0\end{pmatrix}. 
\end{equation}
We call the edges in $E(\sT)^+$ \textit{positively oriented}.

The group $\GL_2(\Fi)$ naturally acts on $E(\sT)$ by left multiplication. 
This induces an action on the group of $R$-valued functions on $E(\sT)$: 
for a function $f$ on $E(\sT)$ and $\gamma\in \GL_2(\Fi)$ we define the function $f|\gamma$ on $E(\sT)$ by 
$(f|\gamma)(e)=f(\gamma e)$. 
It is clear from the definition that $f|\gamma$ is pseudo-harmonic (resp. harmonic) if $f$ is pseudo-harmonic (resp. harmonic). 

\begin{defn} Let $\G$ be a subgroup of $\GL_2(\Fi)$. 
Denote by $\ctH(\sT, R)^\G$ (resp. $\cH(\sT, R)^\G$) the $R$-submodule of $\G$-invariant pseudo-harmonic (resp. harmonic) cochains, 
i.e., $f|\gamma=f$ for all $\gamma\in \G$.  
The module of $R$-valued \textit{cuspidal} 
harmonic cochains for $\G$, denoted $\cH_0(\sT, R)^\G$, is the 
submodule of $\cH(\sT, R)^\G$ consisting of functions 
which have compact support modulo $\G$. 
Let $\cH_{00}(\sT, R)^\G$ 
denote the image of $\cH_0(\sT, \Z)^\G\otimes R$ in $\cH_0(\sT, R)^\G$. To simplify the notation, 
we denote the $R$-module of pseudo-harmonic (resp. harmonic, cuspidal) $\G_0(\fn)$-invariant cochains by 
$$
\ctH(\fn, R)\supset \cH(\fn, R)\supset \cH_0(\fn, R)\supset \cH_{00}(\fn, R). 
$$ 
In general, all these inclusions are strict; in particular $\cH_0(\fn, R)\neq \cH_{00}(\fn, R)$, although 
one can show that $\cH_0(\fn, R)= \cH_{00}(\fn, R)$ if $R$ is flat over $\Z$ or $q(q^2-1)\in R^\times$; see \cite[$\S$2.1.]{PW1}.
(We could have also defined cuspidal pseudo-harmonic $\G_0(\fn)$-invariant cochains, but such cochains are necessarily harmonic 
by Lemma \ref{lem 2.1FT}, i.e., $\ctH_0(\fn, R)=\cH_0(\fn, R)$.)
\end{defn}

 It is known that the quotient graph $\G_0(\fn)\bs \sT$ is the edge disjoint union 
$$
\G_0(\fn)\bs \sT = (\G_0(\fn)\bs \sT)^0\cup \bigcup_{s\in \G_0(\fn)\bs \p^1(F)} h_s
$$
of a finite graph $(\G_0(\fn)\bs \sT)^0$ with a finite number of half-lines $h_s$, called \textit{cusps}; 
cf. Theorem 2 on page 106 of \cite{SerreT}. (A half-line is a graph as in Figure \ref{Fig1}.)
The cusps are in bijection with the orbits of the natural action of $\G_0(\fn)$ on $\p^1(F)$; cf. Remark 2 
on page 110 of \cite{SerreT}. It is clear that $f\in \cH(\fn, R)$ is cuspidal if and only if it eventually vanishes on each $h_s$. 

\begin{example}\label{example1} 
\begin{figure}
\begin{tikzpicture}[->, >=stealth', semithick, node distance=1.5cm, inner sep=.5mm, vertex/.style={circle, fill=black}]

\node[vertex] (0) [label=below:$v_0$]{};
  \node[vertex] (1) [right of=0, label=below:$v_1$] {}; 
  \node[vertex] (2) [right of=1, label=below:$v_2$] {};
  \node[vertex] (3) [right of=2, label=below:$v_3$] {};
  \node[] (4) [right of=3] {};

\path[]
    (0) edge  (1) (1) edge (2) (2) edge (3) (3) edge[dashed] (4);   
\end{tikzpicture}
\caption{$\GL_2(A)\bs \sT$}\label{Fig1}
\end{figure}
The quotient graph $\GL_2(A)\bs \sT$ is a half-line depicted in Figure \ref{Fig1}, 
where the vertex $v_i$ ($i\geq 0$) 
is the image of $\begin{pmatrix} \pi_\infty^{-i} & 0\\ 0 & 1\end{pmatrix}\in V(\sT)$.  
Denote the edge with origin $v_i$ and terminus $v_{i+1}$ by $e_i$. It is clear that $f\in \ctH(1, R)$ defines a 
function on $\GL_2(A)\bs \sT$, and $f$ itself can be uniquely recover from that function. Hence we consider $f\in \ctH(1, R)$ 
as a function on the quotient graph. The stabilizers 
of vertices and edges of $\GL_2(A)\bs \sT$ are well-known, cf. \cite[p. 691]{GN}. 
Using this one easily computes that (\ref{eq-pharm}) is equivalent to the following relations (cf. \cite[Example 2.4]{PW1}): 
\begin{align*}
 q\cdot f(\bar{e}_0)&=f(e_0), \\ q\cdot f(e_{i+1})&=f(e_i), \qquad \forall i\geq 0,
 \\ f(\bar{e}_{i+1})+(q-1)f(e_i)&=f(\bar{e}_i), \qquad \forall i\geq 0. 
\end{align*}
Therefore, if we put $f(\bar{e}_0)=\alpha$, then $f\in \ctH(1, R)$ if and only if for all $i\geq 0$
\begin{align*}
f(e_i) &=q^{i+1}\alpha,\\
f(\bar{e}_i) &=(1+q-q^{i+1})\alpha. 
\end{align*}
In particular, $f(e_i)+f(\bar{e}_i) = (q+1)\alpha$ for all $i\geq 0$. We conclude that $\ctH(1, R)\cong R$ 
is spanned by the function $\tE$ with $\tE(\bar{e}_0)=1$, $\cH(1, R)\cong R[q+1]$,   
and $\cH_0(1,R)=\cH_{00}(1,R)=0$. 
\end{example} 

\begin{rem}
When $(q+1)$ is invertible in $R$, it is easy to see from Lemma \ref{lem 2.1FT} and the previous example that  
$$
\ctH(\sT, R)=\cH(\sT, R)\oplus R \tE. 
$$
\end{rem}


\subsection{Fourier expansion} 

The theory of Fourier expansions of automorphic forms over function fields was developed by Weil in \cite{Weil}. 
As was observed by P\'al in \cite{Pal}, Weil's theory works over more general rings than $\C$. 
Here we follow Gekeler's reinterpretation \cite{Improper} of  
Weil's adelic approach as analysis on the Bruhat-Tits tree, but we will extend \cite{Improper} to the 
setting of these more general rings. 

\begin{defn}
Following \cite{Pal} we say that $R$ is a \textit{coefficient ring} if $p\in R^\times$
and $R$ is a quotient of a discrete valuation ring $\tilde{R}$ which contains $p$-th roots of unity. Note 
that the image of the $p$-th roots of unity of $\tilde{R}$ in $R$ is exactly the set of $p$-th roots 
of unity of $R$. For example, any algebraically closed field of characteristic different from $p$ 
is a coefficient ring. 
\end{defn}

In this subsection $R$ is assumed to be a coefficient ring. Let 
\begin{align*}
\eta: \Fi &\to R^\times \\ 
\sum a_i\pi_\infty^i &\mapsto \eta_0\Big(\text{Trace}_{\F_q/\F_p}(a_1)\Big)
\end{align*}
where $\eta_0: \F_p\to R^\times$ is a non-trivial additive character fixed once and for all.  

The group 
$$
\G_\infty:=\left\{\begin{pmatrix} a & b \\ 0 & d\end{pmatrix}\in \GL_2(A)\right\}  
$$
acts orientation preserving on $\sT$, so $E(\G_\infty\bs \sT)=\G_\infty\bs E(\sT)$, and the 
orientation $E(\sT)^+$ on $\sT$ induces an orientation $E(\G_\infty\bs \sT)^+$. 
For an $R$-valued $\G_\infty$-invariant function $f$ on $E(\sT)$,  
its \textit{constant Fourier coefficient} is the $R$-valued function $f^0$ on $\pi_\infty^\Z$ defined by 
$$
f^0(\pi_\infty^k)= \begin{cases}
q^{1-k} \sum_{\substack{u\in (\pi_\infty)/(\pi_\infty^k)}}f\left(\begin{pmatrix}\pi_\infty^k & u \\ 0 &1\end{pmatrix}\right) 
& \text{if $k\geq 1$}, \\ 
f\left(\begin{pmatrix}\pi_\infty^k & 0 \\ 0 &1\end{pmatrix}\right)& \text{if $k\leq 1$}.  
\end{cases}
$$
For a divisor $\fm$ on $F$, the \textit{$\fm$-th Fourier coefficient} $f^\ast(\fm)$ of $f$ is 
$$
f^\ast(\fm) = q^{-1-\deg(\fm)}\sum_{u\in (\pi_\infty)/(\pi_\infty^{2+\deg(\fm)})} 
f\left(\begin{pmatrix}\pi_\infty^{2+\deg(\fm)} & u \\ 0 &1\end{pmatrix}\right)\eta(-m u), 
$$ 
if $\fm$ is non-negative, and $f^\ast(\fm)=0$, otherwise; here $m\in A$ is the monic polynomial 
such that $\fm=\mathrm{div}(m)\cdot \infty^{\deg(\fm)}$. 
Then $f$ has a \textit{Fourier expansion}
\begin{equation}\label{eq-Fexp}
f \left(\begin{pmatrix} \pi_\infty^k & y \\ 0 &1\end{pmatrix}\right) = f^0(\pi_\infty^k)+ 
\sum_{\substack{0\neq m\in A \\ \deg(m)\leq k-2}} f^\ast(\mathrm{div}(m)\cdot \infty^{k-2})\cdot \eta(my).   
\end{equation}
We refer to \cite[$\S$2]{Pal} and \cite[$\S$2]{Improper} for the proofs. 

\begin{lem}\label{lemFH} Assume $f\in \ctH(\sT, R)^{\G_\infty}$.  Then 
\begin{itemize}
\item[(i)] 
$f^0(\pi_\infty^k)=f^0(1)\cdot q^{-k}$ for any $k\in \Z$; 
\item[(ii)] $
f^\ast(\fm \infty^k)=f^\ast(\fm)\cdot q^{-k}$ for any non-negative divisor $\fm$ and $k \in \Z_{\geq 0}$. 
\end{itemize}
In particular, when $f$ is harmonic, the Fourier coefficients $f^0(1)$ and $f^*(\fm)$ for $\fm \lhd A$ uniquely determine $f$.
\end{lem}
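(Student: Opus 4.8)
The plan is to prove parts (i) and (ii) by exploiting the pseudo-harmonic condition (\ref{eq-pharm}) directly, translating it into recursions on the Fourier coefficients via the explicit edge representatives in (\ref{eq-setM}). The key observation is that the flow condition relates the value of $f$ on an edge $\begin{pmatrix} \pi_\infty^k & y \\ 0 & 1\end{pmatrix}$ to the sum of its values on the $q$ edges terminating at its origin (excluding the reversal). Concretely, I would pick the standard edge $e_k=\begin{pmatrix} \pi_\infty^k & 0 \\ 0 & 1\end{pmatrix}$ and enumerate the edges $e'$ with $t(e')=o(e_k)$ and $e'\neq \bar{e}_k$; these are the $q$ edges of the form $\begin{pmatrix} \pi_\infty^{k-1} & u \\ 0 & 1\end{pmatrix}$ with $u$ ranging over $\cO_\infty/\pi_\infty^{k-1}\cO_\infty$ reduced appropriately. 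This gives a relation expressing sums of $f$-values at level $k-1$ in terms of a single $f$-value at level $k$.

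First I would establish (i). Averaging the flow relation over the $u$-coset that defines $f^0(\pi_\infty^k)$, and using the $\G_\infty$-invariance to collapse the sum over the upper-triangular unipotent translations, the off-diagonal Fourier terms cancel by orthogonality of the character $\eta$ (exactly as the constant term is isolated in (\ref{eq-Fexp})), leaving a clean recursion $f^0(\pi_\infty^k)=q^{-1}f^0(\pi_\infty^{k-1})$, or equivalently $f^0(\pi_\infty^k)=q\cdot f^0(\pi_\infty^{k-1})$ depending on orientation bookkeeping. Iterating this recursion from $k=0$ yields $f^0(\pi_\infty^k)=f^0(1)\cdot q^{-k}$. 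The main bookkeeping subtlety here is matching the two cases ($k\geq 1$ versus $k\leq 1$) in the definition of $f^0$ so that the recursion is uniform across all $k\in\Z$; I expect the normalization factor $q^{1-k}$ in the $k\geq 1$ branch to be precisely what makes the two regimes agree at the overlap $k=1$.

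For (ii), the strategy is parallel but now I track a fixed nonzero frequency. I would apply the flow relation (\ref{eq-pharm}) at the edge $\begin{pmatrix} \pi_\infty^{2+\deg(\fm)+1} & y \\ 0 & 1\end{pmatrix}$ and extract the $\fm\infty^{k}$-Fourier coefficient by multiplying by $\eta(-my)$ and summing over $y$, again invoking character orthogonality. Because raising $k$ by one enlarges the half-plane strip over which we average by a factor of $q$, while the flow relation contributes a single transfer between adjacent levels, the net effect is the same scaling $f^\ast(\fm\infty^k)=q^{-1}f^\ast(\fm\infty^{k-1})$, and iterating gives the stated formula. The final assertion, that $f^0(1)$ together with the $f^\ast(\fm)$ for $\fm\lhd A$ determine $f$ when $f$ is harmonic, then follows by feeding (i) and (ii) back into the Fourier expansion (\ref{eq-Fexp}): every coefficient appearing in (\ref{eq-Fexp}) is expressed in terms of these data, and harmonicity (\ref{eq-alt}) pins down the values on negatively oriented edges via $f(e)=-f(\bar{e})$, so no further freedom remains.

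The step I expect to be the main obstacle is the careful identification of the edge set $\{e' : t(e')=o(e),\ e'\neq\bar{e}\}$ in terms of the upper-triangular representatives (\ref{eq-setM}), together with the precise reduction of $u$ modulo the correct power of $\pi_\infty$. Getting the indexing exactly right is what controls the power of $q$ in the recursion, and a single off-by-one error in the level would produce the wrong exponent in $q^{-k}$. Once the combinatorics of the tree at a single vertex is pinned down and combined with the orthogonality relation $\sum_u \eta(mu)=0$ for $m\not\equiv 0$, both recursions should drop out with only routine computation.
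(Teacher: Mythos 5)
Your plan is essentially the paper's proof: the paper packages the flow condition for a $\G_\infty$-invariant $f$ as the single identity $f(g)=\sum_{u\in\F_q}f\bigl(g\left(\begin{smallmatrix}\pi_\infty&u\\0&1\end{smallmatrix}\right)\bigr)$ (an eigenvalue equation $f|T_\infty=f$ for a degree-$q$ operator at $\infty$) and then checks that $(f|T_\infty)^0(\pi_\infty^k)=q\,f^0(\pi_\infty^{k+1})$ and $(f|T_\infty)^\ast(\fm)=q\,f^\ast(\fm\infty)$, which is exactly your ``apply the flow relation and isolate coefficients by character orthogonality'' recursion, with the last assertion read off from (\ref{eq-Fexp}) and (\ref{eq-alt}) as you say. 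One concrete correction at the step you yourself flag as the danger point: under the paper's conventions the edges $e'$ with $t(e')=o(e)$ and $e'\neq\bar{e}$ for $e=\left(\begin{smallmatrix}\pi_\infty^k&y\\0&1\end{smallmatrix}\right)$ are the $q$ edges $e\left(\begin{smallmatrix}\pi_\infty&u\\0&1\end{smallmatrix}\right)=\left(\begin{smallmatrix}\pi_\infty^{k+1}&y+\pi_\infty^k u\\0&1\end{smallmatrix}\right)$ with $u\in\F_q$, i.e.\ they sit at level $k+1$, not $k-1$ as you wrote; carried out literally, your identification inverts the recursion and yields $f^0(\pi_\infty^k)=q^{k}f^0(1)$. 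With the neighbours at level $k+1$ the relation is $f^0(\pi_\infty^k)=q\,f^0(\pi_\infty^{k+1})$, which iterates to the stated $f^0(\pi_\infty^k)=q^{-k}f^0(1)$ (and similarly for (ii)), after which the rest of your argument goes through.
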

\begin{proof}
Note that the pseudo-harmonicity of $f$ says:
$$ 
f|T_\infty (g) := \sum_{u \in \F_q} f\left(g \begin{pmatrix} \pi_\infty &u\\ 0&1\end{pmatrix}\right) = f(g), \quad \forall g \in \GL_2(F_\infty).
$$
It is then straightforward to check that for any non-negative divisor $\fm$ and $k \in \Z$, 
$$(f|T_\infty)^0(\pi_\infty^k) = q f^0(\pi_\infty^{k+1}) \quad \text{ and } \quad
(f|T_\infty)^*(\fm) = q f^*(\fm \infty).$$
Hence we obtain (i) and (ii).
The last statement follows from the Fourier expansion of $f$.
\end{proof}

\begin{lem}\label{lem2.3FT}
Let $\fn \lhd A$ be a non-zero ideal. 
Then $f \in \widetilde{\cH}(\fn,R)$ is uniquely determined by its Fourier coefficients $f^0(1)$ and $f^*(\fm)$ for $\fm \lhd A$.
\end{lem}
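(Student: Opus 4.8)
The plan is to prove the equivalent injectivity statement: if $f\in\ctH(\fn,R)$ satisfies $f^0(1)=0$ and $f^*(\fm)=0$ for every $\fm\lhd A$, then $f\equiv 0$. First I would observe that $\G_\infty\subseteq\G_0(\fn)$, so $f$ lies in $\ctH(\sT,R)^{\G_\infty}$ and Lemma \ref{lemFH} applies. By parts (i) and (ii) of that lemma, the vanishing of $f^0(1)$ and of all $f^*(\fm)$ forces $f^0(\pi_\infty^k)=0$ and $f^*(\fm\infty^k)=0$ for every $k$; feeding this into the Fourier expansion \eqref{eq-Fexp} shows that $f\left(\begin{pmatrix}\pi_\infty^k & y\\ 0 & 1\end{pmatrix}\right)=0$ for all $k\in\Z$ and $y\in\Fi$. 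In other words, $f$ vanishes identically on the set $E(\sT)^+$ of positively oriented edges.

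Next I would invoke Lemma \ref{lem 2.1FT}: there is a single constant $c\in R$ with $f(e)+f(\bar e)=c$ for all $e$. Combined with the previous step this determines $f$ completely once $c$ is known, since $f\equiv 0$ on $E(\sT)^+$ forces $f(\bar e)=c$ for every $e\in E(\sT)^+$; that is, $f$ equals the constant $c$ on all negatively oriented edges. Thus the whole problem collapses to showing that this single constant vanishes.

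Pinning down $c$ is the crux, and the point where a naive approach stalls. If one only feeds the flow relation \eqref{eq-pharm} into the configuration ``$f\equiv 0$ on $E(\sT)^+$, $f\equiv c$ on $\overline{E(\sT)^+}$'', then at each vertex exactly one incoming edge is positively oriented (carrying value $0$) and the remaining $q-1$ edges occurring in the sum are negatively oriented (carrying value $c$), so every instance of \eqref{eq-pharm} reduces to the single identity $(q-1)c=0$. This is useless precisely when $q-1$ is not invertible in $R$, which is the delicate range of coefficients in this paper; pseudo-harmonicity by itself does not kill $c$. The extra input I would use is that the full group $\G_0(\fn)$ does not preserve the Gekeler orientation $E(\sT)^+$: this orientation is attached to the end $\infty$ of $\sT$, and $\G_0(\fn)$ does not fix that end---for instance $\begin{pmatrix}1 & 0\\ \fn & 1\end{pmatrix}\in\G_0(\fn)$ moves the boundary point $[1:0]$ to $[1:\fn]$. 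Consequently $\gamma E(\sT)^+\neq E(\sT)^+$ for some $\gamma\in\G_0(\fn)$, so by \eqref{eq-edge+} there is a positively oriented edge $e_0$ with $\gamma e_0$ negatively oriented; then $\G_0(\fn)$-invariance yields $c=f(\gamma e_0)=f(e_0)=0$, whence $f\equiv 0$.

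The main obstacle, then, is exactly the determination of the constant $c$: the harmonic/Fourier machinery of Lemma \ref{lemFH} controls $f$ only on positively oriented edges, and the genuinely new feature of pseudo-harmonic (non-alternating) cochains is this one additional degree of freedom. I expect the cleanest resolution to be the orientation argument above rather than any attempt to split off a harmonic part, since projecting onto harmonic cochains would require inverting $2$ or $q+1$, neither of which is available for the coefficient rings of interest.
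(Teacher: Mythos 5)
Your argument is correct, but it resolves the only nontrivial point --- recovering $f$ on the negatively oriented edges --- by a genuinely different route than the paper. The first step is the same in both: the vanishing of $f^0(1)$ and of all $f^*(\fm)$ forces, via Lemma \ref{lemFH} and the expansion (\ref{eq-Fexp}), that $f\equiv 0$ on $E(\sT)^+$. The paper then proves the purely group-theoretic statement that \emph{every} $e\in E(\sT)^-$ is $\G_0(\fn)$-equivalent to a positively oriented edge, exhibiting the required $\gamma$ explicitly by means of the approximation theorem; this uses nothing about $f$ beyond $\G_0(\fn)$-invariance. You instead exploit pseudo-harmonicity a second time through Lemma \ref{lem 2.1FT}, which collapses the remaining freedom to the single constant $c=f(e)+f(\bar e)$, so that only \emph{one} orientation-reversing pair $(\gamma,e_0)$ is needed to force $c=0$. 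Your route is shorter and avoids the approximation theorem; the paper's establishes the stronger (and independently meaningful) fact that $\G_0(\fn)\cdot E(\sT)^+=E(\sT)$. Two small points. First, the existence of $\gamma\in\G_0(\fn)$ and $e_0\in E(\sT)^+$ with $\gamma e_0\in E(\sT)^-$ deserves one more sentence: since left multiplication by $\gamma$ commutes with $e\mapsto\bar e$, preservation of orientation would mean $\gamma E(\sT)^+=E(\sT)^+$; and because each vertex is the origin of exactly one positively oriented edge, an orientation-preserving automorphism must fix the end $\infty$ of $\sT$, which $\begin{pmatrix}1&0\\ \fn&1\end{pmatrix}$ does not --- so your conclusion holds. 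Second, your parenthetical claim that feeding the flow condition into the configuration yields $(q-1)c=0$ depends on the orientation convention (with the paper's normalization, where each vertex receives $q$ positively oriented incoming edges, the flow condition gives no constraint on $c$ at all); this only strengthens your point that pseudo-harmonicity alone cannot kill $c$, and it plays no role in the actual argument.
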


\begin{proof}
The Fourier coefficients of $f$ determine the values of $f$ on positively oriented edges $e \in E(\sT)^+$. 
Therefore it suffices to show that for every edge $e \in E(\sT)^-$, there exists $\gamma \in \Gamma_0(\fn)$ such that $\gamma e \in E(\sT)^+$.
By (\ref{eq-edge+}), after identifying $E(\sT)$ with $\GL_2(F_\infty)/Z(F_\infty) \cI_\infty$, every edge $e \in E(\sT)^-$ is expressed by
$$\begin{pmatrix} \pi_\infty^r & u \\ 0 & 1\end{pmatrix}\begin{pmatrix} 0 & 1 \\ \pi_\infty & 0 \end{pmatrix}, \quad r \in \Z,\ u \in F_\infty.$$
By the approximation theorem, there exists $\alpha \in F^{\times}$ such that 
\begin{align*} &\ord_\infty(u-\alpha) \geq r \quad \text{ and }\\
&\ord_\fp\left(\fp^{-\ord_\fp(\fn)} - \alpha\right) \geq 0\ \text{ for all prime } \fp \mid \fn.
\end{align*}
Writing $\alpha = d/c$ with $c,d \in A$ and $(c,d) = 1$, the above inequalities imply that
$$c = c_1 \fn \text{ with $c_1 \in A$,} \quad \text{ and } \quad \ord_\infty \left(u-\frac{d}{c_1 \fn}\right) \geq r.$$
Take $a,b \in A$ with $ad - b c_1 \fn =1$. Then
$$\gamma:= \begin{pmatrix} a&b\\c_1 \fn &d \end{pmatrix} \in \Gamma_0(\fn) \quad \text{ and }\quad  
\ord_\infty \left(\frac{ c_1 \fn u + d}{ c_1 \fn \pi_\infty^r}\right) \geq 0.$$
Therefore $\gamma e \in E(\sT)^+$ as
$$\begin{pmatrix} a&b \\ c_1 \fn & d\end{pmatrix} \cdot \begin{pmatrix} \pi_\infty^r & u \\ 0 & 1\end{pmatrix}\begin{pmatrix} 0 & 1 \\ \pi_\infty & 0 \end{pmatrix} \cdot \begin{pmatrix} 1 & 0 \\ - \left(\frac{ c_1 \fn u + d}{c_1 \fn u \pi_\infty ^r}\right) \pi_\infty & 1\end{pmatrix}
= \begin{pmatrix} -\frac{\pi_\infty}{c_1 \fn} & u \pi_\infty^r \\ 0 & c_1 \fn \pi_\infty^r \end{pmatrix}.$$
\end{proof}


\subsection{Hecke operators and Atkin-Lehner involutions} 

Given a non-zero ideal $\fm\lhd A$, let 
\begin{equation}
B_\fm=\begin{pmatrix} \fm & 0 \\  0 & 1\end{pmatrix},
\end{equation}
which we consider as a linear operator on $R$-valued functions on $E(\sT)$. 
Note that the action of $B_\fm^{-1}$ on functions on $E(\sT)$ is the same as the action of the matrix 
$\begin{pmatrix} 1 & 0 \\  0 & \fm\end{pmatrix}$ (since the diagonal matrices act trivially). 
For $b\in A$, let $S_b:=\begin{pmatrix} 1 & b \\ 0 & 1\end{pmatrix}$. For a prime ideal $\fp\lhd A$, 
define the \textit{Hecke operators} $U_\fp$ and $T_\fp$ acting on the space of $R$-valued functions on $E(\sT)$ by 
\begin{align*}
f|U_\fp &=\sum_{\substack{b\in A\\ \deg(b)<\deg(\fp)}} f|B_\fp^{-1}S_b, \\ f|T_\fp &=f|U_\fp + f|B_\fp. 
\end{align*}

For an ideal $\fm \parallel \fn$, let $W_\fm$ be any matrix of the form 
\begin{equation}\label{ALmatrix}
\begin{pmatrix} a\fm & b \\ c\fn & d\fm \end{pmatrix} 
\end{equation}
such that $a,b,c,d, \in A$, and the ideal generated by $\det(W_\fm)$ in $A$ is $\fm$. 
It is not hard to check that for $f\in \ctH(\fn, R)$, $f|W_\fm$ does not depend on the choice 
of the matrix for $W_\fm$. Moreover, 
as $R$-linear endomorphisms of $\ctH(\fn, R)$, $W_\fm$'s satisfy 
\begin{equation}\label{eqWs}
W_{\fm_1}W_{\fm_2}=W_{\fm_3}, \quad \text{where} \quad \fm_3=
\frac{\fm_1\fm_2}{(\fm_1, \fm_2)^2}. 
\end{equation}
Therefore, the matrices $W_\fm$ acting on $\ctH(\fn, R)$ 
generate an abelian group $\W\cong (\Z/2\Z)^s$, called the group of \textit{Atkin-Lehner involutions}, 
where $s$ is the number of prime divisors of $\fn$. 

\begin{lem}\label{lem-TU} Given a non-zero ideal $\fn\lhd A$, 
the operators $T_\fp$ $(\fp\nmid \fn)$, $U_\fp$ $(\fp\mid\fn)$, and $W_\fm$ $(\fm\parallel \fn)$ 
preserve the $R$-modules $\ctH(\fn, R)$, $\cH(\fn, R)$, $\cH_0(\fn, R)$. Moreover, $T_\fp U_{\fp'}=U_{\fp'}T_\fp$, 
$T_\fp W_\fm=W_\fm T_\fp$, and if $\fp\nmid \fm$ then $U_\fp W_\fm=W_\fm U_\fp$.  
\end{lem}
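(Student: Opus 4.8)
The statement has two halves: that each of $T_\fp$, $U_\fp$, $W_\fm$ preserves the three modules, and that the three commutation relations hold. The plan is to dispatch the preservation claims first. Preservation of the (pseudo-)harmonicity condition is immediate: by definition each of these operators is an $R$-linear combination of translation operators $f\mapsto f|\delta$ with $\delta\in\GL_2(\Fi)$, and it was already observed that every such translation sends pseudo-harmonic cochains to pseudo-harmonic ones and harmonic to harmonic, since the defining relations (\ref{eq-pharm}) and (\ref{eq-alt}) are phrased purely in terms of the $\GL_2(\Fi)$-equivariant incidence data $o$, $t$, $\bar{(\cdot)}$ on $\sT$. Hence it remains only to check $\G_0(\fn)$-invariance and cuspidality.

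For $\G_0(\fn)$-invariance I would argue by the standard double-coset computation. For $U_\fp$ ($\fp\mid\fn$) and $T_\fp$ ($\fp\nmid\fn$), I fix $\gamma\in\G_0(\fn)$ and, for each representative $\delta$ in the sum defining the operator (namely $B_\fp^{-1}S_b$ with $\deg b<\deg\fp$, together with $B_\fp$ for $T_\fp$), produce a representative $\delta'$ from the same list and some $\gamma'\in\G_0(\fn)$ with $\delta\gamma=\gamma'\delta'$ modulo $Z(\Fi)$, arranged so that $\delta\mapsto\delta'$ permutes the list. Invariance of $f$ then gives $(f|U_\fp)(\gamma e)=(f|U_\fp)(e)$, and likewise for $T_\fp$. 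The only subtlety is that whether $\fp\mid\fn$ or $\fp\nmid\fn$ changes the congruence imposed on the lower-left entry of $\gamma'$, but this is compensated by the shape of the representatives. For $W_\fm$ I instead use that a matrix of the form (\ref{ALmatrix}) normalizes $\G_0(\fn)$ in $\PGL_2(F)$, so invariance of $f$ transfers directly to $f|W_\fm$; together with the already-noted independence of $f|W_\fm$ from the chosen representative matrix, this finishes the invariance claim.

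For cuspidality I would use commensurability rather than a direct tail computation. If $f\in\cH_0(\fn,R)$, then its support is a finite union of $\G_0(\fn)$-orbits of edges, and for $\delta\in\GL_2(F)$ the support of $f|\delta$ is $\delta^{-1}$ applied to that of $f$, hence a finite union of $\delta^{-1}\G_0(\fn)\delta$-orbits. Since every matrix $\delta$ occurring in $T_\fp$, $U_\fp$, $W_\fm$ lies in $\GL_2(F)$, which is contained in the commensurator of $\G_0(\fn)$, the group $\G_0(\fn)\cap\delta^{-1}\G_0(\fn)\delta$ has finite index in $\G_0(\fn)$; thus $f|\delta$ again has finite support modulo $\G_0(\fn)$. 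As each operator is a finite sum of such translations, $\cH_0(\fn,R)$ is preserved. (One can equally describe this on the half-lines $h_s$ of $\G_0(\fn)\bs\sT$ via the Fourier expansion (\ref{eq-Fexp}), checking that the operators act on the tail of $f$ through its Fourier coefficients.)

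Finally, the commutation relations are all instances of the principle that Hecke data supported at distinct primes commute: in each case the two operators are built from matrices whose nontrivial local behavior occurs at coprime ideals ($\fp\nmid\fn$ against $U_{\fp'}$ with $\fp'\mid\fn$; $\fp\nmid\fn$ against $W_\fm$; $U_\fp$ against $W_\fm$ with $\fp\nmid\fm$), so the representatives for the composite in either order can be matched modulo $\G_0(\fn)$ and $Z(\Fi)$. I would carry this out by writing both composites as double sums and exhibiting an explicit bijection between the index sets intertwining the two orders, the needed matrix identities reducing to $B_{\fm_1}B_{\fm_2}=B_{\fm_1\fm_2}$ together with $S_b$ commuting past the diagonal and Atkin-Lehner matrices when the relevant degrees are coprime. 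I expect the main obstacle to be precisely this coset bookkeeping in the commutation relations, and most delicately $U_\fp W_\fm=W_\fm U_\fp$, where one must track the $U_\fp$-representatives as they are conjugated by $W_\fm$ and verify that they again form a complete, non-redundant set of $U_\fp$-representatives modulo $\G_0(\fn)$ and the center.
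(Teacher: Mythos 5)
Your proposal is correct and is essentially the same argument the paper has in mind: the paper's proof consists of a single sentence deferring to the classical group-theoretic proofs (Miyake, \S 4.5), and your double-coset/normalizer/commensurability argument is precisely that classical proof written out for pseudo-harmonic cochains. The only remark worth making is that your commutation relations are left at the level of a plan, but so are the paper's, and the coset bookkeeping you describe is the standard and correct way to complete it.
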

\begin{proof} The group-theoretic proofs of the analogous statement for operators 
acting on classical modular forms work also in this setting; cf. \cite[$\S$4.5]{Miyake}. 
\end{proof}

Given ideals $\fn, \fm\lhd A$, denote 
$$
\G_0(\fn,\fm)=\left\{\begin{pmatrix} a & b \\  c & d\end{pmatrix}\in \GL_2(A)\ \big|\ c\in \fn, b\in \fm \right\}. 
$$

\begin{lem}\label{lemAL2} We have:
\begin{enumerate}
\item If $f\in \ctH(\fn, R)$, then $f|B_\fm$ is $\G_0(\fn\fm)$-invariant and 
$f|B_\fm^{-1}$ is $\G_0(\fn/(\fn, \fm), \fm)$-invariant.  
\item
If $\fn$ is coprime to $\fm$ and $f\in \ctH(\fn, R)$, then 
$$(f|B_\fm)|W_\fm=f,$$ 
where $W_\fm$ is the Atkin-Lehner involution acting on $\ctH(\fn\fm, R)$. 
\item If $\fm\parallel \fn$, $(\fb, \fm)=1$, and $f\in \ctH(\fn, R)$, then 
$$
(f|B_\fb)|W_\fm=(f|W_\fm)|B_\fb,
$$ 
where on the left hand-side $W_\fm$ denotes the Atkin-Lehner involution 
acting on $\ctH(\fn\fb, R)$ and on the right hand-side $W_\fm$ denotes the  
involution acting on $\ctH(\fn, R)$. 
\item If $f\in \ctH(\sT, R)^{\G_\infty}$, then 
$$
(f|B_\fp)|U_\fp=|\fp|\cdot f. 
$$
\end{enumerate}
\end{lem}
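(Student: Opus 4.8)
The plan is to reduce each of the four identities to a single relation among matrices of the shape $M_1M_2=\delta\,M_3$ with $\delta$ in the relevant arithmetic group, and then to invoke the invariance of $f$ together with two formal facts: the slash action is a right action, $f|(MM')=(f|M)|M'$, and scalar matrices act trivially on $\sT$. With these in hand parts (1), (2) and (4) become short matrix computations, while part (3) requires one genuine idea. Throughout, $\G_0(\fn)$-invariance of $f$ means $f|\gamma=f$ for $\gamma\in\G_0(\fn)$.

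For part (1), I would take a general $\gamma=\begin{pmatrix} a & b\\ c & d\end{pmatrix}\in\G_0(\fn\fm)$ and conjugate: $B_\fm\gamma B_\fm^{-1}=\begin{pmatrix} a & \fm b\\ c/\fm & d\end{pmatrix}$, which lies in $\G_0(\fn)$ because $\fn\fm\mid c$ forces $\fn\mid c/\fm$. Hence $(f|B_\fm)|\gamma=f|(B_\fm\gamma)=(f|(B_\fm\gamma B_\fm^{-1}))|B_\fm=f|B_\fm$. The second assertion is symmetric: for $\gamma\in\G_0(\fn/(\fn,\fm),\fm)$ one computes $B_\fm^{-1}\gamma B_\fm=\begin{pmatrix} a & \fm^{-1}b\\ \fm c & d\end{pmatrix}$, which is integral since $b\in\fm$, and whose lower-left entry satisfies $\fm c\in\fm\cdot\fn/(\fn,\fm)\subseteq\fn$; so this conjugate lies in $\G_0(\fn)$ and fixes $f$. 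Part (4) is equally direct: expanding $U_\fp=\sum_{\deg b<\deg\fp}B_\fp^{-1}S_b$ and cancelling $B_\fp B_\fp^{-1}$ gives $(f|B_\fp)|U_\fp=\sum_{\deg b<\deg\fp} f|S_b$; each $S_b\in\G_\infty$ fixes $f$, and there are exactly $|\fp|$ admissible $b$, yielding $|\fp|\cdot f$.

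For part (2), I would use a representative $W_\fm=\begin{pmatrix} a\fm & b\\ c\fn\fm & d\fm\end{pmatrix}$ at level $\fn\fm$ (legitimate because $(\fn,\fm)=1$ gives $\fm\parallel\fn\fm$) and compute $B_\fm W_\fm=\fm\begin{pmatrix} a\fm & b\\ c\fn & d\end{pmatrix}$. The scalar $\fm$ acts trivially, and the remaining matrix $M=\begin{pmatrix} a\fm & b\\ c\fn & d\end{pmatrix}$ lies in $\G_0(\fn)$: its lower-left entry is $c\fn$, and its determinant $\det(W_\fm)/\fm$ is a unit since $\det W_\fm$ generates $\fm$. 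Thus $(f|B_\fm)|W_\fm=f|M=f$, and this is independent of the chosen representative by the well-definedness of $W_\fm$ recalled before the lemma.

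Part (3) is where the real work lies, and the main obstacle is precisely the construction of a compatible pair of Atkin--Lehner representatives. The naive conjugate $B_\fb^{-1}W_\fm B_\fb=\begin{pmatrix} a\fm & b/\fb\\ c\fn\fb & d\fm\end{pmatrix}$ fails to be integral, so the point is to choose the level-$\fn$ representative of $W_\fm$ with top-right entry divisible by $\fb$. Writing $\fn=\fm\fl$ with $(\fm,\fl)=1$ and taking $b=\fb y_1$, $a=c=1$, the determinant condition becomes $\fm w-\fb\fl y_1=1$, which is solvable precisely because $(\fm,\fb\fl)=1$, and this holds by $\fm\parallel\fn$ and $(\fm,\fb)=1$. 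For such a representative $W_\fm=\begin{pmatrix} \fm & \fb y_1\\ \fn & \fm w\end{pmatrix}$, the conjugate $W_\fm':=B_\fb^{-1}W_\fm B_\fb=\begin{pmatrix} \fm & y_1\\ \fn\fb & \fm w\end{pmatrix}$ is integral and is itself a legitimate Atkin--Lehner matrix at level $\fn\fb$, of determinant $\fm(\fm w-y_1\fl\fb)=\fm$. Consequently $B_\fb W_\fm'=W_\fm B_\fb$ on the nose, so $(f|B_\fb)|W_\fm'=f|(W_\fm B_\fb)=(f|W_\fm)|B_\fb$, and well-definedness of the level-$\fn\fb$ involution replaces $W_\fm'$ by $W_\fm$. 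Verifying that conjugation by $B_\fb$ carries a carefully chosen $W_\fm$ at level $\fn$ to a valid $W_\fm$ at level $\fn\fb$ is what forces the coprimality hypotheses, whereas parts (1), (2) and (4) are mechanical.
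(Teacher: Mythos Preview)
Your argument is correct and is exactly the kind of ``straightforward manipulations with matrices'' the paper invokes (with a reference to Atkin--Lehner) in lieu of a detailed proof; you have simply supplied the omitted details. In particular, your handling of part~(3)---choosing the level-$\fn$ representative of $W_\fm$ with top-right entry divisible by $\fb$ so that $B_\fb^{-1}W_\fm B_\fb$ is a valid Atkin--Lehner matrix at level $\fn\fb$---is the standard verification behind this commutation identity.
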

\begin{proof} This follows from straightforward manipulations with matrices; cf. \cite[$\S$2]{AL}. 
\end{proof}

\begin{lem}\label{lem_new14} Assume $R$ is a coefficient ring. 
For any non-zero ideal $\fm\lhd A$ and $f\in\ctH(\sT, R)^{\G_\infty}$ we have 
\begin{align*}
(f|B_\fm)^0(\pi_\infty^k) &=f^0(\pi_\infty^{k-\deg(\fm)})\\ (f|B_\fm)^\ast(\fn) &=f^\ast(\fn/\fm). 
\end{align*}
\end{lem}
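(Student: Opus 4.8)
The plan is to insert the defining sums for the two Fourier coefficients and to track how left translation by $B_\fm$ moves the standard representatives $\begin{pmatrix}\pi_\infty^k & u\\ 0 & 1\end{pmatrix}$, reducing first modulo $Z(\Fi)\cI_\infty$ and then modulo the relevant powers of $\pi_\infty$. Set $d=\deg(\fm)$. Since $\ord_\infty(\fm)=-d$, I would write $\fm=\pi_\infty^{-d}v$ with $v:=\fm\pi_\infty^{d}\in\cO_\infty^\times$, so that
\[
B_\fm\begin{pmatrix}\pi_\infty^k & u\\ 0 & 1\end{pmatrix}=\begin{pmatrix}\pi_\infty^{k-d} & \fm u\\ 0 & 1\end{pmatrix}\begin{pmatrix}v & 0\\ 0 & 1\end{pmatrix},
\]
and the right-hand factor lies in $\cI_\infty$, hence can be discarded when evaluating $f$. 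I would also record that $f|B_\fm$ is again pseudo-harmonic and $\G_\infty$-invariant: for $\gamma\in\G_\infty$ the conjugate $B_\fm\gamma B_\fm^{-1}$ is upper triangular with entries in $A$, hence lies in $\G_\infty$, so $(f|B_\fm)|\gamma=f|B_\fm$. In particular Lemma \ref{lemFH}(i) applies to $f|B_\fm$ as well.

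For the constant term both $(f|B_\fm)^0$ and $f^0$ satisfy $g^0(\pi_\infty^k)=g^0(1)q^{-k}$ by Lemma \ref{lemFH}(i), so it suffices to compare the two sides at $k=0$. There the reduction above gives $(f|B_\fm)^0(1)=f\left(\begin{pmatrix}\pi_\infty^{-d} & 0\\ 0 & 1\end{pmatrix}\right)=f^0(\pi_\infty^{-d})=q^{d}f^0(1)$, and reinstating the factor $q^{-k}$ yields $(f|B_\fm)^0(\pi_\infty^k)=q^{d-k}f^0(1)=f^0(\pi_\infty^{k-d})$, as claimed.

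For the $\fn$-th coefficient I would put $N=2+\deg(\fn)$, substitute the reduction into the definition of $(f|B_\fm)^\ast(\fn)$, and change variables $u'=\fm u$. As $u$ runs over $(\pi_\infty)/(\pi_\infty^{N})$ and $nu=(n/m)u'$, the coefficient becomes $q^{-1-\deg(\fn)}$ times $\sum_{u'} f\left(\begin{pmatrix}\pi_\infty^{N-d} & u'\\ 0 & 1\end{pmatrix}\right)\eta\left(-(n/m)u'\right)$, where now $u'$ ranges over the enlarged set $\pi_\infty^{1-d}\cO_\infty/\pi_\infty^{N-d}\cO_\infty$. The step I expect to be the main obstacle is to collapse this enlarged range back to $(\pi_\infty)/(\pi_\infty^{N-d})$ while producing exactly the factor $q^{d}$ that turns the prefactor $q^{-1-\deg(\fn)}$ into $q^{-1-\deg(\fn/\fm)}$. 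To this end I would write $u'=w+u_0'$ with $u_0'\in(\pi_\infty)/(\pi_\infty^{N-d})$ and $w=\sum_{i=1-d}^{0}w_i\pi_\infty^{i}$; the $q^{d}$ choices of $w$ are precisely the polynomials in $A$ of degree $<d$. Since $w\in A$, the matrix $\begin{pmatrix}1 & w\\ 0 & 1\end{pmatrix}$ lies in $\G_\infty$, so $\G_\infty$-invariance removes $w$ from the argument of $f$, while $\eta\left(-(n/m)u'\right)=\eta\left(-(n/m)w\right)\eta\left(-(n/m)u_0'\right)$. Hence the enlarged sum factors as $\left(\sum_{\deg w<d}\eta\left(-(n/m)w\right)\right)$ times the sum over $u_0'$, and everything reduces to the character sum $\sum_{\deg w<d}\eta\left(-(n/m)w\right)$.

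Finally I would evaluate this character sum by orthogonality, using the polynomial division $n=Pm+r$ with $P\in A$ and $\deg r<d$. The $\pi_\infty^{1}$-coefficient of $(n/m)w$ depends only on $r/m$ (as $Pw\in A$ contributes nothing in degree $1$), and $w\mapsto\eta\left(-(n/m)w\right)$ is a character of the $\F_q$-vector space $\{w\in A:\deg w<d\}$ which is trivial precisely when $\ord_\infty(r/m)>d$, i.e.\ when $r=0$. If $\fm\mid\fn$ then $r=0$, the character is trivial, the sum equals $q^{d}$, and combining with the prefactors gives $(f|B_\fm)^\ast(\fn)=f^\ast(\fn/\fm)$. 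If $\fm\nmid\fn$ then $r\neq0$, so $\ord_\infty(r/m)=d-\deg r\leq d$, the character is nontrivial, and the sum vanishes; this agrees with $f^\ast(\fn/\fm)=0$, since $\fn/\fm$ is then not an effective divisor. This will complete the proof.
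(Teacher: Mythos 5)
Your argument is correct and essentially complete, but it is worth noting that the paper does not prove this lemma at all: it simply cites Proposition 2.10 of Gekeler's \emph{Improper Eisenstein series} paper, where the analogous computation is carried out. What you have written is therefore the direct verification that the paper outsources, and it goes through: the reduction $B_\fm\left(\begin{smallmatrix}\pi_\infty^k & u\\ 0 & 1\end{smallmatrix}\right)\sim\left(\begin{smallmatrix}\pi_\infty^{k-d} & \fm u\\ 0 & 1\end{smallmatrix}\right)$ modulo $\cI_\infty$ is right, the observation that $f|B_\fm$ is again pseudo-harmonic and $\G_\infty$-invariant (via $B_\fm\G_\infty B_\fm^{-1}\subseteq\G_\infty$) correctly licenses the use of Lemma \ref{lemFH} for the constant term, and the splitting of the enlarged $u'$-range into a polynomial part $w$ of degree $<d$ plus $u_0'\in(\pi_\infty)/(\pi_\infty^{N-d})$, followed by orthogonality for the character $w\mapsto\eta(-(r/m)w)$, gives exactly the dichotomy $\fm\mid\fn$ versus $\fm\nmid\fn$. (The vanishing of the nontrivial character sum is legitimate over a coefficient ring because the values are $p$-th roots of unity, whose full sum is zero in $\tilde{R}$ and hence in $R$.)

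One small point you should patch: your decomposition $u'=w+u_0'$ with $u_0'\in(\pi_\infty)/(\pi_\infty^{N-d})$ presupposes $N-d\geq 1$, i.e.\ $\deg(\fn)\geq\deg(\fm)-1$. In the degenerate case $\deg(\fn)\leq\deg(\fm)-2$ the quotient $\pi_\infty^{1-d}\cO_\infty/\pi_\infty^{N-d}\cO_\infty$ is smaller than the set of pairs $(w,u_0')$ and the factorization of the sum as written breaks down. This case is harmless --- there $\fm\nmid\fn$, and since $N-d\leq 1$ the value $f\left(\begin{smallmatrix}\pi_\infty^{N-d} & y\\ 0 & 1\end{smallmatrix}\right)$ equals $f^0(\pi_\infty^{N-d})$ independently of $y$ by the Fourier expansion (\ref{eq-Fexp}), so the original sum is $f^0(\pi_\infty^{N-d})\sum_{u}\eta(-nu)=0$ because $n\neq 0$ and $N\geq\deg(n)+2$ --- but it should be disposed of separately rather than folded into the generic computation.
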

\begin{proof}
See Proposition 2.10 in \cite{Improper}. 
\end{proof}

\begin{lem}\label{lem1.15fm} Assume $R$ is a coefficient ring and $f\in \ctH(\fn, R)$ is an eigenfunction of all $T_\fp$, $\fp\nmid \fn$; that is, 
$f|T_\fp=\la_\fp f$ for some $\la_\fp\in R$. Then the Fourier 
coefficients $f^\ast(\fm)$, with $\fm$ coprime to $\fn$, are uniquely determined by $f^\ast(1)$ 
and the eigenvalues $\la_\fp$, $\fp\nmid \fn$. 
\end{lem}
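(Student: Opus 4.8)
The plan is to convert the eigenvalue equation $f|T_\fp=\la_\fp f$ into a linear recursion among the Fourier coefficients $f^\ast(\fm)$, and then solve that recursion by induction on $\deg(\fm)$. Throughout I adopt the convention, consistent with the definition of the Fourier coefficients, that $f^\ast(\fa)=0$ whenever $\fa$ fails to be an effective (integral) divisor.

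First I would record how the two pieces of $T_\fp=U_\fp+B_\fp$ act on Fourier coefficients. Lemma \ref{lem_new14} gives, for every non-zero $\fr\lhd A$,
\[
(f|B_\fp)^\ast(\fr)=f^\ast(\fr/\fp),
\]
which vanishes unless $\fp\mid\fr$. For the $U_\fp$-part I would establish
\[
(f|U_\fp)^\ast(\fr)=|\fp|\cdot f^\ast(\fr\fp),\qquad \fp\nmid\fn.
\]
This is the one genuinely computational input. It can be obtained directly from the definitions of $f^\ast$ and $U_\fp=\sum_{\deg(b)<\deg(\fp)}B_\fp^{-1}S_b$ by a short character-sum manipulation (as in \cite{Improper, Pal}). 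Alternatively, one first checks that $(f|U_\fp)^\ast(\fr)$ is proportional to $f^\ast(\fr\fp)$ with a constant independent of $f$, and then pins the constant down using Lemma \ref{lemAL2}(4): applying the proportionality to $f|B_\fp$ and comparing with the identity $(f|B_\fp)|U_\fp=|\fp|\cdot f$ via the injectivity of $f\mapsto\big(f^0(1),(f^\ast(\fm))_\fm\big)$ from Lemma \ref{lem2.3FT} forces the constant to be $|\fp|$.

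Granting these two formulas, comparing the $\fr$-th Fourier coefficients on both sides of $f|T_\fp=\la_\fp f$ yields, for every prime $\fp\nmid\fn$ and every non-zero $\fr\lhd A$,
\[
\la_\fp\, f^\ast(\fr)=|\fp|\cdot f^\ast(\fr\fp)+f^\ast(\fr/\fp).
\]
Because $R$ is a coefficient ring we have $p\in R^\times$, hence $|\fp|=q^{\deg(\fp)}$ is a unit of $R$, and the recursion can be solved upward:
\[
f^\ast(\fr\fp)=|\fp|^{-1}\big(\la_\fp\, f^\ast(\fr)-f^\ast(\fr/\fp)\big).
\]
This invertibility of $|\fp|$ is exactly what lets the argument propagate, and is where the hypothesis that $R$ be a coefficient ring enters.

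Finally I would run an induction on $\deg(\fm)$ over ideals $\fm$ coprime to $\fn$. The base case $\fm=(1)$ is the given datum $f^\ast(1)$. For the inductive step, choose a prime $\fp\mid\fm$; since $\fm$ is coprime to $\fn$ we have $\fp\nmid\fn$, so the solved recursion applies with $\fr=\fm/\fp$ and gives
\[
f^\ast(\fm)=|\fp|^{-1}\big(\la_\fp\, f^\ast(\fm/\fp)-f^\ast(\fm/\fp^2)\big).
\]
Both $\fm/\fp$ and $\fm/\fp^2$ are divisors supported away from $\fn$ (or non-effective, in which case the coefficient is $0$) and of degree strictly less than $\deg(\fm)$, so by the inductive hypothesis their Fourier coefficients are determined by $f^\ast(1)$ and the eigenvalues $\la_\fp$; hence so is $f^\ast(\fm)$. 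The main obstacle is the computational $U_\fp$-formula of the second paragraph; once it is in hand, the recursion and the induction are entirely formal.
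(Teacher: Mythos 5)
Your proof is correct and is essentially the paper's argument in unpacked form: the paper cites the facts that $T_\fm$ is a polynomial in the $T_\fp$ and that $(f|T_\fm)^\ast(1)=|\fm|f^\ast(\fm)$, which together encode exactly your coefficient recursion $\la_\fp f^\ast(\fr)=|\fp|f^\ast(\fr\fp)+f^\ast(\fr/\fp)$ and the resulting induction, with $|\fp|\in R^\times$ (from $p\in R^\times$) playing the same role in both.
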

\begin{proof} This follows from two well-known facts: (1) every Hecke operator $T_\fm$, with $\fm$ coprime to $\fn$, 
can be expressed as a polynomial in $T_\fp$'s with integer coefficients, where $\fp$'s are the prime divisors of $\fm$; 
(2) $(f|T_\fm)^\ast(1)=|\fm|f^\ast(\fm)$. For more details, see the discussion in \cite[$\S$3]{Analytical}. 
\end{proof}

As we mentioned, the cusps of $\G_0(\fn)$ are in bijection with the orbits 
of the action of $\G_0(\fn)$ on 
$$
\p^1(F)=\p^1(A)=\left\{\begin{pmatrix} a \\ b\end{pmatrix}\ \big|\ a, b\in A, \ (a, b)=1,\ a \text{ is monic}\right\}, 
$$
where $\G_0(\fn)$ acts on $\p^1(F)$ from the left as on column vectors. We leave the proof 
of the following lemma to the reader. 

\begin{lem}\label{lemCusps} Assume $\fn$ is square-free. 
\begin{enumerate}
\item 
For a monic $\fm|\fn$, let $[\fm]$ 
be the orbit of $\begin{pmatrix} 1 \\ \fm \end{pmatrix}$ under the action of $\G_0(\fn)$. 
Then $[\fm]\neq [\fm']$ if $\fm\neq \fm'$, and the set $\{[\fm]\ |\ \fm|\fn\}$ 
is the set of cusps of $\G_0(\fn)$. In particular, there are $2^s$ cusps, where $s$ 
is the number of prime divisors of $\fn$. 
\item Since $W_\fd$ normalizes $\G_0(\fn)$, 
it acts on the set of cusps of $\G_0(\fn)$. We have  
$$
W_\fd [\fm] = \left[\frac{\fm\fd}{(\fm, \fd)^2}\right]. 
$$
\end{enumerate}
\end{lem}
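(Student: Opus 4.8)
The plan is to separate the $\G_0(\fn)$-orbits on $\p^1(F)=\p^1(A)$ by the monic invariant $(c,\fn)$ and then to realize the representatives $\binom{1}{\fm}$ by an explicit matrix construction. \emph{For Part (1), injectivity,} set $\iota\binom{a}{c}:=(c,\fn)$ for a primitive column $\binom{a}{c}$. I would first check $\iota$ is constant on orbits: for $\gamma=\begin{pmatrix}\alpha&\beta\\ \gamma_1\fn&\delta\end{pmatrix}\in\G_0(\fn)$ the bottom entry of $\gamma\binom{a}{c}$ is $c'=\gamma_1\fn a+\delta c\equiv\delta c\pmod{\fp}$ for every prime $\fp\mid\fn$; since $\det\gamma\in\F_q^\times$ forces $\alpha\delta\equiv\det\gamma\not\equiv 0\pmod\fp$, we get $\fp\nmid\delta$, hence $\fp\mid c'\iff\fp\mid c$. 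As $\fn$ is square-free this gives $(c',\fn)=(c,\fn)$, and $\gamma$ preserves primitivity because $\gamma^{-1}\in\Mat_2(A)$. Since $\iota\binom{1}{\fm}=(\fm,\fn)=\fm$, distinct divisors $\fm\mid\fn$ lie in distinct orbits.

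\emph{For Part (1), exhaustiveness,} it remains to show every primitive $\binom{a}{c}$ is $\G_0(\fn)$-equivalent to $\binom{1}{\fm}$ with $\fm=(c,\fn)$. Write $\fn=\fm\fn_1$ with $\fn_1=\fn/\fm$ and $c=\fm c_1$; square-freeness together with $(a,c)=1$ yields $(a,\fm)=(a,c_1)=(c_1,\fn_1)=(\fm,\fn_1)=1$. I would then seek $\eta\in\G_0(\fn)$ with $\eta\binom{1}{\fm}=\binom{a}{c}$, trying
$$\eta=\begin{pmatrix} a+\fm r & -r\\ -s\fn & c_1+\fn_1 s\end{pmatrix},\qquad r,s\in A,$$
whose lower-left entry lies in $\fn$ and which satisfies $\eta\binom{1}{\fm}=\binom{a}{c}$ for all $r,s$. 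A direct computation gives $\det\eta=ac_1+a\fn_1 s+\fm c_1 r$, and the coprimalities above force $(a\fn_1,\fm c_1)=1$; hence $r,s$ may be chosen with $\det\eta=1$, so $\eta\in\G_0(\fn)$ and $[\binom{a}{c}]=[\fm]$. Since the monic divisors of $\fn=\prod_{i=1}^s\fp_i$ number $2^s$, this identifies the cusps with $\{[\fm]:\fm\mid\fn\}$.

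\emph{For Part (2),} since $\W\cong(\Z/2\Z)^s$ is generated by the $W_\fp$ ($\fp\mid\fn$) with $W_\fd=\prod_{\fp\mid\fd}W_\fp$ by \eqref{eqWs}, and the proposed rule $[\fm]\mapsto[\fm\fd/(\fm,\fd)^2]$ is the composition of the single-prime toggles, it suffices to treat $\fd=\fp$. Writing $W_\fp=\begin{pmatrix} a\fp & b\\ c\fn & d\fp\end{pmatrix}$ with $(\det W_\fp)=\fp$ as ideals, I would compute $W_\fp\binom{1}{\fm}=\binom{X}{Y}$ with $X=a\fp+b\fm$, $Y=c\fn+d\fp\fm$, and recover the invariant by comparing $\ord_\fl Y$ with $\ord_\fl X$ for $\fl\mid\fn$. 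The identity $(\det W_\fp)=\fp$ forces $\fp\nmid bc$ and, for $\fl\mid\fn/\fp$, $\fl\nmid ad$; with these, a short valuation check shows that $\ord_\fp$ of the bottom entry of the primitive representative switches the membership of $\fp$ (adding it when $\fp\nmid\fm$, removing it when $\fp\mid\fm$) while fixing the membership of every $\fl\neq\fp$. Thus $W_\fp[\fm]=[\fm\fp/(\fm,\fp)^2]$, and composition yields the general formula.

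\emph{The main obstacles} are non-formal only at two points: (i) arranging $\det\eta$ to be a unit in Part (1), which rests on the coprimality $(a\fn_1,\fm c_1)=1$; and (ii) the valuation bookkeeping at $\fp$ in Part (2), where one must first exploit $(\det W_\fp)=\fp$ to determine the $\fp$-divisibilities of $a,b,c,d$ before comparing $\ord_\fp X$ and $\ord_\fp Y$. Both use the square-freeness of $\fn$ essentially. Alternatively, the construction in Part (1) can be produced by the approximation theorem, just as in the proof of Lemma \ref{lem2.3FT}.
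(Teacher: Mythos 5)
The paper gives no proof of this lemma (``We leave the proof of the following lemma to the reader''), so there is no argument of the authors' to compare against; your proof is the standard one and it is correct and complete. The two points you flag as non-formal do check out: the coprimalities $(a,\fm)=(a,c_1)=(\fn_1,\fm)=(\fn_1,c_1)=1$ (all consequences of $(a,c)=1$, $\fm=(c,\fn)$ and square-freeness) give $(a\fn_1,\fm c_1)=(1)$, so $r,s$ can be chosen with $\det\eta=1$; and in Part (2) the valuation comparison is safe from cancellation because in each case exactly one of the two summands of $X=a\fp+b\fm$ (resp.\ of $Y=c\fn+d\fp\fm$) has strictly smaller $\fl$-valuation, so $\ord_\fl Y>\ord_\fl X$ correctly records whether $\fl$ divides the bottom entry of the primitive representative, yielding the single-prime toggle and hence, via (\ref{eqWs}), the general formula.
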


\begin{defn} Let $\fn=\fp_1\cdots \fp_s$ be a square-free ideal in $A$ with the given prime decomposition. 
Let 
$$
\E=\{(\eps_1, \dots, \eps_s)\ |\ \eps_i=\pm 1,\ 1\leq i\leq s\}.
$$ 
Let $R$ be a $\Z[1/2]$-algebra and $M$ be an $R[\W]$-module. For each $\beps\in \E$, we let 
$M^\beps$ be the maximum direct summand of $M$ on which $W_{\fp_i}$ acts by multiplication by $\eps_i$ ($1\leq i\leq s$), so that 
$$
M=\bigoplus_{\beps\in \E} M^\beps. 
$$
\end{defn}


\subsection{Atkin-Lehner type result} The results in this subsection are the analogues for $R$-valued 
pseudo-harmonic cochains of some of the results of Atkin and Lehner \cite{AL}. In our proofs 
we also use ideas of Ohta from \cite[$\S$2.1]{Ohta}. 

\begin{lem}\label{lemLevelLow} Assume $R$ is a coefficient ring. Let $\fp\lhd A$ be a prime such that $\fp\parallel \fn$. 
Suppose $f\in \ctH(\fn, R)$ is such that $f^\ast(\fm)=0$ if $\fp\nmid \fm$. 
Then there is $g\in \ctH(\fn/\fp, R)$ such that 
$$f=g|B_\fp\quad \text{ and }\quad f|W_\fp= g.$$ 
Moreover, if $f$ is harmonic, then so is $g$. 
\end{lem}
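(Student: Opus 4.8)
The plan is to produce $g$ explicitly as a normalization of $f|U_\fp$, to read the identity $f=g|B_\fp$ off the Fourier coefficients, and to deduce $f|W_\fp=g$ formally; the genuine content is that $g$ descends from level $\fn$ to level $\fn/\fp$.

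Write $\fn=\fp\fn'$ with $\fn'=\fn/\fp$; since $\fp\parallel\fn$, the ideals $\fp$ and $\fn'$ are coprime, and since $R$ is a coefficient ring we have $|\fp|=q^{\deg(\fp)}\in R^\times$. Set $g:=|\fp|^{-1}\,f|U_\fp$. By Lemma \ref{lem-TU}, $U_\fp$ preserves both $\ctH(\fn,R)$ and $\cH(\fn,R)$, so $g\in\ctH(\fn,R)$ and $g$ is harmonic whenever $f$ is; this already disposes of the last assertion. The standard effect of $U_\fp$ on Fourier coefficients (a direct computation from $f|U_\fp=\sum_{\deg(b)<\deg(\fp)}f|B_\fp^{-1}S_b$, cf.\ \cite{Improper}) is $(f|U_\fp)^\ast(\fm)=|\fp|\,f^\ast(\fm\fp)$, hence $g^\ast(\fm)=f^\ast(\fm\fp)$ for all $\fm$. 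By Lemma \ref{lem_new14}, $(g|B_\fp)^\ast(\fm)=g^\ast(\fm/\fp)$, and this equals $f^\ast(\fm)$ for every $\fm$: when $\fp\mid\fm$ it is $f^\ast(\fm)$ by the previous line, and when $\fp\nmid\fm$ both sides vanish, the left because $\fm/\fp$ is not integral and the right by the hypothesis $f^\ast(\fm)=0$. The constant terms agree by the same bookkeeping together with Lemma \ref{lemFH}(i). As $g|B_\fp$ and $f$ both lie in $\ctH(\fn,R)$, Lemma \ref{lem2.3FT} forces $g|B_\fp=f$. This is the one place where the hypothesis $f^\ast(\fm)=0$ for $\fp\nmid\fm$ enters.

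Granting the descent claim $g\in\ctH(\fn/\fp,R)$, the remaining identity is formal: applying Lemma \ref{lemAL2}(2) to $g\in\ctH(\fn',R)$ and the coprime pair $(\fn',\fp)$ gives $(g|B_\fp)|W_\fp=g$, that is $f|W_\fp=g$. Thus the whole lemma reduces to showing that $g$, which is $\G_0(\fn)$-invariant, is in fact $\G_0(\fn/\fp)$-invariant.

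This descent is the main obstacle, and it must again use the vanishing hypothesis: a short matrix computation shows that for $\gamma\in\G_0(\fn')\setminus\G_0(\fn)$ the conjugate $W_\fp\gamma W_\fp^{-1}$ does not lie in $\G_0(\fn)$ (its upper-right entry fails to be integral, with obstruction a unit modulo $\fp$), so no purely group-theoretic argument can upgrade the invariance. The plan is instead: first reduce to a single extra generator, using that reduction modulo $\fp$ identifies $\G_0(\fn')/\G_0(\fn)$ with $\GL_2(\F_\fp)/B(\F_\fp)\cong\p^1(\F_\fp)$, so that $\G_0(\fn')=\langle\G_0(\fn),\sigma\rangle$ for any $\sigma\in\G_0(\fn')$ reducing to the Weyl element of $\GL_2(\F_\fp)$; then verify $g|\sigma=g$ edge by edge, noting that (by the argument proving Lemma \ref{lem2.3FT}) every edge is $\G_0(\fn)$-equivalent to a positively oriented one, on which $g$ is given by its Fourier expansion, and that the discrepancy $g|\sigma-g$ collapses to a combination of coefficients $f^\ast(\fm)$ with $\fp\nmid\fm$, which vanish by hypothesis. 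I expect this last edge computation --- pinning the discrepancy to the vanishing Fourier coefficients --- to be the delicate step, and it is where the methods of Atkin--Lehner \cite{AL} and Ohta \cite[\S2.1]{Ohta} come in.
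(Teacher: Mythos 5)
Your bookkeeping up to the descent is fine: taking $g=|\fp|^{-1}f|U_\fp$, matching Fourier coefficients, and invoking Lemma \ref{lem2.3FT} (at level $\fn\fp$, where both $g|B_\fp$ and $f$ live) does give $f=g|B_\fp$, and $f|W_\fp=g$ then follows from Lemma \ref{lemAL2}(2) exactly as you say. But the heart of the lemma is the descent $g\in\ctH(\fn/\fp,R)$, and there you have only a plan, not a proof --- you say yourself that the ``delicate step'' of pinning $g|\sigma-g$ to the vanishing coefficients is still to be done. Worse, the route you sketch has a structural flaw: to compare $g$ and $g|\sigma$ via Fourier expansions you need $g|\sigma$ to be $\G_\infty$-invariant (otherwise it has no Fourier expansion at all), and $g|\sigma$ is a priori only invariant under $\sigma^{-1}\G_0(\fn)\sigma$, which does not contain $\G_\infty$ for $\sigma$ reducing to the Weyl element. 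So the ``edge by edge'' comparison cannot be run as stated.

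The missing idea, which is how the paper proceeds, is to exploit that once $f=g|B_\fp$ is known, $g=f|B_\fp^{-1}$, and Lemma \ref{lemAL2}(1) makes $g$ invariant under $\G_0(\fn/\fp,\fp)$ \emph{for free} (a conjugation identity, no hypothesis on $f$ needed). The only extra invariance required is under the translations $S_a=\left(\begin{smallmatrix}1&a\\0&1\end{smallmatrix}\right)$, $a\in A$, since $\G_0(\fn/\fp,\fp)$ and $\G_\infty$ together generate $\G_0(\fn/\fp)$. That single check is a one-line Fourier computation: $g(S_ae)=f\bigl(\bigl(\begin{smallmatrix}\pi_\infty^{k+\deg\fp}&u+a/\fp\\0&1\end{smallmatrix}\bigr)\bigr)$, and in the expansion \eqref{eq-Fexp} the character picks up the factor $\eta(ma/\fp)$, which equals $1$ precisely on the surviving terms because the hypothesis kills $f^\ast(\fm)$ unless $\fp\mid\fm$, so $ma/\fp\in A$ and $\eta|_A=1$ (the values on reversed edges are then handled by Lemma \ref{lem 2.1FT}). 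This replaces your undone Weyl-element computation by a trivial one and is where the hypothesis actually enters. I recommend you restructure the descent along these lines rather than via coset representatives of $\G_0(\fn/\fp)/\G_0(\fn)$.
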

\begin{proof} Let $g:=f|B_\fp^{-1}$. By Lemma \ref{lemAL2} (1), $g$ is $\G_0(\fn/\fp, \fp)$-invariant and pseudo-harmonic. 
Moreover, given $a \in A$ and $e = \begin{pmatrix} \pi_\infty^k & u \\ 0 & 1 \end{pmatrix} \in E(\sT)^+$,
\begin{eqnarray}
g\left(\begin{pmatrix}1&a\\0&1\end{pmatrix} e\right) & = & f \left(\begin{pmatrix} \pi_\infty^{k + \deg \fp} & u + \frac{a}{\fp} \\ 0 & 1 \end{pmatrix}\right) \nonumber \\
& = &  f^0 (\pi_\infty^{k + \deg \fp}) + \sum_{\fm \lhd A\atop \deg \fm \leq k+\deg \fp-2 } f^*(\text{div}(m) \infty^{k+\deg \fp - 2}) \eta(mu + \frac{ma}{\fp}) \nonumber \\
& = & f^0 (\pi_\infty^{k + \deg \fp}) + \sum_{\fm \lhd A\atop \deg \fm \leq k+\deg \fp-2 } f^*(\text{div}(m) \infty^{k+\deg \fp - 2}) \eta(mu) \nonumber \\
& = & g(e). \nonumber
\end{eqnarray}
The third equality in the above follows from the assumption that $f^\ast(\fm) = 0$ unless $\fp \mid \fm$ and $\eta(\alpha) = 1$ for every $\alpha \in A$.
Moreover, by Lemma~\ref{lem 2.1FT}, there exists a constant $c \in R$ such that
$$g\left(\begin{pmatrix}1&a\\0&1\end{pmatrix} \bar{e}\right) = c+g\left(\begin{pmatrix}1&a\\0&1\end{pmatrix} e\right) = c + g(e) = g(\bar{e}).$$
Thus $g$ is also $\G_\infty$-invariant.
Since the subgroup of $\GL_2(A)$ generated by $\G_0(\fn/\fp, \fp)$ and $\G_\infty$ is $\G_0(\fn/\fp)$, 
we conclude that $g\in \ctH(\fn/\fp, R)$ (resp. $g\in \cH(\fn/\fp, R)$ if $f$ is harmonic). 

Now choose some matrix $\begin{pmatrix} a\fp & b \\ \fn & d\fp\end{pmatrix}$ representing $W_\fp$. 
Since 
$$
B_\fp W_\fp 
=\begin{pmatrix} a\fp^2 & b\fp \\ \fn & d\fp\end{pmatrix}= 
\begin{pmatrix} \fp & 0 \\ 0 &\fp\end{pmatrix} \begin{pmatrix} a\fp & b \\ \fn/\fp & d\end{pmatrix}
\in Z(\Fi)\G_0(\fn/\fp), 
$$
we have $f|W_\fp=g|B_\fp W_\fp =g$. 
\end{proof}

\begin{prop}\label{propALOhta}
Let $\fn=\fp_1\cdots \fp_s\lhd A$ be square-free, and $R$ a coefficient ring. Assume $f\in \ctH(\fn, R)$ is such that $f^\ast(\fm)=0$ 
unless $\fm$ is divisible by some $\fp_i$, $1\leq i\leq s$. Assume $f|W_{\fp_s}=\pm f$. Then 
$f^\ast(\fm)=0$ unless $\fm$ is divisible by some $\fp_i$, $1\leq i\leq s-1$, when $s\geq 2$, and $f^\ast(\fm)=0$ for all $\fm$  
when $s=1$. 
\end{prop}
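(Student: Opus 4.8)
The plan is to reduce every case $s\ge1$ to the one-prime situation governed by Lemma~\ref{lemLevelLow}. Put $\fn'=\fn/\fp_s=\fp_1\cdots\fp_{s-1}$; the conclusion to be proved is exactly that $f^\ast(\fm)=0$ whenever $(\fm,\fn')=1$ (for $s=1$ this means for every $\fm$). The first step is to strip off the Fourier coefficients divisible by $\fp_1,\dots,\fp_{s-1}$. For a prime $\fp\mid\fn$ I set
\[
P_\fp:=|\fp|^{-1}\,U_\fp B_\fp,\qquad D_\fp:=\id-P_\fp,
\]
which is legitimate because $|\fp|=q^{\deg\fp}\in R^\times$ for a coefficient ring $R$. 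From $(g|B_\fp)|U_\fp=|\fp|g$ (Lemma~\ref{lemAL2}(4)) one gets $P_\fp^2=P_\fp$, so $P_\fp,D_\fp$ are complementary idempotents on pseudo-harmonic cochains; by Lemma~\ref{lemLevelLow} the image of $P_\fp$ is precisely the space of cochains supported on multiples of $\fp$, and using the action of $U_\fp$ on Fourier coefficients, $(g|U_\fp)^\ast(\fr)=|\fp|\,g^\ast(\fr\fp)$ (cf.\ \cite{Improper,Pal}), one finds $(g|D_\fp)^\ast(\fm)=g^\ast(\fm)$ if $\fp\nmid\fm$ and $=0$ if $\fp\mid\fm$.

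Next I would set $\psi:=f|D_{\fp_1}\cdots D_{\fp_{s-1}}\in\ctH((\fn')^2\fp_s,R)$ (with $\psi=f$ when $s=1$). By the Fourier description of $D_\fp$ we have $\psi^\ast(\fm)=f^\ast(\fm)$ when $(\fm,\fn')=1$ and $\psi^\ast(\fm)=0$ otherwise; together with the hypothesis on the support of $f^\ast$ this forces $\psi^\ast(\fm)=0$ unless $\fp_s\mid\fm$. Each $D_{\fp_i}$ ($i<s$) commutes with $W_{\fp_s}$, since $U_{\fp_i}$ does by Lemma~\ref{lem-TU} (as $\fp_i\nmid\fp_s$) and $B_{\fp_i}$ does by Lemma~\ref{lemAL2}(3); hence $\psi|W_{\fp_s}=(f|W_{\fp_s})|D_{\fp_1}\cdots D_{\fp_{s-1}}=\pm\psi$. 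Because $\fp_s\parallel(\fn')^2\fp_s$ and $\psi^\ast(\fm)=0$ for $\fp_s\nmid\fm$, Lemma~\ref{lemLevelLow} produces $\rho\in\ctH((\fn')^2,R)$ with $\psi=\rho|B_{\fp_s}$ and $\rho=\psi|W_{\fp_s}=\pm\psi$. Thus $\rho=\pm\rho|B_{\fp_s}$, and since $\rho$ is $\G_\infty$-invariant, Lemma~\ref{lem_new14} yields $\rho^\ast(\fm)=\pm\rho^\ast(\fm/\fp_s)$ for all $\fm$. A descending induction on $\ord_{\fp_s}(\fm)$, anchored by the vanishing $\rho^\ast(\fm)=\pm\rho^\ast(\fm/\fp_s)=0$ when $\fp_s\nmid\fm$ (as then $\fm/\fp_s$ is not integral), gives $\rho^\ast\equiv0$. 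Hence $\psi^\ast(\fm)=\rho^\ast(\fm/\fp_s)=0$ for all $\fm$, so $f^\ast(\fm)=0$ whenever $(\fm,\fn')=1$, as desired.

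The step I expect to require the most care is the bookkeeping of levels: the operators $D_{\fp_i}$ raise the level successively through $\fn,\fn\fp_1,\dots,\fn\fp_1\cdots\fp_{s-1}=(\fn')^2\fp_s$, and I must check that Lemmas~\ref{lem-TU} and~\ref{lemAL2}(3) indeed apply at each intermediate level so that the commutation with $W_{\fp_s}$ persists. This is where it is essential that $\fp_s\nmid\fp_i$, so that $\fp_s$ remains exactly dividing every intermediate level and $W_{\fp_s}$ continues to make sense as an Atkin--Lehner involution there. The other point needing verification is the precise action of $U_{\fp_i}$ on Fourier coefficients, which is what makes $D_{\fp_i}$ genuinely the $\fp_i$-depletion. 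Once these operator identities are secured the descent is automatic, and the case $s=1$ is simply the above with the empty product of depletions. Conceptually the argument is the function-field analogue of the Atkin--Lehner theory, with Ohta's device of working with $W_\fp$ rather than $U_\fp$ supplying the clean eigenvalue relation $\rho=\pm\rho|B_{\fp_s}$ that drives the recursion.
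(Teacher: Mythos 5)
Your proof is correct and is essentially the paper's own argument: your depletion operator $D_\fp=\id-|\fp|^{-1}U_\fp B_\fp$ is exactly the annihilator $K_\fp$ used there, your $\psi$ and $\rho$ are the paper's $h=f|\prod_{i<s}K_{\fp_i}$ and $h'$, and the final recursion $\rho^\ast(\fm)=\pm\rho^\ast(\fm/\fp_s)$ forcing $\rho^\ast\equiv 0$ is the same closing step. The only differences are cosmetic (you verify the commutation with $W_{\fp_s}$ in-line and note the idempotency of $P_\fp$, where the paper cites \cite{PW1}).
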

\begin{proof} Let $\fp \mid \fn$. Define the analogue of the ``annihilator'' operator of Atkin and Lehner \cite{AL}:
$$
K_\fp:=1-\frac{1}{|\fp|}U_\fp B_\fp. 
$$
Since $f|U_\fp\in \ctH(\fn, R)$ 
satisfies 
$
(f|U_\fp)^\ast(\fm)=|\fp|f^\ast(\fm\fp)$  
and $f|B_\fp\in \ctH(\fn\fp, R)$ satisfies $(f|B_\fp)^\ast(\fm)=f^\ast(\fm/\fp)$, $f|K_\fp$ is in $\ctH(\fn\fp, R)$ 
and has Fourier coefficients 
$$
(f|K_\fp)^\ast(\fm) = 
\begin{cases}
f^\ast(\fm) & \text{if $\fp\nmid \fm$}; \\
0 & \text{if $\fp \mid \fm$}.
\end{cases}
$$

Given two distinct primes $\fp, \fp'$ dividing $\fn$,  $K_\fp$ commutes with $K_{\fp'}$ and $W_{\fp'}$; cf. 
\cite[Lem. 2.23]{PW1}.  
Set $h:=f|\prod_{i=1}^{s-1}K_{\fp_i} \in \ctH(\fn\fp_1\cdots\fp_{s-1}, R)$. 
Then $h^\ast(\fm)=0$ unless $\fp_s \mid \fm$, so Lemma \ref{lemLevelLow} implies that there is $h'\in \ctH(\fn\fp_1\cdots\fp_{s-1}/\fp_s, R)$ 
such that $h=h'|B_{\fp_s}$ and $h|W_{\fp_s}=h'$. The equality $h=h'|B_{\fp_s}$ implies that $h^\ast(\fp_s\fm)={h'}^\ast(\fm)$ for all $\fm$. 
On the other hand, $h|W_{\fp_s}=h'$ implies 
$$
h'=h|W_{\fp_s}=f|\prod_{i=1}^{s-1}K_{\fp_i} W_{\fp_s} = f|W_{\fp_s}\prod_{i=1}^{s-1}K_{\fp_i} =\pm f|\prod_{i=1}^{s-1}K_{\fp_i} = \pm h. 
$$
Thus $h^\ast(\fp_s\fm)={h'}^\ast(\fm)=\pm h^\ast(\fm)$ for all $\fm$. 
Now it is easy to see that $h^\ast(\fm)=0$ for all $\fm$,  
and therefore $f^\ast(\fm)=0$ unless $\fm$ is divisible by some $\fp_i$, $1\leq i\leq s-1$. 
\end{proof}


\section{Eisenstein pseudo-harmonic cochains}\label{sEPHC}

Let $\fn\lhd A$ be a non-zero ideal. 
We say that $f \in \ctH(\fn,R)$ is \textit{Eisenstein} 
if $f |T_{\fp} = (|\fp|+1)f$ for every prime ideal $\fp \lhd A$ not dividing $\fn$. 
It is clear that Eisenstein pseudo-harmonic cochains form an $R$-submodule of $\ctH(\fn, R)$ 
which we denote by $\ctE(\fn, R)$. Let $\cE(\fn, R)$ (resp. $\cE_0(\fn, R)$, $\cE_{00}(\fn, R)$) 
be the intersection of $ \ctE(\fn, R)$ with $\cH(\fn, R)$ (resp. $\cH_0(\fn, R)$, $\cH_{00}(\fn, R)$).  
We have 
$$
\ctE(\fn, R)\supset \cE(\fn, R) \supset \cE_0(\fn, R) \supset \cE_{00}(\fn, R). 
$$
The main goal of this section is to determine $\cE_0(\fn, R)$ for square-free $\fn$. For that purpose, using $\tE$ from Example \ref{example1}, 
we will construct certain explicit Eisenstein pseudo-harmonic cochains. 

\begin{lem}\label{lemtEEis}
For any prime $\fp\lhd A$, we have $\tE|T_\fp=(1+|\fp|)\tE$. Therefore, $\ctE(1, R)=\ctH(1, R)$. 
\end{lem}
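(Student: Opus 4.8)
The plan is to reduce the identity to the determination of a single Hecke eigenvalue. By Example~\ref{example1} the module $\ctH(1,R)$ is free of rank one, generated by $\tE$, and by Lemma~\ref{lem-TU} the operator $T_\fp$ preserves $\ctH(1,R)$; hence $\tE|T_\fp$ is automatically a scalar multiple of $\tE$, say $\tE|T_\fp=\lambda_\fp\tE$. Since $\tE$ is given by the integral formulas of Example~\ref{example1} and $T_\fp=U_\fp+B_\fp$ is defined by expressions with integer coefficients, the cochain $\tE|T_\fp$ is $\Z$-valued and $\lambda_\fp$ is an integer independent of $R$; it therefore suffices to compute $\lambda_\fp$ over one convenient coefficient ring, say $R=\C$, after which $\tE|T_\fp=\lambda_\fp\tE$ holds over every $R$ by base change along $\Z\to R$.

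To compute $\lambda_\fp$ I would compare the constant Fourier coefficients of the two sides. First I would check that $\tE$ has vanishing non-constant Fourier coefficients, i.e. $\tE^\ast(\fm)=0$ for all $\fm$: as $\tE$ is pulled back from the half-line $\GL_2(A)\bs\sT$ of Figure~\ref{Fig1}, its value on a positively oriented edge $\begin{pmatrix}\pi_\infty^k & u\\ 0 & 1\end{pmatrix}$ of \eqref{eq-setM} depends only on $k$ and not on $u$, so the character sum defining $\tE^\ast(\fm)$ vanishes by orthogonality of the non-trivial character $\eta$. By Lemma~\ref{lemFH} the remaining coefficient satisfies $\tE^0(\pi_\infty^k)=\tE^0(1)q^{-k}$, and $\tE^0(1)\neq 0$.

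Next I would evaluate $T_\fp=U_\fp+B_\fp$ on the constant coefficient. Lemma~\ref{lem_new14} together with Lemma~\ref{lemFH} gives $(\tE|B_\fp)^0(\pi_\infty^k)=\tE^0(\pi_\infty^{k-\deg\fp})=|\fp|\,\tE^0(\pi_\infty^k)$. For $\tE|U_\fp=\sum_{\deg b<\deg\fp}\tE|B_\fp^{-1}S_b$ I would evaluate directly on the edge $e=\begin{pmatrix}\pi_\infty^k & 0\\ 0 & 1\end{pmatrix}$: normalizing the coset $B_\fp^{-1}S_b\, e$ modulo $Z(\Fi)\cI_\infty$ into the standard form of \eqref{eq-setM} shows that it represents the positively oriented edge $\begin{pmatrix}\pi_\infty^{k+\deg\fp} & b/\fp\\ 0 & 1\end{pmatrix}$, on which $\tE$ takes the value $\tE^0(\pi_\infty^{k+\deg\fp})=|\fp|^{-1}\tE^0(\pi_\infty^k)$ by the previous paragraph. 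Summing over the $|\fp|$ admissible $b$ gives $(\tE|U_\fp)(e)=\tE^0(\pi_\infty^k)$, hence $(\tE|T_\fp)^0(1)=(1+|\fp|)\tE^0(1)$ and $\lambda_\fp=1+|\fp|$. Finally, the equality $\ctE(1,R)=\ctH(1,R)$ is immediate: every prime $\fp$ is coprime to the unit ideal, so the relation just proved says $\tE\in\ctE(1,R)$, and since $\ctH(1,R)=R\tE$ every pseudo-harmonic cochain of level $1$ is Eisenstein.

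The step I expect to be the main obstacle is the $U_\fp$-computation on the constant coefficient, and inside it the claim that $\tE$ is independent of $u$ on $E(\sT)^+$ (equivalently $\tE^\ast\equiv 0$). This rests on understanding the fibers of the reduction $E(\sT)^+\to E(\GL_2(A)\bs\sT)$ --- that for a fixed $k$ all edges $\begin{pmatrix}\pi_\infty^k & u\\ 0 & 1\end{pmatrix}$ map to the same edge of the half-line --- together with the careful normalization of $B_\fp^{-1}S_b\,e$ into the form \eqref{eq-setM}; the rest of the argument is formal.
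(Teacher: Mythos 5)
Your reduction to a single eigenvalue (freeness of $\ctH(1,R)=R\tE$, stability under $T_\fp$ via Lemma~\ref{lem-TU}, integrality of $\lambda_\fp$) is exactly the paper's first step. But the computation of $\lambda_\fp$ rests on a false claim: it is \emph{not} true that $\tE^\ast(\fm)=0$ for all $\fm$, nor that $\tE$ restricted to $E(\sT)^+$ depends only on $k$. The fibers of $E(\sT)^+\to E(\GL_2(A)\bs\sT)$ are not the sets of fixed $k$: for example, with $k=2$ the edge $\left(\begin{smallmatrix}\pi_\infty^2 & 0\\ 0 & 1\end{smallmatrix}\right)$ reduces to $\bar{e}_1$ (where $\tE$ takes the value $1+q-q^2$), while $\left(\begin{smallmatrix}\pi_\infty^2 & \pi_\infty\\ 0 & 1\end{smallmatrix}\right)$ reduces to $e_0$ (value $q$). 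Correspondingly the paper computes $\tE^\ast(1)=(1-q^2)/q\neq 0$, and the displayed Fourier expansion of $\tE$ after Lemma~\ref{lemtEEis} has non-vanishing coefficients $\sigma(m)$ at every effective divisor. Your evaluation of $\tE|U_\fp$ at the single edge $u=0$ therefore does not compute the constant coefficient $(\tE|U_\fp)^0(\pi_\infty^k)$ (the two differ by the sum of the non-constant terms, which you have implicitly set to zero), so the identity $(\tE|T_\fp)^0(1)=(1+|\fp|)\tE^0(1)$ is not established by your argument, even though it happens to be true.

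The flaw is repairable in two ways. Either carry out the $U_\fp$-computation on the genuine constant coefficient, i.e.\ average over $u$ as in the definition of $f^0$, which annihilates the non-constant Fourier terms and yields $(\tE|U_\fp)^0(\pi_\infty^k)=\tE^0(\pi_\infty^k)$ without any claim about pointwise values; or do what the paper does and avoid Fourier analysis altogether by evaluating $\tE|T_\fp$ at the well-chosen edge $e_{\deg(\fp)}=\left(\begin{smallmatrix}\fp & 0\\ 0 & 1\end{smallmatrix}\right)$: there $B_\fp e_{\deg(\fp)}=e_{2\deg(\fp)}$ and each $U_\fp$-translate $\left(\begin{smallmatrix}\fp & b\\ 0 & \fp\end{smallmatrix}\right)\sim\left(\begin{smallmatrix}1 & b/\fp\\ 0 & 1\end{smallmatrix}\right)$ lies in $\cI_\infty$, hence equals $e_0$, so all values are read off directly from Example~\ref{example1} and give $q^{2\deg(\fp)+1}+|\fp|\,q=(|\fp|+1)\tE(e_{\deg(\fp)})$. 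Your concluding sentence deducing $\ctE(1,R)=\ctH(1,R)$ from the eigenvalue identity is fine.
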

\begin{proof}
By Lemma \ref{lem-TU}, $\tE|T_\fp\in \ctH(1, R)$, and since $\tE$ spans $\ctH(1, R)$,    
we have $\tE|T_\fp = c \tE$ for some $c\in R$. To find $c$, it is enough to compute the value $\tE|T_\fp$ 
on some $e_i$; it is convenient to take $i=\deg(\fp)$:
$$
\tE|T_\fp (e_{\deg(\fp)})= \tE\left(\begin{pmatrix} \fp^2 & 0 \\ 0 & 1\end{pmatrix}\right) 
+\sum_{\deg(b)<\deg(\fp)} \tE\left(\begin{pmatrix} 1 & b \\ 0 & \fp\end{pmatrix}\begin{pmatrix} \fp & 0 \\ 0 & 1\end{pmatrix}\right)
$$
$$
=\tE(e_{2\deg(\fp)}) + |\fp| \tE(e_{0})= q^{2\deg(\fp)+1}+q^{\deg(\fp)+1}=(|\fp|+1)\tE(e_{\deg(\fp)}). 
$$
\end{proof}

We will need to know the Fourier coefficient of $\tE$, which can be computed as follows. First, $$\tE^0(1)=\tE(e_0)=q.$$ Next, 
using (\ref{eq-Fexp}) and Lemma \ref{lemFH}, we compute
\begin{align*}
\tE^\ast(1) & =-\left(\tE\left(\begin{pmatrix} \pi_\infty^2 & \pi_\infty \\ 0 & 1\end{pmatrix}\right)- \tE^0(\pi_\infty^2)\right)\\ 
&=
-\left(\tE\left(\begin{pmatrix} \pi_\infty^2 & \pi_\infty \\ 0 & 1\end{pmatrix}\right)
- q^{-2}\tE\left(\begin{pmatrix} 1 & 0 \\ 0 & 1\end{pmatrix}\right)\right) \\
&= -\left(\tE(e_0)- q^{-2}\tE(e_0)\right) \\
&= \frac{1-q^2}{q}. 
\end{align*}
Finally, since $\tE$ is an eigenfunction of $T_\fp$ with eigenvalue $|\fp|+1$ for all prime $\fp$, any $f^\ast(\fm)$ can be computed from $f^\ast(1)$; 
see Lemma \ref{lem1.15fm}. We get 
for $k \in \Z$ and $u \in F_\infty$,
$$\widetilde{E}\begin{pmatrix} \pi_\infty^k & u \\ 0 & 1\end{pmatrix}
= q^{-k+1}\left(1 + (1-q^2)\sum_{m \in A, \atop \deg m + 2 \leq k} \sigma(m) \eta(m u)\right),$$
where $$\sigma(m) := \sum_{\text{monic } m' \in A, \atop m' \mid m} |m'|.$$

\begin{rem}\label{remH}
In \cite{Improper}, Gekeler introduced the so-called improper Eisenstein series $H$ on $\sT$, 
as the analogue of the classical Eisenstein series $E_2(z)=\sum_{c,d\in \Z}'(cz+d)^{-2}$. 
This function $H$ is a $\G_\infty$-invariant 
harmonic cochain on $E(\sT)$, which is ``half'' invariant under $\GL_2(A)$ in the sense 
that $H(\gamma e)=H(e)$ if and only if both $e$ and $\gamma e$ are positively or negatively oriented. 
It can be seen from \cite[p. 383]{Improper} that $\tE(e)=q\cdot H(e)$ for all $e\in E(\sT)^+$, although these functions 
are distinct (for example, $\tE$ is $\GL_2(A)$-invariant and is not alternating).  
\end{rem}

\begin{notn}\label{notn2.3}
From now on $\fn=\fp_1\cdots \fp_s$ is a square-free ideal with the given prime decomposition. Let 
\begin{align*}
\beps_{H(\fn)} &:=((-1)^{\deg(\fp_1)}, \dots, (-1)^{\deg(\fp_s)}) \in \E, \\ 
\bone &:=(1,1,\dots, 1)\in \E.
\end{align*}
Also, for a given $\beps=(\eps_1, \dots, \eps_n)\in \E$ and a divisor $\fd \mid \fn$, put  
$$
\epsilon_\fd = \prod_{\fp_i\mid \fd}\eps_i\in \{\pm 1\}.
$$
(In particular, $\epsilon_{\fp_i}=\eps_i$ and $\epsilon_1=1$.) 
\end{notn}

\begin{prop}\label{prop-cor2.4}  
Let $R$ be a coefficient ring with $(q-1)\in R^\times$. Given $\beps\in \E$, we have 
$$
\{f\in \ctH(\fn, R)^\beps\ |\ f^\ast(\fm)=0 \text{ when } (\fm, \fn)=1\}=
\begin{cases}
\cH(1, R) & \text{if }\beps=\beps_{H(\fn)},\\
0 & \text{otherwise}. 
\end{cases}
$$
\end{prop}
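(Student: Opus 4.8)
The plan is to prove the two inclusions separately, using Proposition \ref{propALOhta} as the engine for the forward containment and an explicit computation with $\tE$ from Example \ref{example1} for the reverse one. Let $f$ belong to the left-hand side. The hypothesis $f^\ast(\fm)=0$ whenever $(\fm,\fn)=1$ says precisely that $f^\ast(\fm)=0$ unless $\fm$ is divisible by one of $\fp_1,\dots,\fp_s$, which is the hypothesis of Proposition \ref{propALOhta}; and since $f\in\ctH(\fn,R)^\beps$ we have $f|W_{\fp_i}=\eps_i f$ for every $i$, so each $W_{\fp_i}$ acts as $\pm1$. I would then apply Proposition \ref{propALOhta} repeatedly, peeling off one prime at a time. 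Its proof uses only the operators $K_{\fp_i}$ and $W_{\fp_j}$ for the primes one wishes to eliminate, together with $\fp_j\parallel(\text{level})$, so it applies verbatim to any sub-collection of the $\fp_i$; hence each step removes one more prime from the set of allowed divisors of $\fm$, and after $s$ steps one gets $f^\ast(\fm)=0$ for \emph{every} ideal $\fm\lhd A$.

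Next I would identify such an $f$. By Lemma \ref{lem2.3FT} a cochain with all $f^\ast(\fm)=0$ is determined by the single value $f^0(1)$, and by the Fourier expansion (\ref{eq-Fexp}) together with Lemma \ref{lemFH}(i) it takes the value $f^0(1)q^{-k}$ on every positively oriented edge $\begin{pmatrix}\pi_\infty^k & u\\ 0 & 1\end{pmatrix}$, independently of $u$. The crux is to extract the relation $(q+1)f^0(1)=0$. For this I would feed a negatively oriented edge through the reduction in the proof of Lemma \ref{lem2.3FT}: a negative edge sitting over level $r$ is $\G_0(\fn)$-equivalent to a positive edge of level $(1-r)+2\deg(c_1\fn)$, where $\deg(c_1\fn)$ is the degree of the denominator produced by the approximation theorem. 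Evaluating $f$ on this edge in two ways — through $\G_0(\fn)$-invariance and through the constancy of $f(e)+f(\bar e)$ of Lemma \ref{lem 2.1FT} — and noting that for fixed $r$ this denominator degree can be made to change by exactly one (hence the level by two), forces $f^0(1)(1-q^2)=0$, i.e. $(q+1)f^0(1)=0$ since $(q-1)\in R^\times$. Granting this, $f^0(1)/q\in R[q+1]$, so $\tfrac{f^0(1)}{q}\tE\in\cH(1,R)=R[q+1]\tE$ has all starred coefficients zero (because $\tE^\ast(1)=(1-q^2)/q$ is $(q+1)$ times a unit) and the same constant coefficient as $f$; by Lemma \ref{lem2.3FT} it equals $f$. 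Thus every $f$ on the left-hand side lies in $\cH(1,R)$.

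It remains to compute how $W_{\fp_i}$ acts on $\cH(1,R)\subseteq\ctH(\fn,R)$. Writing $\fn'=\fn/\fp_i$, the matrix $\gamma:=W_{\fp_i}B_{\fp_i}^{-1}=\begin{pmatrix} a & b\\ c\fn' & d\fp_i\end{pmatrix}$ has unit determinant, hence lies in $\GL_2(A)$; since $\tE$ is $\GL_2(A)$-invariant this gives $\tE|W_{\fp_i}=\tE|B_{\fp_i}$. For $h=\alpha\tE$ with $(q+1)\alpha=0$, both $h|W_{\fp_i}=\alpha(\tE|B_{\fp_i})$ and $(-1)^{\deg\fp_i}h$ have all starred coefficients zero and, by Lemma \ref{lem_new14} together with $q^{\deg\fp_i}\equiv(-1)^{\deg\fp_i}\pmod{q+1}$, the same constant coefficient; so by Lemma \ref{lem2.3FT} they coincide. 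Hence $W_{\fp_i}$ acts on $\cH(1,R)$ by $(-1)^{\deg\fp_i}=(\beps_{H(\fn)})_i$, that is, $\cH(1,R)\subseteq\ctH(\fn,R)^{\beps_{H(\fn)}}$. This simultaneously yields the reverse inclusion when $\beps=\beps_{H(\fn)}$ — an element of $\cH(1,R)$ is automatically $\G_0(\fn)$-invariant with all starred coefficients zero — and shows that if $\beps\neq\beps_{H(\fn)}$ then any $f$ on the left-hand side satisfies $\eps_i f=(-1)^{\deg\fp_i}f$ with $\eps_i\neq(-1)^{\deg\fp_i}$ for some $i$; as $2\in R^\times$ this forces $f=0$.

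The main obstacle is the middle step, the identity $(q+1)f^0(1)=0$ for an all-starred-coefficients-zero cochain. This is exactly where the geometry of $\G_0(\fn)\bs\sT$ intervenes: one must track the levels of the positive edges onto which the negative edges are folded and verify that these levels vary by a nonzero even amount, so that the resulting torsion relation involves $q^2-1$ rather than $q-1$. Once this relation is secured, everything else reduces to bookkeeping with Fourier coefficients and with the involutions $W_{\fp_i}$.
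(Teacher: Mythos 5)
Your proposal is correct, and its first and last steps coincide with the paper's: you iterate Proposition \ref{propALOhta} over the prime divisors to kill every $f^\ast(\fm)$ (the paper does the same, implicitly using that the proof of that proposition applies to any sub-collection of the $\fp_i$), and you compute the Atkin--Lehner action on $\cH(1,R)=R[q+1]\tE$ via $W_{\fp_i}B_{\fp_i}^{-1}\in\GL_2(A)$, which is essentially the paper's use of Lemma \ref{lemAL2} (2). The genuine divergence is the middle step. The paper applies Lemma \ref{lemLevelLow} successively to produce $g=f|W_\fn=\epsilon_\fn f\in\ctH(1,R)=R\,\tE$, so that $f=a\tE$ for some $a\in R$, and only then extracts $a(q+1)=0$ from $f^\ast(1)=a(1-q^2)/q=0$; this reuses a lemma already proved and costs nothing extra. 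You instead derive the torsion relation $(q+1)f^0(1)=0$ directly from the geometry of $\G_0(\fn)\bs\sT$ and then identify $f$ with $\frac{f^0(1)}{q}\tE$ by matching Fourier coefficients via Lemma \ref{lem2.3FT}; this has the mild advantage of not invoking the eigenform hypothesis at this stage, but it rests on a fact not proved in the paper: that a single edge of $\sT$ is carried by two elements of $\G_0(\fn)$ to positively oriented edges whose levels differ by exactly $2$. You rightly flag this as the crux and only sketch it. It does hold and can be completed along the lines you indicate: fix one admissible approximant $\alpha=d/(c_1\fn)$ as in the proof of Lemma \ref{lem2.3FT} and replace it by $\alpha+1/c'$ with $c'$ irreducible, coprime to $c_1\fn$, and of degree at least $\max(r,1)$ so that $\ord_\infty(1/c')\geq r$; the new denominator has degree $\deg(c_1\fn)+\deg c'$, so increasing $\deg c'$ by one moves the level of the image edge by two, and evaluating $f$ (whose value on a positive edge of level $k$ is $f^0(1)q^{-k}$ once all starred coefficients vanish) on the two $\G_0(\fn)$-equivalent images gives $f^0(1)(q^2-1)=0$. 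With that verification written out your argument is complete; without it the middle step is an assertion, and the paper's route through Lemma \ref{lemLevelLow} reaches the same conclusion with less work.
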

\begin{proof} 
Let $f\in \ctH(\fn, R)^\beps$ and assume $f^\ast(\fm)=0$ for all $\fm$ coprime to $\fn$. Then Proposition \ref{propALOhta} 
implies that $f^\ast(\fm)=0$ for all $\fm$. In particular, $f^\ast(1)=0$. 
Applying Lemma \ref{lemLevelLow} successively we get that there is $g\in \ctH(1, R)$ 
such that $f=g|\prod_{i=1}^s B_{\fp_i}$ and $\epsilon_\fn f=f|W_\fn = g\in \ctH(1, R)$. Therefore, 
there exists $a\in R$ such that $f=a\tE$. This implies 
$$
f^\ast(1)=a\tE^\ast(1)=\frac{a(1-q^2)}{q}=0. 
$$
Since $(q-1)$ is invertible in  $R$, we must have $a\in R[q+1]$. From Example \ref{example1} we conclude that $f\in \cH(1, R)$ is harmonic. 
Now it is enough to show that $a\tE|W_{\fp_i}=(-1)^{\deg(\fp_i)}a\tE$ for all $1\leq i\leq s$ if $a\in R[q+1]$. 
For that, let $\fp|\fn$ be a prime. By Lemma \ref{lemAL2} (2), $a\tE|B_\fp W_\fp=a\tE$. On the other hand, 
\begin{align*}
a\tE|B_\fp\left(\begin{pmatrix}\pi_\infty^k & u \\ 0 &1\end{pmatrix}\right) 
& = a\tE \left(\begin{pmatrix}\pi_\infty^{k-\deg(\fp)} & u \fp \\ 0 &1\end{pmatrix}\right)=aq^{k-\deg(\fp)+1}=a(-1)^{k-\deg(\fp)+1} \\
&= (-1)^{\deg(\fp)} a\tE \left(\begin{pmatrix}\pi_\infty^k & u \\ 0 &1\end{pmatrix}\right). 
\end{align*}
Hence $a\tE|W_\fp= (-1)^{\deg(\fp)}a\tE|B_\fp W_\fp=(-1)^{\deg(\fp)}a\tE$, as was required to be shown.  
\end{proof}

\begin{notn} For $\beps \in \E$, let 
\begin{align*}
E^\beps &:=\frac{1}{\nu(\beps)}\tE|\prod_{i=1}^s(1+\eps_i W_{\fp_i})=\frac{1}{\nu(\beps)}\tE|\prod_{i=1}^s(1+\eps_i B_{\fp_i})\\ 
&=\frac{1}{\nu(\beps)}\sum_{\fd|\fn}\epsilon_\fd \tE|B_\fd \in \ctE(\fn, \Q),  
\end{align*}
where 
$$\nu (\beps) = \begin{cases}
1 & \text{ if $\beps = \beps_{H(\fn)}$, } \\
q+1 & \text{ otherwise.}
\end{cases}$$
(The fact that $E^\beps$ is Eisenstein follows from Lemma \ref{lemtEEis} and the commutativity $B_\fd T_\fp=T_\fp B_\fd$, $\fp\nmid \fd$.) 
Let
$$
N(\fn, \beps):=\prod_{i=1}^s(1+\eps_i|\fp_i|). 
$$
\end{notn}
\begin{rem}\label{remNnu}
Note that $q+1$ divides $N(\fn, \beps)$ if $\beps\neq \beps_{H(\fn)}$. In particular, $N(\fn, \beps)/\nu(\beps)\in \Z$. 
Also note that $(q+1, N(\fn, \beps_{H(\fn)})) = (q+1,2^s) = 2^b$ for some $0\leq b\leq s$, since
\begin{align*}
N(\fn, \beps_{H(\fn)}) &= \prod_{i=1}^s(1+(-1)^{\deg(\fp_i)}|\fp_i|)\\ & \equiv \prod_{i=1}^s(1+(-1)^{\deg(\fp_i)}(-1)^{\deg(\fp_i)})=2^s\ (\mod\ q+1). 
\end{align*}
\end{rem}

\begin{lem}\label{lemEeps}
We have $E^\beps\in \ctE(\fn, \Z)^\beps$. Moreover, $E^\beps$ is harmonic if and only if $\beps\neq \bone$. 
\end{lem}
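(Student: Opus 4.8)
The plan is to check the four ingredients bundled into the phrase $E^\beps\in\ctE(\fn,\Z)^\beps$ (pseudo-harmonicity, $\G_0(\fn)$-invariance, the Eisenstein property, the $\beps$-eigenvector property, and integrality of the values), and then to settle harmonicity by directly evaluating $E^\beps(e)+E^\beps(\bar e)$.

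Most of these are formal. Each $\tE|B_\fd$ is pseudo-harmonic, and by Lemma~\ref{lemAL2}(1) it is $\G_0(\fd)$-invariant; since $\fd\mid\fn$ forces $\G_0(\fn)\subseteq\G_0(\fd)$, it is $\G_0(\fn)$-invariant. Hence $E^\beps\in\ctH(\fn,\Q)$, and it is Eisenstein by the remark following its definition. That $W_{\fp_j}$ acts by $\eps_j$ is pure operator algebra: the $W_{\fp_i}$ are commuting involutions, so $\big(1+\eps_jW_{\fp_j}\big)W_{\fp_j}=\eps_j\big(1+\eps_jW_{\fp_j}\big)$, and right multiplication of $\prod_{i}(1+\eps_iW_{\fp_i})$ by $W_{\fp_j}$ reproduces it up to the scalar $\eps_j$; thus $E^\beps|W_{\fp_j}=\eps_jE^\beps$.

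The substantive step is integrality, i.e.\ divisibility of $\sum_{\fd\mid\fn}\epsilon_\fd\,\tE|B_\fd$ by $\nu(\beps)$. This is vacuous when $\beps=\beps_{H(\fn)}$, so I assume $\beps\neq\beps_{H(\fn)}$ and reduce modulo $q+1$. In the explicit Fourier expansion of $\tE$ the coefficient $1-q^2$ is $\equiv 0\pmod{q+1}$, which kills every non-constant term, leaving
$$
\tE\begin{pmatrix}\pi_\infty^k & u\\ 0 & 1\end{pmatrix}\equiv q^{-k+1}\equiv(-1)^{k+1}\pmod{q+1}\qquad\text{on }E(\sT)^+.
$$
Exactly as in the proof of Proposition~\ref{prop-cor2.4}, $B_\fd$ sends the positively oriented edge $\begin{pmatrix}\pi_\infty^k & u\\0&1\end{pmatrix}$ to $\begin{pmatrix}\pi_\infty^{k-\deg\fd} & \fd u\\0&1\end{pmatrix}\in E(\sT)^+$, whence $\tE|B_\fd\equiv(-1)^{\deg\fd}\tE\pmod{q+1}$ on $E(\sT)^+$. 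Summing over divisors and factoring,
$$
\sum_{\fd\mid\fn}\epsilon_\fd\,\tE|B_\fd\equiv\Big(\prod_{i=1}^s\big(1+\eps_i(-1)^{\deg\fp_i}\big)\Big)\tE\pmod{q+1}\quad\text{on }E(\sT)^+.
$$
Each factor vanishes precisely when $\eps_i\neq(-1)^{\deg\fp_i}$, so the product is $0$ exactly when $\beps\neq\beps_{H(\fn)}$, i.e.\ exactly when $\nu(\beps)=q+1$. For a negatively oriented edge $\bar e$ I use $(\tE|B_\fd)(\bar e)=(q+1)-(\tE|B_\fd)(e)$, which follows from Lemma~\ref{lem 2.1FT} and Example~\ref{example1} together with $B_\fd\bar e=\overline{B_\fd e}$; thus $\sum_{\fd}\epsilon_\fd(\tE|B_\fd)(\bar e)\equiv-\sum_{\fd}\epsilon_\fd(\tE|B_\fd)(e)\equiv 0\pmod{q+1}$. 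Hence $\nu(\beps)$ divides $\sum_{\fd}\epsilon_\fd\,\tE|B_\fd$ and $E^\beps\in\ctE(\fn,\Z)^\beps$.

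For harmonicity, the identity $\tE(g)+\tE(\bar g)=q+1$ for all edges $g$ (Lemma~\ref{lem 2.1FT}, Example~\ref{example1}) together with $B_\fd\bar e=\overline{B_\fd e}$ gives
$$
E^\beps(e)+E^\beps(\bar e)=\frac{1}{\nu(\beps)}\sum_{\fd\mid\fn}\epsilon_\fd\big(\tE(B_\fd e)+\tE(B_\fd\bar e)\big)=\frac{q+1}{\nu(\beps)}\sum_{\fd\mid\fn}\epsilon_\fd=\frac{q+1}{\nu(\beps)}\prod_{i=1}^s(1+\eps_i).
$$
The product $\prod_i(1+\eps_i)$ equals $2^s$ when $\beps=\bone$ and $0$ otherwise, while $\frac{q+1}{\nu(\beps)}>0$; so $E^\beps$ is harmonic exactly when $\beps\neq\bone$. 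The one step demanding care is the integrality computation, where one must correctly reduce $\tE$ modulo $q+1$ on positively oriented edges and track the $B_\fd$-action on edge representatives; the harmonicity and eigenvector parts are then immediate from the relation $\tE(g)+\tE(\bar g)=q+1$ and the Atkin–Lehner formalism.
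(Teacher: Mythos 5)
Your proof is correct, and its harmonicity and eigenvector parts coincide with the paper's own argument (the same evaluation of $E^\beps(e)+E^\beps(\bar e)$ via Lemma~\ref{lem 2.1FT} and Example~\ref{example1}, and the same operator identity $\prod_j(1+\eps_jW_{\fp_j})W_{\fp_i}=\eps_i\prod_j(1+\eps_jW_{\fp_j})$). Where you genuinely diverge is the integrality step. The paper argues by localization at two coprime moduli: $E^\beps$ visibly takes values in $\Z[\nu(\beps)^{-1}]$, while its Fourier coefficients lie in $\Z[p^{-1}]$, so by the Fourier expansion its values on $E(\sT)^+$ --- and then on all of $E(\sT)$ via the constant $E^\beps(e)+E^\beps(\bar e)$ --- lie in $\Z[p^{-1}]$; since $(p,\nu(\beps))=1$ the two containments force values in $\Z$. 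You instead prove the divisibility $(q+1)\mid\sum_{\fd\mid\fn}\epsilon_\fd\,\tE|B_\fd$ directly, by reducing the explicit expansion of $\tE$ modulo $q+1$ so that $1-q^2$ kills the non-constant terms and the surviving constant $q^{1-k}\equiv(-1)^{k+1}$ transforms by $(-1)^{\deg\fd}$ under $B_\fd$; the resulting factor $\prod_i\bigl(1+\eps_i(-1)^{\deg\fp_i}\bigr)$ vanishes exactly when $\beps\neq\beps_{H(\fn)}$, i.e.\ exactly when $\nu(\beps)=q+1$. This is more explicit, and the congruence $\tE|B_\fd\equiv(-1)^{\deg\fd}\tE$ it produces is essentially the computation the paper performs inside Proposition~\ref{prop-cor2.4}; the paper's route is shorter and sidesteps the one point in your argument that needs to be said out loud, namely that the reduction modulo $q+1$ of an expression involving the $p$-th roots of unity $\eta(mu)$ is first carried out in $\Z[\zeta_p]/(q+1)$ (legitimate since $p\nmid q+1$) and then descends to a congruence of integers because $(q+1)\Z[\zeta_p]\cap\Z=(q+1)\Z$. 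With that sentence added, your proof is a complete and valid alternative.
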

\begin{proof}
Since $\tE$ is $\Z$-valued, $E^\beps$ takes values in $\Z[\nu(\beps)^{-1}]$. On the other hand, from the 
Fourier expansion of $\tE$ and Lemma \ref{lem_new14}, one computes 
\begin{equation}\label{eqE(1)}
(E^{\beps})^0(1) = q \frac{N(\fn,\beps)}{\nu(\beps)} \quad \text{ and } \quad (E^{\beps})^*(1) = \frac{1-q^2}{q\nu(\beps)},
\end{equation}
and also that $(E^{\beps})^*(\fm)\in \Z[p^{-1}]$ for all $\fm$. The Fourier expansion (\ref{eq-Fexp}) then implies that $E^\beps$ on $E(\sT)^+$
takes values in $\Z[p^{-1}]$; cf. \cite[Cor. 3.15]{Analytical}. 

Next, using the definition of $E^\beps$ and Example \ref{example1}, we have 
\begin{align*}
E^\beps(e_0) &=\frac{1}{\nu(\beps)}\sum_{\fd|\fn}\epsilon_\fd \tE|B_\fd(e_0) \\ &= \frac{1}{\nu(\beps)}\sum_{\fd|\fn}\epsilon_\fd \tE(e_{\deg(\fd)}) 
= \frac{q}{\nu(\fn)}\sum_{\fd|\fn}\epsilon_\fd |\fd|=\frac{q}{\nu(\fn)}N(\fn, \beps),  
\end{align*}
and similarly 
$$
E^\beps(\bar{e}_0)=\frac{q+1}{\nu(\fn)}\sum_{\fd|\fn}\epsilon_\fd - \frac{q}{\nu(\fn)}N(\fn, \beps) = \frac{q+1}{\nu(\beps)}\prod_{i=1}^s(1+\eps_{i}) 
- \frac{q}{\nu(\fn)}N(\fn, \beps). 
$$
Therefore, by Lemma \ref{lem 2.1FT}, for any $e\in E(\sT)$ we have 
\begin{equation}\label{eqE+E}
E^{\beps}(e) + E^{\beps}(\bar{e})= E^{\beps}(e_0) + E^{\beps}(\bar{e}_0) = \frac{q+1}{\nu(\beps)}\prod_{i=1}^s(1+\eps_{i}).
\end{equation}
This shows that $E^\beps$ is harmonic if and only if $\beps\neq \bone$, and that $E^\beps(e)\in \Z[p^{-1}]$ 
for all $e\in E(\sT)$. Since $p$ and $\nu(\beps)$ are coprime, we conclude that $E^\beps$ takes its values in $\Z$. 

It remains to show that $E^\beps|W_{\fp_i}=\eps_i E^\beps$, $1\leq i\leq s$. This follows from the definition of $E^\beps$ 
and the following observation: 
$$
\prod_{j=1}^s(1+\eps_j W_{\fp_j})W_{\fp_i} = (W_{\fp_i}+\eps_i)\prod_{j\neq i}(1+\eps_j W_{\fp_j})=\eps_i \prod_{j=1}^s(1+\eps_j W_{\fp_j}). 
$$
\end{proof}

The previous lemma allows us to consider $E^\beps$ as an element of $\tE(\fn, R)^\beps$ for any ring $R$. 

\begin{prop}\label{prop 3.2FT}
Let $R$ be a coefficient ring with $(q-1) \in R^\times$. Assume $\beps \neq \beps_{H(\fn)}$. Then
$$\ctE(\fn,R)^{\beps} = R\cdot E^{\beps},$$
and 
$$\cE_0(\fn,R)^{\beps} = 
\begin{cases}
R\left[\frac{N(\fn,\beps)}{q+1}\right] \cdot E^{\beps}, & \text{if $\beps\neq \bone$};\\ 
0, & \text{if $\beps=\bone$}. 
\end{cases}
$$
\end{prop}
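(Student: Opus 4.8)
The plan is to treat the two claims separately: first pin down the full eigenspace $\ctE(\fn,R)^\beps$, and then carve out the harmonic, cuspidal part inside $R\cdot E^\beps$.

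\textbf{Step 1: $\ctE(\fn,R)^\beps=R\cdot E^\beps$.} The inclusion $R\cdot E^\beps\subseteq\ctE(\fn,R)^\beps$ is immediate from Lemma~\ref{lemEeps}. For the reverse inclusion, the crucial point is that, since $\beps\neq\beps_{H(\fn)}$ gives $\nu(\beps)=q+1$, the first Fourier coefficient
$$(E^\beps)^\ast(1)=\frac{1-q^2}{q(q+1)}=\frac{1-q}{q}$$
from~(\ref{eqE(1)}) is a unit in $R$ (because $q\in R^\times$ as $p\in R^\times$, and $q-1\in R^\times$). So, given any $f\in\ctE(\fn,R)^\beps$, I would form $g:=f-\big(f^\ast(1)/(E^\beps)^\ast(1)\big)E^\beps\in\ctE(\fn,R)^\beps$, which satisfies $g^\ast(1)=0$. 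Being Eisenstein, $g$ then has $g^\ast(\fm)=0$ for every $\fm$ coprime to $\fn$ by Lemma~\ref{lem1.15fm}, so Proposition~\ref{prop-cor2.4} (applicable precisely because $\beps\neq\beps_{H(\fn)}$) forces $g=0$. Hence $f\in R\cdot E^\beps$.

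\textbf{Step 2: the harmonic condition.} Since $\cE_0(\fn,R)^\beps=R\cdot E^\beps\cap\cH_0(\fn,R)$, I must decide for which $a\in R$ the cochain $aE^\beps$ is both harmonic and cuspidal. By~(\ref{eqE+E}),
$$aE^\beps(e)+aE^\beps(\bar e)=a\,\frac{q+1}{\nu(\beps)}\prod_{i=1}^s(1+\eps_i),$$
so $aE^\beps$ is harmonic iff this constant vanishes. If $\beps=\bone$ the product equals $2^s$, and since $2\in R^\times$ (as required for the $\beps$-eigenspaces to be defined) this forces $a=0$, whence $\cE_0(\fn,R)^\bone=0$. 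If $\beps\neq\bone$, some $\eps_i=-1$ kills the product, so every $aE^\beps$ is automatically harmonic and the problem reduces to cuspidality.

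\textbf{Step 3: the cuspidal condition.} The heart of the matter is to show that cuspidality is governed solely by the constant Fourier coefficients at the cusps. Far out along the half-line of the cusp $\infty$, whose positively oriented edges are images of $\begin{pmatrix}\pi_\infty^k&0\\0&1\end{pmatrix}$ with $k\to-\infty$, the non-constant terms of the expansion~(\ref{eq-Fexp}) drop out (the sum over $\deg(m)\leq k-2$ is empty) and Lemma~\ref{lemFH} gives the value $a(E^\beps)^0(1)\,q^{-k}$; hence $aE^\beps$ vanishes near $\infty$ iff $a(E^\beps)^0(1)=0$. For an arbitrary cusp I would invoke Lemma~\ref{lemCusps}: $\W$ acts transitively on the $2^s$ cusps, each being $W_\fd(\infty)$ for some $\fd\mid\fn$, and the constant term there is $(aE^\beps\,|\,W_\fd)^0(1)=\epsilon_\fd\,a(E^\beps)^0(1)$ because $E^\beps|W_\fd=\epsilon_\fd E^\beps$. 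Thus all constant terms vanish simultaneously, and using $(E^\beps)^0(1)=qN(\fn,\beps)/(q+1)$ together with $q\in R^\times$ and $\tfrac{N(\fn,\beps)}{q+1}\in\Z$ (Remark~\ref{remNnu}), the cochain $aE^\beps$ is cuspidal iff $a\cdot\tfrac{N(\fn,\beps)}{q+1}=0$, i.e.\ iff $a\in R\big[\tfrac{N(\fn,\beps)}{q+1}\big]$. This gives $\cE_0(\fn,R)^\beps=R\big[\tfrac{N(\fn,\beps)}{q+1}\big]\cdot E^\beps$ for $\beps\neq\bone$.

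I expect Steps 1 and 2 to be routine, resting on the unit $(E^\beps)^\ast(1)$ with Proposition~\ref{prop-cor2.4} and on the explicit value~(\ref{eqE+E}). The main obstacle is Step 3: justifying rigorously that cuspidality is detected by the vanishing of the constant term at each cusp (and not by the higher coefficients), and that the $W_\fd$-eigenfunction property collapses the $2^s$ a priori distinct cusp conditions into the single divisibility statement $a\cdot\tfrac{N(\fn,\beps)}{q+1}=0$.
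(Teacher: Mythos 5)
Your proof is correct and follows essentially the same route as the paper: subtract the multiple of $E^{\beps}$ determined by the unit $(E^{\beps})^\ast(1)=q^{-1}(1-q)$ and invoke Proposition \ref{prop-cor2.4} to get $\ctE(\fn,R)^{\beps}=R\cdot E^{\beps}$, then read off harmonicity from (\ref{eqE+E}) and cuspidality from the vanishing of the constant Fourier coefficient. The only divergence is that the criterion you labor over in Step 3 --- that a $\W$-eigenform in $\cH(\fn,R)$ is cuspidal if and only if $f^0(1)=0$ --- is not re-derived in the paper but simply quoted from \cite[Lem.\ 2.19]{PW1}, and your sketch of it (constant term along the half-line at $[1]$, transported to the other cusps by the simply transitive $\W$-action) is the standard argument.
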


\begin{proof}
Since $\beps \neq \beps_{H(\fn)}$, from (\ref{eqE(1)}) we get $(E^{\beps})^*(1) = q^{-1}(1-q) \in R^{\times}$.
Given $f \in \widetilde{\cE}(\fn,R)^{\beps}$, let   
$$ \tilde{f}:= f - \frac{qf^*(1)}{1-q}E^{\beps}.$$
Then $\tilde{f} \in \widetilde{\cH}(\fn,R)^{\beps}$ has $\tilde{f}^*(\fm) = 0$ for every $\fm$ coprime to $\fn$. 
From Proposition \ref{prop-cor2.4} we get $\tilde{f} = 0$, i.e.,
\begin{equation}\label{eqSpanE}
f = \frac{q f^*(1)}{1-q}E^{\beps}.
\end{equation}
Hence $\ctE(\fn, R)^\beps$ is spanned by $E^\beps$. 

To prove the second claim, assume $f\in \cE_0(\fn,R)^{\beps}$. In particular, $f$ is alternating, so from (\ref{eqSpanE}) and (\ref{eqE+E}) we get 
$$
0=f(\bar{e}) + f(e) = \frac{q}{1-q}\left(\prod_{i=1}^s(1+\epsilon_{\fp_i})\right) \cdot f^*(1).
$$
Thus, when $\beps =\bone$, we must have $f^*(1) = 0$; thus, by Proposition \ref{prop-cor2.4}, $f = 0$. 
(Note that $\prod_{i=1}^s(1+\eps_i)=2^s\in R^\times$, since $p$ and $q-1$ are invertible in $R$, and one of these numbers is necessarily even.) 

Finally, assume $\beps\neq \bone$.  
According to \cite[Lem. 2.19]{PW1}, $f\in \cH(\fn,R)$ is cuspidal if and only if $(f|W)^0(1)=0$ for all $W\in \W$. 
Since our $f$ is an eigenfunction of all Atkin-Lehner involutions, we conclude that $f$ is cuspidal if and only if $f^0(1)=0$. 
On the other hand, from (\ref{eqE(1)}) and (\ref{eqSpanE}) we have 
$$f^0(1) = \frac{q^2}{1-q} \cdot \frac{N(\fn,\beps)}{q+1} \cdot f^*(1).$$ 
Since $q(q-1)\in R^\times$, we must have $f^*(1) \in R\left[\frac{N(\fn,\beps)}{q+1}\right]$; thus
$\cE_0(\fn,R)^{\beps} = 
R\left[\frac{N(\fn,\beps)}{q+1}\right] \cdot E^{\beps}$. 
\end{proof}

\begin{rem} \label{rem E_00=E_0}
Assume $R$ is a coefficient ring with $(q-1)\in R^\times$.
\begin{itemize}
\item[(1)] Take $\beps \in \E$ with $\beps\neq \beps_{H(\fn)}$. By (\ref{eqE+E}), $E^\beps\in \ctH(\fn, R)$ is harmonic if and only if $\beps\neq \bone$. Thus, Proposition \ref{prop 3.2FT} implies that 
$\ctE(\fn, R)^\beps=\cE(\fn, R)^\beps$ when $\beps\neq \bone$,  and $\cE(\fn, R)^\beps=0$ when $\beps=\bone$.
\item[(2)] Suppose $\beps_{H(\fn)} \neq \bone$, i.e., there is a prime factor of $\fn$ with odd degree. Then by \cite[Lemma 2.7 (1)]{PW1} we have $\cE_0(\fn, R)^\beps=\cE_{00}(\fn, R)^\beps$ for every $\beps \neq \beps_{H(\fn)}$. 
\end{itemize}
\end{rem}

When $\beps = \beps_{H(\fn)}$, $(E^{\beps})^*(1)$ is not invertible in $R$, so the argument in the proof of Proposition~\ref{prop 3.2FT} does not 
work. We will deal with this case in three separate propositions, after proving the following lemma: 

\begin{lem}\label{lem 3.8}
Define $\beps_{H(\fn),s} = (\eps_1,\dots ,\eps_s) \in \E$ by 
$$
\eps_i =\begin{cases}
(-1)^{\deg \fp_i} & 1\leq i<s;\\ 
 -(-1)^{\deg \fp_s} & i=s.
\end{cases}
$$
\begin{itemize}
\item[(1)] Suppose $\deg \fp_s$ is even. Then $E^{\beps_{H(\fn),s}} | U_{\fp_s} = E^{\beps_{H(\fn),s}}$.
\item[(2)] Suppose $\deg \fp_s$ is odd.  Let $n$ be an integer dividing both $q+1$ and $\deg \fp_s$. Let $R$ be a coefficient ring.  
Then for an arbitrary $a \in R[n]$, we have 
$$a \cdot E^{\beps_{H(\fn),s}} | U_{\fp_s} = -a \cdot E^{\beps_{H(\fn),s}}.$$
\end{itemize}
\end{lem}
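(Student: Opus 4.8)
The plan is to expand $E^{\beps_{H(\fn),s}}$ over the divisors of $\fn$, separate the terms divisible by $\fp_s$ from those coprime to $\fp_s$, and track how $U_{\fp_s}$ shuttles between the two groups. Throughout write $\beps=\beps_{H(\fn),s}$; since $\beps\neq\beps_{H(\fn)}$ we have $\nu(\beps)=q+1$, so by definition $(q+1)E^\beps=\sum_{\fd\mid\fn}\epsilon_\fd\,\tE|B_\fd$. Writing $g_\fd:=\tE|B_\fd$ and grouping the sum according to whether $\fp_s\mid\fd$, and using $\epsilon_{\fd\fp_s}=\epsilon_\fd\eps_s$ together with $B_{\fd\fp_s}=B_\fd B_{\fp_s}$, I would record the clean identity $(q+1)E^\beps=P+\eps_s Q$, where $P=\sum_{\fd\mid(\fn/\fp_s)}\epsilon_\fd\,g_\fd$ and $Q=\sum_{\fd\mid(\fn/\fp_s)}\epsilon_\fd\,g_{\fd\fp_s}$.

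The two key computations are the action of $U_{\fp_s}$ on the building blocks $g_\fd$ and $g_{\fd\fp_s}$ for $\fd$ coprime to $\fp_s$, and both may be carried out over $\Z$. Since $T_{\fp_s}$ commutes with $B_\fd$ and $\tE|T_{\fp_s}=(|\fp_s|+1)\tE$ by Lemma \ref{lemtEEis}, each $g_\fd$ is again a $T_{\fp_s}$-eigenfunction with eigenvalue $|\fp_s|+1$; writing $T_{\fp_s}=U_{\fp_s}+B_{\fp_s}$ and noting $g_\fd|B_{\fp_s}=g_{\fd\fp_s}$ gives $g_\fd|U_{\fp_s}=(|\fp_s|+1)g_\fd-g_{\fd\fp_s}$. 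For the other block, $g_{\fd\fp_s}=g_\fd|B_{\fp_s}$ with $g_\fd\in\ctH(\sT,\Z)^{\G_\infty}$, so Lemma \ref{lemAL2}(4) yields $g_{\fd\fp_s}|U_{\fp_s}=|\fp_s|\,g_\fd$. Summing over $\fd$ gives $P|U_{\fp_s}=(|\fp_s|+1)P-Q$ and $Q|U_{\fp_s}=|\fp_s|P$, whence
\[(q+1)\,E^\beps|U_{\fp_s}=\big((|\fp_s|+1)+\eps_s|\fp_s|\big)P-Q.\]

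For part (1), $\deg\fp_s$ even forces $\eps_s=-1$, so the coefficient of $P$ collapses to $1$ and the right-hand side becomes $P-Q=(q+1)E^\beps$. Since $E^\beps$ is $\Z$-valued by Lemma \ref{lemEeps} and $E^\beps|U_{\fp_s}$ is then $\Z$-valued as well, and $\Z$ is torsion-free, I may cancel $q+1$ to conclude $E^\beps|U_{\fp_s}=E^\beps$.

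For part (2), $\deg\fp_s$ odd forces $\eps_s=1$; combining the displayed identity with $(q+1)E^\beps=P+Q$ and adding gives $(q+1)\big(E^\beps|U_{\fp_s}+E^\beps\big)=2(|\fp_s|+1)P$ over $\Z$. The point enabling the argument is that $\deg\fp_s$ odd makes $q+1$ divide $|\fp_s|+1=q^{\deg\fp_s}+1$; writing $|\fp_s|+1=(q+1)M$ with $M=\tfrac{q^{\deg\fp_s}+1}{q+1}\in\Z$ and cancelling $q+1$ (again by torsion-freeness) leaves $E^\beps|U_{\fp_s}=-E^\beps+2MP$. The main obstacle, and the reason for the hypotheses on $n$, is to kill the leftover $2MP$ after multiplying by $a\in R[n]$. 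For this I would reduce $M$ modulo $q+1$: from $q\equiv-1\pmod{q+1}$ the alternating sum $M=\sum_{j=0}^{\deg\fp_s-1}(-1)^j q^{\deg\fp_s-1-j}$ collapses to $M\equiv\deg\fp_s\pmod{q+1}$. As $n\mid(q+1)$ this gives $M\equiv\deg\fp_s\pmod n$, and $n\mid\deg\fp_s$ then yields $n\mid M$. Hence $Ma=0$ for every $a\in R[n]$, so $2MPa=0$ and therefore $a\cdot E^\beps|U_{\fp_s}=-a\cdot E^\beps$, as claimed.
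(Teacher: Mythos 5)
Your proof is correct and follows essentially the same route as the paper: your identity $(q+1)E^{\beps}=P+\eps_sQ$ with $P=\sum_{\fd\mid\fn/\fp_s}\epsilon_\fd\,\tE|B_\fd=E^{\beps_{H(\fn/\fp_s)}}$ and $Q=P|B_{\fp_s}$ is exactly the paper's starting point $E^{\beps_{H(\fn),s}}=\frac{1}{q+1}E^{\beps_{H(\fn/\fp_s)}}|(1-(-1)^{\deg\fp_s}B_{\fp_s})$, and both arguments then combine $T_{\fp_s}=U_{\fp_s}+B_{\fp_s}$, the eigenvalue $|\fp_s|+1$, and Lemma \ref{lemAL2}(4) before invoking the congruence $\tfrac{|\fp_s|+1}{q+1}\equiv\deg\fp_s\ (\mathrm{mod}\ n)$. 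The only cosmetic difference is that you clear the denominator and cancel $q+1$ by torsion-freeness of $\Z$-valued functions rather than computing with $\Q$-coefficients directly.
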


\begin{proof}
We can write 
$$E^{\beps_{H(\fn),s}} = \frac{1}{q+1} \cdot E^{\beps_{H(\fn/\fp_s)}} | (1 -(-1)^{\deg \fp_s} B_{\fp_s}).$$
By Lemma \ref{lemAL2} (4), $E^{\beps_{H(\fn/\fp_s)}} | B_{\fp_s}  U_{\fp_s} = |\fp_s| \cdot E^{\beps_{H(\fn/\fp_s)}}$ and 
$$(|\fp_s|+1) E^{\beps_{H(\fn/\fp_s)}} = E^{\beps_{H(\fn/\fp_s)}} | T_{\fp_s} = E^{\beps_{H(\fn/\fp_s)}} | (U_{\fp_s} + B_{\fp_s}).$$
One then gets:
\begin{align}
E^{\beps_{H(\fn),s}} | U_{\fp_s} &= \frac{1}{q+1} \cdot E^{\beps_{H(\fn/\fp_s)}}|(U_{\fp_s} -(-1)^{\deg \fp_s} B_{\fp_s} U_{\fp_s}) \notag \\
&= \frac{1}{q+1} \cdot E^{\beps_{H(\fn/\fp_s)}} | (T_{\fp_s}-  B_{\fp_s}-(-1)^{\deg \fp_s} |\fp_s|) \notag \\
&= \left((1-(-1)^{\deg \fp_s}) \cdot \frac{|\fp_s|+1}{q+1}\right) \cdot E^{\beps_{H(\fn/\fp_s)}} +(-1)^{\deg \fp_s}\cdot E^{\beps_{H(\fn),s}}. \notag
\end{align}
Part (1) immediately follows from this; part (2) follows from the observation that if $q\equiv -1\ (\mod\ n)$ and $\deg(\fp_s)$ is odd, then 
$$
\frac{|\fp_s|+1}{q+1} =\sum_{i=0}^{\deg(\fp_s)-1}(-1)^i q^i\equiv \deg(\fp_s)\ (\mod\ n).
$$
\end{proof}

\begin{prop}\label{prop 3.4}
Let $R$ be a coefficient ring with $(q-1) \in R^\times$. Assume there exists at most one prime divisor $\fp_i$ of $\fn$ 
with $\deg \fp_i$ odd. Then 
$$\widetilde{\cE}(\fn,R)^{\beps_{H(\fn)}} = R \cdot E^{\beps_{H(\fn)}},$$
and 
$$\cE_0(\fn,R)^{\beps_{H(\fn)}} =
\begin{cases}
R\left[N(\fn,\beps_{H(\fn)})\right]\cdot E^{\beps_{H(\fn)}}, & \text{ if $\beps_{H(\fn)} \neq \mathbf{1}$}; \\
0, & \text{ if $\beps_{H(\fn)} = \mathbf{1}$.}
\end{cases}
$$
\end{prop}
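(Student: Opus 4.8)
The whole proof rests on reducing the first assertion to an integrality statement about one Fourier coefficient. Write $E:=E^{\beps_{H(\fn)}}$; recall $2\in R^\times$ (shown in the proof of Proposition \ref{prop 3.2FT}) and, from (\ref{eqE(1)}) with $\nu(\beps_{H(\fn)})=1$, that $E^*(1)=\tfrac{1-q^2}{q}=(q+1)\cdot\tfrac{1-q}{q}$ with $\tfrac{1-q}{q}\in R^\times$. By Lemma \ref{lem1.15fm} the Fourier coefficients of any Eisenstein cochain at $\fm$ coprime to $\fn$ are determined by its value at $\fm=1$; since $\tE^*(1)=\tfrac{1-q^2}{q}=E^*(1)$, the cochains $E$ and $\tE$ share all coefficients at such $\fm$. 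Moreover, from the definition of $E$ and the identity $b\,\tE|B_{\fp}=(-1)^{\deg\fp}b\,\tE$ for $b\in R[q+1]$ (established inside the proof of Proposition \ref{prop-cor2.4}), one gets $b\,E=2^s\,b\,\tE$ for all $b\in R[q+1]$, so $R[q+1]\cdot\tE\subseteq R\cdot E$. Granting that every $f\in\ctE(\fn,R)^{\beps_{H(\fn)}}$ has $f^*(1)\in(q+1)R$, I can pick $a\in R$ with $f^*(1)=a\,E^*(1)$; then $f-aE\in\ctH(\fn,R)^{\beps_{H(\fn)}}$ has vanishing coefficients at all $\fm$ coprime to $\fn$, hence $f-aE\in\cH(1,R)=R[q+1]\cdot\tE\subseteq R\cdot E$ by Proposition \ref{prop-cor2.4} and Example \ref{example1}, so $f\in R\cdot E$. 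Thus the first assertion is equivalent to the claim $f^*(1)\in(q+1)R$, which I would prove by induction on $s$.

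The induction removes at each step a prime $\fp_s\mid\fn$ of \emph{even} degree. The hypothesis that at most one prime divisor has odd degree is exactly what makes such a $\fp_s$ available whenever $s\ge 2$, and also when $s=1$ with $\fp_1$ of even degree; the only genuine base cases are $s=0$ (where $\ctE(1,R)=R\tE$ by Example \ref{example1}) and $\fn=\fp$ with $\deg\fp$ odd. Removing an even $\fp_s$ leaves $\fn/\fp_s$ still with at most one odd-degree prime, so the inductive hypothesis $\ctE(\fn/\fp_s,R)^{\beps_{H(\fn/\fp_s)}}=R\cdot g$, $g:=E^{\beps_{H(\fn/\fp_s)}}$, applies. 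I then work inside $V:=\{v\in\ctE(\fn,R)\mid v|W_{\fp_j}=(-1)^{\deg\fp_j}v\ \text{for}\ j<s\}$, which decomposes as $\ctE(\fn,R)^{\beps_{H(\fn)}}\oplus\ctE(\fn,R)^{\beps_{H(\fn),s}}$, the second summand being $R\cdot E^{\beps_{H(\fn),s}}$ by Proposition \ref{prop 3.2FT} (as $\beps_{H(\fn),s}\ne\beps_{H(\fn)}$). Inside $V$ sits the ``old'' module $M:=R\,g+R\,g|B_{\fp_s}$, which contains $E^{\beps_{H(\fn)}}=g+g|B_{\fp_s}$ and $(q+1)E^{\beps_{H(\fn),s}}=g-g|B_{\fp_s}$.

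The crux is to prove $V=M+R\cdot E^{\beps_{H(\fn),s}}$, i.e.\ that $V$ carries no new Eisenstein contribution at $\fp_s$ beyond the $(q+1)$-torsion already present in $E^{\beps_{H(\fn),s}}$. After inverting $q+1$ (or passing to the residue field) Proposition \ref{prop-cor2.4} shows there are no new Eisenstein series, so $V$ and $M$ agree up to $(q+1)$-primary torsion; bounding that torsion is the technical heart, and this is where the even degree of $\fp_s$ is essential, via Lemma \ref{lem 3.8}(1): it gives $E^{\beps_{H(\fn),s}}|U_{\fp_s}=E^{\beps_{H(\fn),s}}$, while a parallel computation with $(g|B_{\fp_s})|U_{\fp_s}=|\fp_s|g$ (Lemma \ref{lemAL2}(4)) and $g|U_{\fp_s}=(|\fp_s|+1)g-g|B_{\fp_s}$ yields $E^{\beps_{H(\fn)}}|U_{\fp_s}=|\fp_s|E^{\beps_{H(\fn)}}+(|\fp_s|+1)(q+1)E^{\beps_{H(\fn),s}}$, so $U_{\fp_s}$ has only the old eigenvalues $1$ and $|\fp_s|$ on $V$. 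With $V=M+R\cdot E^{\beps_{H(\fn),s}}$ in hand I apply the idempotent $P_+=\tfrac12(1+W_{\fp_s})$ (legitimate since $2\in R^\times$), which is the identity on $\ctE(\fn,R)^{\beps_{H(\fn)}}$ and kills $E^{\beps_{H(\fn),s}}$; since $P_+g=P_+(g|B_{\fp_s})=\tfrac12 E^{\beps_{H(\fn)}}$, this gives $\ctE(\fn,R)^{\beps_{H(\fn)}}=P_+(V)=R\cdot E^{\beps_{H(\fn)}}$. For the base case $\fn=\fp$ with $\deg\fp$ odd I would argue directly using Lemma \ref{lem 3.8}(2) with $n=(q+1,\deg\fp)$ to control the $(q+1)$-torsion in the flip eigenspace, together with the fact that $E^{\beps_{H(\fp)}}(e_0)=q\,N(\fp,\beps_{H(\fp)})$ is a unit in $R$ (as $N(\fp,\beps_{H(\fp)})\equiv 2\pmod{q+1}$ by Remark \ref{remNnu} and $2\in R^\times$), which pins the eigenspace to rank one.

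Finally I read off $\cE_0(\fn,R)^{\beps_{H(\fn)}}=R\cdot E\cap\cH_0(\fn,R)$. By Lemma \ref{lemEeps}, $E$ is harmonic exactly when $\beps_{H(\fn)}\ne\bone$, and for an Atkin--Lehner eigenform cuspidality amounts to $f^0(1)=0$ by \cite[Lem.\ 2.19]{PW1}. If $\beps_{H(\fn)}\ne\bone$, every $a\,E$ is harmonic and, since $(a\,E)^0(1)=a\,q\,N(\fn,\beps_{H(\fn)})$ by (\ref{eqE(1)}) with $q\in R^\times$, it is cuspidal iff $a\in R[N(\fn,\beps_{H(\fn)})]$, giving $\cE_0(\fn,R)^{\beps_{H(\fn)}}=R[N(\fn,\beps_{H(\fn)})]\cdot E$. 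If $\beps_{H(\fn)}=\bone$, then (\ref{eqE+E}) forces $a\in R[q+1]$ for $a\,E$ to be harmonic, whence $a\,E=2^s a\,\tE$ is non-cuspidal unless $a=0$ (as $\tE(e_i)=q^{i+1}$ never vanishes), so $\cE_0(\fn,R)^{\bone}=0$. I expect the main obstacle to be precisely the torsion bound $V=M+R\cdot E^{\beps_{H(\fn),s}}$ over a ring in which $q+1$ is not invertible; the odd-degree hypothesis is the device that keeps every reduction in the clean even-degree case of Lemma \ref{lem 3.8}(1), where the $U_{\fp_s}$-eigenvalue is determined outright rather than only on $(q+1)$-torsion.
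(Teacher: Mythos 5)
Your reduction of the first assertion to the integrality claim $f^*(1)\in (q+1)R$ is correct, and your final paragraph deducing the description of $\cE_0(\fn,R)^{\beps_{H(\fn)}}$ from the first assertion matches the paper's own argument. The problem is that the inductive step is never actually carried out. All of the content is concentrated in your claim $V=M+R\cdot E^{\beps_{H(\fn),s}}$, and the evidence you offer for it --- the computation of $U_{\fp_s}$ on $E^{\beps_{H(\fn),s}}$ and on $E^{\beps_{H(\fn)}}=g+g|B_{\fp_s}$ --- only describes the action of $U_{\fp_s}$ on the submodule $M+R\cdot E^{\beps_{H(\fn),s}}$ that you already understand; it says nothing about a hypothetical $v\in V$ lying outside it, so the conclusion that ``$U_{\fp_s}$ has only the old eigenvalues $1$ and $|\fp_s|$ on $V$'' presupposes exactly what is to be proved. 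Likewise, ``inverting $q+1$'' is vacuous for the coefficient rings that matter ($q+1$ lies in the maximal ideal of the local ring $R$, so the localization is the zero ring), and passing to the residue field loses precisely the torsion information you need. You acknowledge this yourself; as written the proof has a hole at its center. The base case $\fn=\fp$ with $\deg\fp$ odd is likewise only sketched (though your observation that $N(\fp,\beps_{H(\fp)})\equiv 2\ (\mod\ q+1)$ forces $N(\fp,\beps_{H(\fp)})\in R^\times$ is usable once you note that it only applies when $q+1$ is a non-unit of the local ring $R$; when $q+1\in R^\times$ the argument of Proposition \ref{prop 3.2FT} already applies verbatim).

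The paper closes the gap by a descent rather than a torsion bound. Given $f\in\ctE(\fn,R)^{\beps_{H(\fn)}}$, set $\tilde f=f-\frac{qf^*(1)}{1-q}E^{\beps_{H(\fn),s}}$; then $\tilde f^*(1)=0$, Proposition \ref{propALOhta} kills all $\tilde f^*(\fm)$ with $\fp_s\nmid\fm$, and Lemma \ref{lemLevelLow} produces $g\in\ctH(\fn/\fp_s,R)$ with $\tilde f=g|B_{\fp_s}$ and $\tilde f|W_{\fp_s}=g$, whence $2f=g+g|B_{\fp_s}$ and $g-g|W_{\fp_s}=\frac{2qf^*(1)}{1-q}E^{\beps_{H(\fn),s}}$. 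The device you are missing is the trace map $\Tr^{\fn}_{\fn/\fp_s}(h)=h+h|W_{\fp_s}U_{\fp_s}$: Lemma \ref{lem 3.8}(1) gives $\Tr^{\fn}_{\fn/\fp_s}(E^{\beps_{H(\fn),s}})=0$, hence $\Tr^{\fn}_{\fn/\fp_s}(g)=\Tr^{\fn}_{\fn/\fp_s}(g|W_{\fp_s})$, which unwinds to $(|\fp_s|+1)g=g|T_{\fp_s}$. Thus $g$ itself is Eisenstein of level $\fn/\fp_s$ in the $\beps_{H(\fn/\fp_s)}$-eigenspace, the induction hypothesis gives $g=aE^{\beps_{H(\fn/\fp_s)}}$, and $f=\frac{a}{2}E^{\beps_{H(\fn)}}$. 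This sidesteps any direct analysis of the $(q+1)$-torsion in your module $V$, and it is the step your proposal would need to supply.
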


\begin{proof}
When $s =1$, i.e., $\fn = \fp$ is a prime, take $\beps' = (-(-1)^{\deg \fp}) \in \E$. 
Then $(E^{\beps'})^*(1) = q^{-1}(1-q) \in R^{\times}$.
For $f \in \widetilde{\cE}(\fp,R)^{\beps_{H(\fp)}}$, put
$$\tilde{f} := f - \frac{q f^*(1)}{1-q} E^{\beps'}.$$
Then by Proposition~\ref{propALOhta}, 
$\tilde{f}^*(\fm) = 0$ unless $\fp \mid \fm$. Hence by Lemma~\ref{lemLevelLow}, 
there exists $g \in \widetilde{\cH}(1,R) = R \widetilde{E}$ such that
$\tilde{f} = g|B_\fp$ and 
$$\tilde{f}|W_\fp = (-1)^{\deg \fp} \left(f+\frac{q f^*(1)}{1-q} E^{\beps'}\right) = g.$$
In particular,
$$g+(-1)^{\deg \fp} g|B_\fp = g+(-1)^{\deg \fp}\tilde{f} = 2(-1)^{\deg \fp} f.$$
Writing $g = a \widetilde{E}$ for $a \in R$, we get 
$$g+(-1)^{\deg \fp} g|B_\fp = a E^{\beps_{H(\fp)}}$$
and so
$f = (2^{-1} (-1)^{\deg \fp} a) \cdot E^{\beps_{H(\fp)}}$.
This shows $$\widetilde{\cE}(\fp,R)^{\beps_{H(\fp)}} = R \cdot E^{\beps_{H(\fp)}}.$$

Now suppose $s>1$.
Without loss of generality, we assume that $\deg \fp_i$ is even for $2\leq i \leq s$.
Let  $\beps_{H(\fn),s} = (\varepsilon_1,\dots ,\varepsilon_s)\in \E$ be the element 
defined in Lemma~\ref{lem 3.8}. 
In particular, $\beps_{H(\fn),s} \neq \mathbf{1}$ and $\beps_{H(\fn),s} \neq\beps_{H(\fn)}$.
Given $f \in \widetilde{\cE}(\fn,R)^{\beps_{H(\fn)}}$, put
$$\tilde{f} := f - \frac{qf^*(1)}{1-q} E^{\beps_{H(\fn),s}}.$$
Then, by Proposition \ref{propALOhta}, $\tilde{f}^*(\fm) = 0$ unless $\fp_{s} | \fm$. By Lemma~\ref{lemLevelLow}, 
there exists $g \in \widetilde{\cH}(\fn/\fp_{s},R)$ such that
$\tilde{f} = g|B_{\fp_{s}}$ and 
$$\tilde{f}|W_{\fp_{s}} = f + \frac{q f^*(1)}{1-q} E^{\beps_{H(\fn),s}} = g.$$ 
In particular, 
$$g+g|B_{\fp_{s}} = 2 f \quad \text{ and } \quad 
g- g|B_{\fp_{s}} = g- g|W_{\fp_{s}} = \frac{2q f^*(1)}{1-q} \cdot E^{\beps_{H(\fp),s}}.$$

Consider the trace map : $\text{Tr}^{\fn}_{\fn/\fp_s}: \widetilde{\cH}(\fn,R) \rightarrow \widetilde{\cH}(\fn/\fp_s,R)$ defined by:
$$\text{Tr}^{\fn}_{\fn/\fp_s}(h):= h + h | W_{\fp_s} U_{\fp_s}.$$
Then Lemma~\ref{lem 3.8} (1) implies that 
$$\text{Tr}^{\fn}_{\fn/\fp_s}(E^{\beps_{H(\fn),s}}) = E^{\beps_{H(\fn),s}} - E^{\beps_{H(\fn),s}} | U_{\fp_s} = 0.$$
Therefore 
$$\text{Tr}^{\fn}_{\fn/\fp_s}(g) = \text{Tr}^{\fn}_{\fn/\fp_s}(g |W_{\fp_s}).$$
The left hand side is equal to $(|\fp_s|+1)g$ as $g$ is of level $\fn/\fp_s$, and the right hand side is nothing but $g| T_{\fp_s}$. 
This implies that $g \in \widetilde{\cE}(\fn/\fp_s,R)^{\beps_{H(\fn/\fp_s)}}$.
By induction, there exists $a \in R$ so that $g = a E^{\beps_{H(\fn/\fp_s)}}$,
and hence
$$f = 2^{-1}\left(g+ g|B_{\fp_s}\right) = 2^{-1} a \cdot E^{\beps_{H(\fn)}}.$$
Therefore $\widetilde{\cE}(\fn,R)^{\beps_{H(\fn)}} = R \cdot E^{\beps_{H(\fn)}}$, which proves the first statement 
of the proposition. The second statement now can be deduced by an argument similar to the argument in the proof of Proposition \ref{prop 3.2FT}. 
\end{proof}

\begin{prop}\label{prop 3.5}
Let $R$ be a coefficient ring with $(q-1) \in R^\times$. Assume there are at least two prime factors of $\fn$ with odd degree, and  
$(q+1, \deg \fp_1,...,\deg \fp_s)$ is invertible in $R$.
Then 
$$\cE_0(\fn,R)^{\beps_{H(\fn)}} =
R\left[N(\fn,\beps_{H(\fn)})\right]\cdot E^{\beps_{H(\fn)}}.$$
\end{prop}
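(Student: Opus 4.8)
The plan is to establish the two inclusions. The inclusion $R[N(\fn,\beps_{H(\fn)})]\cdot E^{\beps_{H(\fn)}}\subseteq \cE_0(\fn,R)^{\beps_{H(\fn)}}$ is immediate: by Lemma~\ref{lemEeps}, $E^{\beps_{H(\fn)}}$ is a harmonic (here $\beps_{H(\fn)}\neq\bone$) Eisenstein cochain with $W_{\fp_i}$-eigenvalue $\eps_i$, and by (\ref{eqE(1)}) its constant coefficient is $(E^{\beps_{H(\fn)}})^0(1)=qN(\fn,\beps_{H(\fn)})$; since a simultaneous $\W$-eigenform is cuspidal exactly when its constant coefficient vanishes (\cite[Lem. 2.19]{PW1}), $aE^{\beps_{H(\fn)}}$ is cuspidal iff $aN(\fn,\beps_{H(\fn)})=0$. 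For the reverse inclusion the key reduction is that $f\mapsto f^\ast(1)$ is injective on $\cE_0(\fn,R)^{\beps_{H(\fn)}}$: if $f^\ast(1)=0$ then $f^\ast(\fm)=0$ for all $\fm$ prime to $\fn$ (Lemma~\ref{lem1.15fm}), and since $f$ is a $W_{\fp_i}$-eigenform for every $i$, repeated application of Proposition~\ref{propALOhta} gives $f^\ast(\fm)=0$ for all $\fm$; together with $f^0(1)=0$ and Lemma~\ref{lemFH} this yields $f=0$.

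Because $R$ is a quotient of a discrete valuation ring it is local; let $\ell$ be its residue characteristic, which is odd and prime to $q-1$ since $p,q-1\in R^\times$. If $q+1\in R^\times$ then, by (\ref{eqE(1)}), $(E^{\beps_{H(\fn)}})^\ast(1)=(1-q^2)/q\in R^\times$, so the argument of Proposition~\ref{prop 3.2FT} applies verbatim (subtract the appropriate multiple of $E^{\beps_{H(\fn)}}$ to kill $f^\ast(1)$ and invoke Proposition~\ref{prop-cor2.4}, whose value $\cH(1,R)=R[q+1]\tE$ now vanishes); this gives $\ctE(\fn,R)^{\beps_{H(\fn)}}=R\cdot E^{\beps_{H(\fn)}}$ and the proposition follows as in Proposition~\ref{prop 3.4}.

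The substantial case is $\ell\mid(q+1)$. Here each factor of $N(\fn,\beps_{H(\fn)})$ is $\equiv 2\pmod{q+1}$ (Remark~\ref{remNnu}), so $\ell\nmid N(\fn,\beps_{H(\fn)})$; thus $N(\fn,\beps_{H(\fn)})\in R^\times$ and the claim becomes $\cE_0(\fn,R)^{\beps_{H(\fn)}}=0$. Fix $f\in\cE_0(\fn,R)^{\beps_{H(\fn)}}$, put $c:=qf^\ast(1)/(1-q)$, and let $\fp_s\mid\fn$ be a prime to be specified below. Using $(E^{\beps_{H(\fn),s}})^\ast(1)=(1-q)/q\in R^\times$, subtracting $c\,E^{\beps_{H(\fn),s}}$ kills $f^\ast(1)$ and hence, being Eisenstein, all $f^\ast(\fm)$ with $(\fm,\fn)=1$; as $f-c\,E^{\beps_{H(\fn),s}}$ remains a $W_{\fp_j}$-eigenform for $j\neq s$, Proposition~\ref{propALOhta} and Lemma~\ref{lemLevelLow} produce $g\in\ctH(\fn/\fp_s,R)$ with $f-c\,E^{\beps_{H(\fn),s}}=g\,|B_{\fp_s}$ and $g=(f-c\,E^{\beps_{H(\fn),s}})|W_{\fp_s}$, an explicit combination of $f$ and $E^{\beps_{H(\fn),s}}$. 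Feeding in the $U_{\fp_s}$-behaviour of $E^{\beps_{H(\fn),s}}$ from Lemma~\ref{lem 3.8} (the combination $E^{\beps_{H(\fn),s}}\pm E^{\beps_{H(\fn),s}}|U_{\fp_s}$ expressed through $E^{\beps_{H(\fn/\fp_s)}}$ with the factor $M_s=(|\fp_s|+1)/(q+1)$), one computes $g\,|T_{\fp_s}$ explicitly; applying the constant-coefficient formulas of Lemmas~\ref{lemFH} and~\ref{lem_new14} to $T_{\fp_s}=U_{\fp_s}+B_{\fp_s}$ and to the cuspidality relation $f^0(1)=0$ then lets one eliminate $g^0(1)$ and arrive at a relation of the shape $c\,\kappa_s\,N(\fn,\beps_{H(\fn)})=0$, where $\kappa_s$ is an explicit integer with $v_\ell(\kappa_s)=v_\ell(\deg\fp_s)$ (for instance $\kappa_s=M_s\equiv\deg\fp_s\pmod{q+1}$ when $\deg\fp_s$ is odd, with an analogous integer in the even-degree case).

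Finally I would invoke the hypothesis $(q+1,\deg\fp_1,\dots,\deg\fp_s)\in R^\times$: since $\ell\mid(q+1)$, it forces $\ell\nmid\deg\fp_i$ for some index, which we take to be $s$, so $\kappa_s$ is a unit; as $N(\fn,\beps_{H(\fn)})$ is also a unit, $c\,\kappa_s\,N(\fn,\beps_{H(\fn)})=0$ gives $c=0$, hence $f^\ast(1)=0$ and $f=0$ by injectivity. The assumption of at least two primes of odd degree enters precisely here: it guarantees that after deleting the chosen $\fp_s$ at least one odd-degree prime survives, so that $\beps_{H(\fn/\fp_s)}\neq\bone$ and $E^{\beps_{H(\fn/\fp_s)}}$ has the expected constant coefficient. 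The main obstacle is that the level-lowering does \emph{not} return an Eisenstein cochain: $g$ carries the ``wrong'' $U_{\fp_s}$-eigenvalue (near $-(|\fp_s|+1)$ rather than $|\fp_s|+1$), so one cannot simply reinsert it into an inductive hypothesis as in Proposition~\ref{prop 3.4}. Replacing that induction by the explicit relation above, and carefully tracking the torsion furnished by Lemma~\ref{lem 3.8}(2) so that the greatest-common-divisor hypothesis can be brought to bear, is the technical heart of the argument.
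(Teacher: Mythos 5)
Your overall skeleton matches the paper's: split off the easy case $q+1\in R^\times$, and otherwise subtract $\frac{qf^\ast(1)}{1-q}E^{\beps_{H(\fn),i_0}}$, level-lower via Proposition \ref{propALOhta} and Lemma \ref{lemLevelLow}, and extract a constant-coefficient relation of the shape $(\text{unit})\cdot\deg\fp_{i_0}\cdot N(\fn,\beps_{H(\fn)})\cdot f^\ast(1)=0$, which the gcd hypothesis turns into $f^\ast(1)=0$. Your observations that $R$ is local, that $N(\fn,\beps_{H(\fn)})\equiv 2^s$ is a unit when the residue characteristic $\ell$ divides $q+1$, and that a single index $i_0$ with $\ell\nmid\deg\fp_{i_0}$ then suffices, are all correct and slightly streamline the paper's version (which records the relation for every $i_0$, together with $(q+1)N(\fn,\beps_{H(\fn)})f^\ast(1)=0$, and invokes the gcd only at the end).

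The gap is in how you propose to produce that key relation. You want to compute $g|T_{\fp_s}$ using the $U_{\fp_s}$-action on $E^{\beps_{H(\fn),s}}$ from Lemma \ref{lem 3.8}. But $g|T_{\fp_s}=g|U_{\fp_s}+g|B_{\fp_s}$, and $g|U_{\fp_s}$ involves $f|U_{\fp_s}$, about which the Eisenstein hypothesis says nothing ($T_\fp$ is controlled only for $\fp\nmid\fn$). The only way to cancel $f|U_{\fp_s}$ is the trace identity comparing $\mathrm{Tr}^{\fn}_{\fn/\fp_s}(g)$ with $\mathrm{Tr}^{\fn}_{\fn/\fp_s}(g|W_{\fp_s})$, whose difference equals a multiple of $\mathrm{Tr}^{\fn}_{\fn/\fp_s}(E^{\beps_{H(\fn),s}})$; by Lemma \ref{lem 3.8} (1) this trace is identically zero when $\deg\fp_s$ is even, so your relation degenerates to $0=0$ exactly in that case. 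This matters because the index furnished by the hypothesis $(q+1,\deg\fp_1,\dots,\deg\fp_s)\in R^\times$ may well have even degree (e.g.\ two odd-degree primes each of degree divisible by $\ell$ plus one prime of degree $2$): the ``analogous integer in the even-degree case'' is not something your route actually produces. The paper's derivation avoids Hecke operators at $\fp_{i_0}$ altogether: from $\tilde f_{i_0}=g|B_{\fp_{i_0}}$ and $g=(-1)^{\deg\fp_{i_0}}\bigl(f+\frac{qf^\ast(1)}{1-q}E^{\beps_{H(\fn),i_0}}\bigr)$ one gets $2f=(-1)^{\deg\fp_{i_0}}g+g|B_{\fp_{i_0}}$, and taking constant coefficients with $(g|B_{\fp_{i_0}})^0(1)=|\fp_{i_0}|\,g^0(1)$ (Lemmas \ref{lemFH} and \ref{lem_new14}) and $f^0(1)=0$ yields $\bigl((-1)^{\deg\fp_{i_0}}+|\fp_{i_0}|\bigr)g^0(1)=0$, i.e.\ the desired relation with the factor $\frac{1-(-1)^{\deg\fp_{i_0}}|\fp_{i_0}|}{q+1}\equiv\deg\fp_{i_0}\ (\mathrm{mod}\ q+1)$ valid for either parity. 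With that substitution your argument closes.
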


\begin{proof}
For $f \in \cE_0(\fn,R)^{\beps_{H(\fn)}}$,
put 
$$\tilde{f} := (q+1) f - \frac{q f^*(1)}{1-q} E^{\beps_{H(\fn)}} \in \widetilde{\cE}(\fn,R)^{\beps_{H(\fn)}}.$$
Then $\tilde{f}^*(\fm) = 0$ for every $\fm$ coprime to $\fn$.
By Proposition \ref{prop-cor2.4}, we get $\tilde{f} \in \cH(1,R) = R[q+1] \cdot \widetilde{E}$.
In particular, 
$$(q+1)^2f = \frac{q (q+1) f^*(1)}{1-q} E^{\beps_{H(\fn)}}.$$
As $f$ is cuspidal, we get 
\begin{align}\label{eqn 3.1}
(q+1)^2 f^0(1) &= \frac{q (q+1) f^*(1)}{1-q} (E^{\beps_{H(\fn)}})^0(1) \\ 
\nonumber &= \frac{q}{1-q} \cdot (q+1)N(\fn,\beps_{H(\fn)}) f^*(1) = 0.
\end{align}

For $1\leq i_0 \leq s$, define $\beps_{H(\fn),i_0} = (\eps_1,\dots ,\eps_s)\in \E$ by  
$$\eps_i := \begin{cases}
(-1)^{\deg \fp_i} & \text{ if $i \neq i_0$,}\\
-(-1)^{\deg \fp_i} & \text{ if $i = i_0$.}
\end{cases}$$
Since $s>1$ and there are at least two prime factors of $\fn$ with odd degree, 
$\beps_{H(\fn),i_0} \neq \mathbf{1}$, $\beps_{H(\fn),i_0} \neq \beps_{H(\fn)}$ for $1\leq i_0\leq s$.
Put
$$\tilde{f}_{i_0} := f - \frac{q f^*(1)}{1-q} E^{\beps_{H(\fn),i_0}}.$$
Then $\tilde{f}_{i_0}^*(\fm) = 0$ unless $\fp_{i_0} \mid \fm$. Hence by Lemma~\ref{lemLevelLow} there exists $g \in \cH(\fn/\fp_{i_0},R)$ such that
$\tilde{f}_{i_0} = g|B_{\fp_{i_0}}$ and 
$$\tilde{f}_{i_0}|W_{\fp_{i_0}} = (-1)^{\deg \fp_{i_0}} \left(f + \frac{q f^*(1)}{1-q} E^{\beps_{H(\fn),i_0}}\right) = g.$$ 
In particular, we get 
$$
(-1)^{ \deg \fp_{i_0}}g+g|B_{\fp_{i_0}} = 2 f$$ and 
$$(-1)^{ \deg \fp_{i_0}}g- g|B_{\fp_{i_0}} = (-1)^{ \deg \fp_{i_0}}g- g|W_{\fp_{i_0}} =  \frac{ 2 q f^*(1) }{1-q} \cdot E^{\beps_{H(\fp),i_0}}.$$
Adding this equations, we get $g=(-1)^{ \deg \fp_{i_0}}\left(f+\frac{q f^*(1) }{1-q} \cdot E^{\beps_{H(\fp),i_0}}\right)$. Since $f$ is cuspidal,
\begin{align*}
g^0(1) &= (-1)^{ \deg \fp_{i_0}} \frac{q f^*(1)}{1-q} \cdot (E^{\beps_{H(\fn),i_0}})^0(1) \\
&= \frac{ q^2f^*(1) }{q-1} \cdot N(\fn/\fp_{i_0},\beps_{H(\fn/\fp_{i_0})}) \cdot \frac{|\fp_{i_0}|-(-1)^{ \deg \fp_{i_0}}}{q+1}. 
\end{align*}
Therefore, using Lemma \ref{lem_new14}, we obtain 
\begin{align}\label{eqn 3.2}
0 &= (-1)^{ \deg \fp_{i_0}}g^0(1)+ (g|B_{\fp_{i_0}})^0(1) \\ 
\nonumber & = \frac{q^2}{1-q} \cdot \left(N(\fn,\beps_{H(\fn)}) \cdot \frac{1-(-1)^{ \deg \fp_{i_0}}|\fp_{i_0}|}{q+1}\right) \cdot f^*(1).
\end{align}

By assumption
$$\left(q+1, \frac{1-(-1)^{ \deg \fp_{1}}|\fp_{1}|}{q+1},\dots ,\frac{1-(-1)^{ \deg \fp_{s}}|\fp_{s}|}{q+1}\right) 
= (q+1,\deg \fp_1,...,\deg \fp_s) \in R^{\times},$$
so the equations~(\ref{eqn 3.1}) and (\ref{eqn 3.2}) force $f^*(1) \in R[N(\fn,\beps_{H(\fn)})]$.
From the Eisenstein property of $f$, we get $N(\fn,\beps_{H(\fn)})f^*(\fm) = 0$ for every $\fm$ coprime to $\fn$.
Therefore, by Proposition~\ref{prop-cor2.4}, we obtain $N(\fn,\beps_{H(\fn)}) f \in \cH_0(1,R) = 0$.
Finally, from the fact that $(q+1,N(\fn,\beps_{H(\fn)}))$ is a power of $2$, thus invertible in $R$, we conclude that 
there exists $\alpha \in R$ such that
$$f = \alpha (q+1)^2 f = \frac{q(q+1)\alpha f^*(1)}{1-q} E^{\beps_{H(\fn)}}.$$
Thus, $f \in R[N(\fn,\beps_{H(\fn)})] \cdot E^{\beps_{H(\fn)}}$.
\end{proof} 

Given a prime $\ell$ and a positive integer $r$, let $R_{\ell}^{(r)} := \Z_\ell[\zeta_p]/ \ell^r \Z_\ell[\zeta_p]$.
For our purposes, it suffices to focus on this particular coefficient ring for the remaining case:

\begin{prop}\label{prop 3.9}
Let $\ell$ be a prime such that $\ell \nmid q(q-1)$, $\ell \mid q+1$, and $\ell \mid \deg \fp_i$ for $1\leq i \leq s$. Then 
$$\widetilde{\cE}(\fn,R_{\ell}^{(r)})^{\beps_{H(\fn)}} = R_\ell^{(r)} \cdot E^{\beps_{H(\fn)}}, \quad \forall r \geq 1,$$
and 
$${\cE}_0(\fn,R_\ell^{(r)})^{\beps_{H(\fn)}} = R_\ell^{(r)}\left[N(\fn,\beps_{H(\fn)}\right] \cdot E^{\beps_{H(\fn)}} = 0.
$$
\end{prop}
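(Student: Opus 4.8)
Since $\ell\mid q+1$ and $\ell\nmid q(q-1)$, the prime $\ell$ is odd and different from $p$; in particular $2$ and $q$ are units in $R_\ell^{(r)}$, and $R_\ell^{(r)}$ is a quotient of the discrete valuation ring $\Z_\ell[\zeta_p]$ (unramified over $\Z_\ell$, as $\ell\neq p$), hence a local chain ring whose ideals are exactly the $\ell^iR_\ell^{(r)}$. The plan is to reduce everything to the single assertion
$$\widetilde{\cE}(\fn,R_\ell^{(r)})^{\beps_{H(\fn)}} = R_\ell^{(r)}\cdot E^{\beps_{H(\fn)}}. \qquad (\star)$$
By Remark~\ref{remNnu} together with $q\equiv -1\ (\mod\ \ell)$ we have $N(\fn,\beps_{H(\fn)})\equiv 2^s\ (\mod\ \ell)$, so $N(\fn,\beps_{H(\fn)})$ is a unit in $R_\ell^{(r)}$; thus $R_\ell^{(r)}[N(\fn,\beps_{H(\fn)})]=0$ and the right-hand equality in the proposition is automatic. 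For the middle equality, granting $(\star)$, any $f\in\cE_0(\fn,R_\ell^{(r)})^{\beps_{H(\fn)}}$ has the form $aE^{\beps_{H(\fn)}}$; being cuspidal and an eigenfunction of all Atkin--Lehner involutions it satisfies $f^0(1)=0$ (as in the proof of Proposition~\ref{prop 3.2FT}), while $(E^{\beps_{H(\fn)}})^0(1)=qN(\fn,\beps_{H(\fn)})$ by (\ref{eqE(1)}) is a unit, forcing $a=0$. Hence $\cE_0(\fn,R_\ell^{(r)})^{\beps_{H(\fn)}}=0$, as claimed.

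To prove $(\star)$ I would argue by induction on the number $s$ of prime divisors of $\fn$, the cases $s\leq 1$ being covered by Proposition~\ref{prop 3.4}. For $s\geq 2$, if at least one $\deg\fp_i$ is even, reorder so that $\deg\fp_s$ is even. Then the inductive step of Proposition~\ref{prop 3.4} applies essentially verbatim: subtracting a suitable multiple of $E^{\beps_{H(\fn),s}}$ (whose coefficient $(E^{\beps_{H(\fn),s}})^\ast(1)=q^{-1}(1-q)$ is a unit, since $\beps_{H(\fn),s}\neq\beps_{H(\fn)}$) kills all Fourier coefficients prime to $\fn$ by Lemma~\ref{lem1.15fm} and Proposition~\ref{propALOhta}, so Lemma~\ref{lemLevelLow} descends $f$ to some $g\in\widetilde{\cH}(\fn/\fp_s,R_\ell^{(r)})$; the trace computation with Lemma~\ref{lem 3.8}(1), where $\text{Tr}^\fn_{\fn/\fp_s}(E^{\beps_{H(\fn),s}})=0$ because $\deg\fp_s$ is even, shows $g\in\widetilde{\cE}(\fn/\fp_s,R_\ell^{(r)})^{\beps_{H(\fn/\fp_s)}}$, and the inductive hypothesis $(\star)$ for $\fn/\fp_s$ finishes this case.

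The remaining case, where \emph{every} $\deg\fp_i$ is odd, is the main obstacle: now $\text{Tr}^\fn_{\fn/\fp_s}(E^{\beps_{H(\fn),s}})=2\tfrac{|\fp_s|+1}{q+1}E^{\beps_{H(\fn/\fp_s)}}$ does \emph{not} vanish over $R_\ell^{(r)}$, so the descended $g$ is Eisenstein at $\fp_s$ only modulo $\ell$, the defect $\tfrac{|\fp_s|+1}{q+1}$ being $\equiv\deg\fp_s\equiv 0\ (\mod\ \ell)$ by Lemma~\ref{lem 3.8}(2) but nonzero for $r>1$. To get around this I would run a secondary induction on $r$, exploiting the chain-ring structure. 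For $r=1$ the ring $R_\ell^{(1)}=\Z_\ell[\zeta_p]/\ell$ has characteristic $\ell$, so $\tfrac{|\fp_s|+1}{q+1}=0$ there and $\text{Tr}^\fn_{\fn/\fp_s}(E^{\beps_{H(\fn),s}})=0$ again; the argument of the even case then goes through, with the obvious sign changes coming from $\deg\fp_s$ odd, and, using $(\star)$ for $\fn/\fp_s$ over $R_\ell^{(1)}$, proves $(\star)$ over $R_\ell^{(1)}$.

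For $r>1$ I would instead reduce modulo $\ell^{r-1}$: given $f\in\widetilde{\cE}(\fn,R_\ell^{(r)})^{\beps_{H(\fn)}}$, its image in $\widetilde{\cE}(\fn,R_\ell^{(r-1)})^{\beps_{H(\fn)}}$ equals $\bar aE^{\beps_{H(\fn)}}$ by the inductive hypothesis on $r$; lifting $\bar a$ to $a\in R_\ell^{(r)}$, the difference $f-aE^{\beps_{H(\fn)}}$ takes values in the socle $\ell^{r-1}R_\ell^{(r)}\cong R_\ell^{(1)}$, hence equals $\ell^{r-1}h$ for some $h\in\widetilde{\cE}(\fn,R_\ell^{(1)})^{\beps_{H(\fn)}}=R_\ell^{(1)}E^{\beps_{H(\fn)}}$ by the $r=1$ case, all structure being preserved by the $R$-linear identification of the socle with $R_\ell^{(1)}$. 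Since $a\mapsto aE^{\beps_{H(\fn)}}$ is injective (as $(E^{\beps_{H(\fn)}})^0(1)$ is a unit), this writes $f$ as an $R_\ell^{(r)}$-multiple of $E^{\beps_{H(\fn)}}$ and completes both inductions. The crux of the whole argument is therefore the passage to the residue field, where the obstruction $\tfrac{|\fp_s|+1}{q+1}$ vanishes precisely because $\ell\mid\deg\fp_s$; the $\ell$-adic filtration then propagates the conclusion to every $R_\ell^{(r)}$.
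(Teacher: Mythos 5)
Your proposal is correct and follows essentially the same route as the paper: reduce to the spanning statement $(\star)$, descend along a prime $\fp_s$ using $E^{\beps_{H(\fn),s}}$, Proposition \ref{propALOhta}, Lemma \ref{lemLevelLow} and the trace map of Lemma \ref{lem 3.8}, and then propagate from the residue field to $R_\ell^{(r)}$ by an $\ell$-adic d\'evissage. The only (harmless) reorganizations are that the paper runs the $s$-induction entirely at $r=1$, treating even and odd $\deg\fp_s$ uniformly, and then lifts to general $r$ by reducing modulo $\ell$ and identifying $\ell R_\ell^{(r)}\cong R_\ell^{(r-1)}$, whereas you handle an even-degree prime for all $r$ at once and run the mirror-image d\'evissage (modulo $\ell^{r-1}$, socle $\cong R_\ell^{(1)}$) only in the all-odd case.
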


\begin{proof} 
Note that the statement about ${\cE}_0(\fn,R_\ell^{(r)})^{\beps_{H(\fn)}}$ follows from the first statement 
by an argument similar to the argument in the proof of Proposition \ref{prop 3.2FT} combined with Remark \ref{remNnu}. 
Thus, it suffices to prove the first statement. Let $R = R_\ell^{(r)}$, which says that $R = R[\ell^r]$.\\

First, assume $r=1$. By Proposition~\ref{prop 3.4} the case when $s=1$ holds. Suppose $s>1$. Let $\beps_{H(\fn),s}$ be 
defined as in Lemma \ref{lem 3.8}. 
In particular, $\beps_{H(\fn),s} \neq \mathbf{1}, \beps_{H(\fn)}$.
Given $f \in \widetilde{\cE}(\fn,R)^{\beps_{H(\fn)}}$, put
$$\tilde{f} := f - \frac{qf^*(1)}{1-q} E^{\beps_{H(\fn),s}}.$$
By Proposition~\ref{propALOhta}, $\tilde{f}^*(\fm) = 0$ unless $\fp_{s} \mid \fm$; 
Lemma~\ref{lemLevelLow} then says that there exists $g \in \widetilde{\cH}(\fn/\fp_{s},R)$ such that
$\tilde{f} = g|B_{\fp_{s}}$ and 
$$\tilde{f}|W_{\fp_{s}} = (-1)^{\deg \fp_s} \left(f + \frac{q f^*(1)}{1-q} E^{\beps_{H(\fn),s}}\right) = g.$$ 
In particular, we have 
\begin{itemize}
\item[(i)] $$g+(-1)^{\deg \fp_s} g|B_{\fp_{s}} = 2(-1)^{\deg \fp_s}  f,$$
\item[(ii)] 
$$g- (-1)^{\deg \fp_s} g|B_{\fp_{s}} = g- (-1)^{\deg \fp_s} g|W_{\fp_{s}} = \frac{2(-1)^{\deg \fp_s} q f^*(1)}{1-q} \cdot E^{\beps_{H(\fp),s}}.$$
\end{itemize}

Consider the trace map : $\text{Tr}^{\fn}_{\fn/\fp_s}: \widetilde{\cH}(\fn,R) \rightarrow \widetilde{\cH}(\fn/\fp_s,R)$ defined by:
$$\text{Tr}^{\fn}_{\fn/\fp_s}(h):= h + h | W_{\fp_s} U_{\fp_s}.$$
Since $\ell \cdot f^*(1)= 0$, by Lemma~\ref{lem 3.8} we get $\text{Tr}^{\fn}_{\fn/\fp_s}(f^*(1) \cdot E^{\beps_{H(\fn),s}} ) = 0$ and
$$\text{Tr}^{\fn}_{\fn/\fp_s}(g) = (-1)^{\deg \fp_s} \text{Tr}^{\fn}_{\fn/\fp_s}(g |W_{\fp_s}).$$
The left hand side is equal to $(|\fp_s|+1)g$ as $g$ is of level $\fn/\fp_s$, and the right hand side is nothing but $(-1)^{\deg \fp_s} g\big| T_{\fp_s}$.
This implies that $g \in \widetilde{\cE}(\fn/\fp_s,R)^{\beps_{H(\fn/\fp_s)}}$ (when $\deg \fp_s$ is odd, we have $(|\fp_s|+1)g = -(|\fp_s|+1)g = 0$).
By induction on $s$, there exists $a \in R$ so that $g = a E^{\beps_{H(\fn/\fp_s)}}$,
and hence
$$f = 2^{-1}\left(g+ g|B_{\fp_s}\right) = 2^{-1} a \cdot E^{\beps_{H(\fn)}}.$$
${}$\\

Now assume $r>1$.
Given $f \in \widetilde{\cE}(\fn,R)^{\beps_{H(\fn)}}$, consider $f \bmod \ell \in \widetilde{\cE}(\fn,R/\ell R)^{\beps_{H(\fn)}}$. 
Since this latter module is spanned by $E^{\beps_{H(\fn)}}$, there exists $a \in R$ such that 
$$ f \equiv a \cdot E^{\beps_{H(\fn)}} \bmod \ell,$$
or equivalently 
$$f - a E^{\beps_{H(\fn)}} \in \widetilde{\cE}(\fn,\ell R)^{\beps_{H(\fn)}}.$$
Note that $\ell R$ is the maximal ideal in $R$, and the isomorphism $\iota_\ell : R/\ell^{r-1}R \cong \ell R$ (multiplication by $\ell$) induces the following (group) isomorphism:
$$\widetilde{\cE}(\fn,R/\ell^{r-1}R)^{\beps_{H(\fn)}} \cong \widetilde{\cE}(\fn,\ell R)^{\beps_{H(\fn)}}.$$
By the induction on $r$, there exists $b \in R/\ell^{r-1}R$ such that 
$$ f - a E^{\beps_{H(\fn)}} = \iota_\ell(b) E^{\beps_{H(\fn)}} \in \widetilde{\cE}(\fn,\ell R)^{\beps_{H(\fn)}}.$$
This completes the proof.
\end{proof}

The results of this section imply the following: 

\begin{thm}\label{thm 3.6FT} Let $\fn = \prod_{i=1}^s \fp_i \lhd A$ be a square-free ideal. 
Given a prime number $\ell$ not dividing $q(q-1)$ and a positive integer $r$,
we have that for $\beps \in \E$,
$$\cE_0(\fn,\Z/\ell^r\Z)^\beps \cong \begin{cases}
\Z/(\ell^r,\frac{N(\fn,\beps)}{\nu(\beps)})\Z, & \text{ if $\beps \neq \mathbf{1}$};\\
0, & \text{ if $\beps = \mathbf{1}$.} 
\end{cases}$$
\end{thm}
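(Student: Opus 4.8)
The plan is to assemble the theorem from Propositions~\ref{prop 3.2FT}, \ref{prop 3.4}, \ref{prop 3.5}, and \ref{prop 3.9} by Galois descent, the point being that $\Z/\ell^r\Z$ is in general not a coefficient ring (it need not contain the $p$-th roots of unity on which the Fourier theory rests). I set $R:=R_\ell^{(r)}=\Z_\ell[\zeta_p]/\ell^r\Z_\ell[\zeta_p]$, a coefficient ring in which $q$ and $q-1$ are units because $\ell\nmid q(q-1)$; note also that $\ell$ is odd, since otherwise $\ell=2$ would divide one of $q$, $q-1$. As $\ell\neq p$, the extension $\Z_\ell[\zeta_p]/\Z_\ell$ is finite \'etale and Galois with group $\Delta:=\Gal(\Q(\zeta_p)/\Q)\cong(\Z/p\Z)^\times$, so reduction modulo $\ell^r$ gives $R^\Delta=\Z/\ell^r\Z$, and a cochain is $\Z/\ell^r\Z$-valued exactly when it is fixed by the natural action of $\Delta$ on values.

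Since the operators $T_\fp$, $U_\fp$, $W_\fm$ and the harmonicity, Eisenstein, cuspidality, and Atkin--Lehner eigenvalue conditions cutting out $\cE_0(\fn,-)^\beps$ are all defined by relations with coefficients in $\Z[1/2]$, they are $\Delta$-equivariant and compatible with the inclusion $\Z/\ell^r\Z\hookrightarrow R$. Therefore
$$\cE_0(\fn,\Z/\ell^r\Z)^\beps=\bigl(\cE_0(\fn,R)^\beps\bigr)^\Delta.$$
To evaluate the right-hand side I would run the following case analysis over $R$. If $\beps=\bone$ then $\cE_0(\fn,R)^\beps=0$, by Proposition~\ref{prop 3.2FT} when $\bone\neq\beps_{H(\fn)}$ and by Proposition~\ref{prop 3.4} when $\bone=\beps_{H(\fn)}$ (the case of all $\deg\fp_i$ even). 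If $\beps\neq\bone,\beps_{H(\fn)}$, Proposition~\ref{prop 3.2FT} gives $\cE_0(\fn,R)^\beps=R[N(\fn,\beps)/(q+1)]\cdot E^\beps$. If $\beps=\beps_{H(\fn)}\neq\bone$, then $\cE_0(\fn,R)^\beps=R[N(\fn,\beps_{H(\fn)})]\cdot E^{\beps_{H(\fn)}}$: this is Proposition~\ref{prop 3.4} when at most one $\fp_i$ has odd degree, Proposition~\ref{prop 3.5} when at least two do and $(q+1,\deg\fp_1,\dots,\deg\fp_s)\in R^\times$ (i.e.\ $\ell\nmid(q+1)$ or $\ell\nmid\deg\fp_i$ for some $i$), and Proposition~\ref{prop 3.9} in the remaining case $\ell\mid q+1$ and $\ell\mid\deg\fp_i$ for all $i$. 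In every instance $\cE_0(\fn,R)^\beps=R[N(\fn,\beps)/\nu(\beps)]\cdot E^{\beps}$.

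It remains to turn this description into an isomorphism and take invariants; write $N=N(\fn,\beps)$ and $\nu=\nu(\beps)$. Because $E^\beps$ is $\Z$-valued (Lemma~\ref{lemEeps}), it is $\Delta$-fixed, so $x\mapsto xE^\beps$ is a $\Delta$-equivariant surjection $R[N/\nu]\twoheadrightarrow\cE_0(\fn,R)^\beps$, and I must check that its kernel $\mathrm{Ann}_R(E^\beps)\cap R[N/\nu]$ is trivial. By (\ref{eqE(1)}), $(E^\beps)^\ast(1)=(1-q^2)/(q\nu)$. For $\beps\neq\beps_{H(\fn)}$ this equals $(1-q)/q\in R^\times$, so $\mathrm{Ann}_R(E^\beps)=0$; the same holds for $\beps=\beps_{H(\fn)}$ when $\ell\nmid(q+1)$. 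When $\beps=\beps_{H(\fn)}$ and $\ell\mid(q+1)$, Remark~\ref{remNnu} gives $N(\fn,\beps_{H(\fn)})\equiv 2^s\pmod{q+1}$, whence $\ell\nmid N$ (as $\ell$ is odd), so $N\in R^\times$ and $R[N]=0$; thus the kernel again vanishes. Hence $\cE_0(\fn,R)^\beps\cong R[N/\nu]$ as $R[\Delta]$-modules, and passing to invariants
$$\bigl(R[N/\nu]\bigr)^\Delta=(R^\Delta)[N/\nu]=(\Z/\ell^r\Z)[N/\nu]\cong\Z/(\ell^r,N(\fn,\beps)/\nu(\beps))\Z,$$
which is the asserted value (and $0$ when $\beps=\bone$).

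The substance of the argument lies entirely in the cited propositions; the work specific to this theorem is the descent and the annihilator bookkeeping of the third paragraph. The one delicate point I expect is that $(E^\beps)^\ast(1)$ fails to be a unit precisely when $\beps=\beps_{H(\fn)}$ and $\ell\mid q+1$, and it is exactly there that the congruence $N\equiv 2^s\pmod{q+1}$ of Remark~\ref{remNnu} rescues the computation by forcing $R[N]=0$; the descent itself is routine once one observes that every defining operator has integer coefficients.
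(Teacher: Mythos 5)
Your proof is correct and takes essentially the same route as the paper: the paper's own proof of Theorem~\ref{thm 3.6FT} is a one-line descent from the case analysis of Propositions~\ref{prop 3.2FT}, \ref{prop 3.4}, \ref{prop 3.5}, \ref{prop 3.9}, justified by the observation that $R_\ell^{(r)}$ is a \emph{free} $\Z/\ell^r\Z$-module (so $\cE_0(\fn,R_\ell^{(r)})^\beps$ splits as a direct sum of copies of $\cE_0(\fn,\Z/\ell^r\Z)^\beps$ indexed by a basis), whereas you descend by taking Galois invariants of the unramified extension --- an equivalent mechanism --- and you make explicit the injectivity of $x\mapsto xE^\beps$ via $(E^\beps)^\ast(1)$ and Remark~\ref{remNnu}, which the paper leaves implicit. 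The only inaccuracy is that the group acting on $\Z_\ell[\zeta_p]$ over $\Z_\ell$ is the cyclic decomposition subgroup of $(\Z/p\Z)^\times$ generated by $\ell \bmod p$, not all of $\Gal(\Q(\zeta_p)/\Q)$; this is harmless, since the extension is unramified and the invariants of $R_\ell^{(r)}$ under the correct group are still $\Z/\ell^r\Z$.
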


\begin{proof}\label{rem3.13} 
This follows from the observation that the coefficient ring $\Z_\ell[\zeta_p]/\ell^r\Z_\ell[\zeta_p]$ is a free $\Z /\ell^r\Z$-module; cf.\ 
the proof of Corollary 6.9 in \cite{Pal}. 
\end{proof}


\section{Proof of the main theorem}\label{sCDG}

Let $\Omega$, $X_0(\fn)$, $J_0(\fn)$, $\cC(\fn)$, $\cT(\fn)$ be as in the introduction. 
The Hecke operators $T_\fp$ may also be defined as a correspondences on $X_0(\fn)$, so 
$T_\fp$ induce endomorphisms of $J_0(\fn)$.  Similarly, the Atkin-Lehner 
involutions induce automorphisms of $J_0(\fn)$.  
Let $\cO(\Omega)^\times$ be the group of non-vanishing holomorphic rigid-analytic functions on $\Omega$. 
The group $\GL_2(\Fi)$ act on $\cO(\Omega)^\times$ via $(f|\gamma)(z)=f(\gamma z)$. 
To each $f\in \cO(\Omega)^\times$ van der Put associated a 
harmonic cochain $r(f)\in \cH(\sT, \Z)$ so that the sequence  
\begin{equation}\label{eqvdPut}
0\to \C_\infty^\times \to \cO(\Omega)^\times\xrightarrow{r} \cH(\sT, \Z)\to 0
\end{equation}
is exact and $\GL_2(\Fi)$-equivariant; cf. \cite[Thm. 2.1]{vdPut}, \cite[(1.7.2)]{GR}.  
Let $\Delta(z)\in \cO(\Omega)^\times$ be the Drinfeld discriminant function defined on page 183 of \cite[p. 183]{Discriminant}. 
For a non-zero ideal $\fm\lhd A$, let 
$$\Delta_\fm(z):=\Delta|B_\fm(z)=\Delta(\fm z).$$

From now on we assume $\fn=\fp_1\cdots \fp_s$ is square-free with the given prime decomposition. Given $\beps\in \E$, we set 
$$
\Delta^\beps:=\prod_{\fd|\fn}\Delta_\fd^{\epsilon_\fd} \in \cO(\Omega)^\times. 
$$
(Recall that $\epsilon_\fd=\prod_{\fp_i|\fd}\eps_i$; see Notation \ref{notn2.3}.) 

The harmonic cochain $r(\Delta)$ has been extensively studied by Gekeler in \cite{Improper}, \cite{Discriminant}, where 
he shows that $r(\Delta)=q(1-q)H$; see Remark \ref{remH}. Therefore, by the same remark, for $e\in E(\sT)^+$,
$$
r(\Delta)(e)=(1-q)\tE(e) \quad \text{and}\quad r(\Delta)(\bar{e})=(q^2-1)+ (1-q)\tE(\bar{e}). 
$$
More importantly for us, we also get 
$$
E^\beps(e)=\frac{1}{\nu(\beps)(1-q)} r(\Delta^\beps)(e)
$$
for all $e\in E(\sT)^+$. The above equality holds for all $e\in E(\sT)$ if and only if $\beps\neq \bone$, since  
$E^\beps$ is harmonic if and only if $\beps\neq \bone$; see Lemma \ref{lemEeps}. 

\begin{lem}\label{lemrootDelta}
Let $\ell$ be a prime number not dividing $q(q-1)$. Let $\ell^n$ be the largest power of $\ell$ 
such that there exists an $\ell^n$-th root of $\Delta^\beps$ in $\cO(\Omega)^\times$. Then $\ell^n$ 
divides $\nu(\beps)$.   
\end{lem}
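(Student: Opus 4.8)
The plan is to convert the root-extraction problem for $\Delta^\beps$ into a divisibility statement for the van der Put cochain $r(\Delta^\beps)$, and then to detect that divisibility from a single well-chosen edge value of $E^\beps$.

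First I would extract from the exact sequence (\ref{eqvdPut}) the following equivalence: \emph{$\Delta^\beps$ admits an $\ell^n$-th root in $\cO(\Omega)^\times$ if and only if $\ell^n$ divides $r(\Delta^\beps)$ in $\cH(\sT,\Z)$.} One direction is immediate, since $r$ is a homomorphism. For the other, if $r(\Delta^\beps)=\ell^n c$ with $c\in\cH(\sT,\Z)$, surjectivity of $r$ gives $g_0$ with $r(g_0)=c$, whence $\Delta^\beps/g_0^{\ell^n}\in\ker r=\C_\infty^\times$; as $\C_\infty$ is algebraically closed this constant is an $\ell^n$-th power, and so is $\Delta^\beps$. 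Because $\ell^n\mid r(\Delta^\beps)$ simply means $\ell^n$ divides every value $r(\Delta^\beps)(e)$, the exponent $n$ in the lemma equals the content $c_\ell:=\min_{e\in E(\sT)}v_\ell\big(r(\Delta^\beps)(e)\big)$, and the assertion reduces to the inequality $c_\ell\le v_\ell(\nu(\beps))$.

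Next I would invoke the identity $r(\Delta^\beps)(e)=\nu(\beps)(1-q)E^\beps(e)$ valid for $e\in E(\sT)^+$, recorded just before the lemma. Since $\ell\nmid q(q-1)$ we have $\ell\nmid(1-q)$, so it is enough to exhibit one positively oriented edge $e_*$ with $\ell\nmid E^\beps(e_*)$: then $v_\ell\big(r(\Delta^\beps)(e_*)\big)=v_\ell(\nu(\beps))$ and hence $c_\ell\le v_\ell(\nu(\beps))$. To find such an edge I would use the Fourier coefficient $(E^\beps)^*(1)=(1-q^2)/(q\nu(\beps))$ from (\ref{eqE(1)}). Unwinding its definition gives $q\,(E^\beps)^*(1)=\sum_{c\in\F_q}E^\beps\!\begin{pmatrix}\pi_\infty^2 & c\pi_\infty\\ 0&1\end{pmatrix}\eta(-c\pi_\infty)$, a $\Z[\zeta_p]$-linear combination of values of $E^\beps$ on positive edges whose coefficients $\eta(-c\pi_\infty)$ are $p$-power roots of unity, hence $\ell$-units. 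The integer $q\,(E^\beps)^*(1)=(1-q^2)/\nu(\beps)$ equals $1-q$ when $\nu(\beps)=q+1$ and $1-q^2$ when $\nu(\beps)=1$; in either case it is an $\ell$-unit \emph{unless} $\beps=\beps_{H(\fn)}$ and $\ell\mid q+1$. Whenever it is an $\ell$-unit, the edge values $E^\beps(\cdots)$ cannot all be divisible by $\ell$ (otherwise this integer would lie in $\ell\Z[\zeta_p]\cap\Z=\ell\Z$, forcing $\ell\mid(1-q)$, against $\ell\nmid q(q-1)$, or $\ell\mid(1-q^2)$, against $\ell\nmid q(q-1)$ together with $\ell\nmid q+1$ in that sub-case), so one of these edges is the desired $e_*$.

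The hard part is the remaining case $\beps=\beps_{H(\fn)}$ with $\ell\mid q+1$, precisely where $(E^\beps)^*(1)$ is genuinely $\ell$-divisible and the argument above breaks down. Here I would test the identity edge $e_0$ instead: by (\ref{eqE(1)}) (equivalently, the computation in Lemma \ref{lemEeps}) one has $E^{\beps_{H(\fn)}}(e_0)=q\,N(\fn,\beps_{H(\fn)})$, and Remark \ref{remNnu} gives $N(\fn,\beps_{H(\fn)})\equiv 2^s\pmod{q+1}$. Since $\ell\mid q+1$ and $\ell$ is odd (because $q(q-1)$ is always even, so $\ell\nmid q(q-1)$ forces $\ell\neq2$), we get $\ell\nmid N(\fn,\beps_{H(\fn)})$, so $e_0$ serves as $e_*$ and again $c_\ell\le v_\ell(\nu(\beps))=0$. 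Combining the two cases yields $c_\ell\le v_\ell(\nu(\beps))$, i.e. $\ell^n\mid\nu(\beps)$.
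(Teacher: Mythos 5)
Your proof is correct and is essentially the paper's argument: both hinge on the van der Put sequence \eqref{eqvdPut} together with the two Fourier coefficients $(E^\beps)^0(1)=qN(\fn,\beps)/\nu(\beps)$ and $(E^\beps)^\ast(1)=(1-q^2)/(q\nu(\beps))$, and both dispose of the delicate case $\beps=\beps_{H(\fn)}$, $\ell\mid q+1$ via Remark \ref{remNnu} and the oddness of $\ell$. The only cosmetic difference is that you phrase the obstruction as non-$\ell$-divisibility of explicit edge values of $r(\Delta^\beps)$, whereas the paper phrases it as $\ell^n$-integrality of the Fourier coefficients of $r(f)$ for an $\ell^n$-th root $f$; these are the same computation.
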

\begin{proof}The following argument is essentially due to Gekeler; cf. \cite[Cor. 3.5]{Discriminant}. Let $f\in \cO(\Omega)^\times$
be such that $f^{\ell^n}=\Delta^\beps$. Since $r(f)\in \cH(\sT, \Z)$ is $\G_\infty$-invariant, it has Fourier expansion. 
Moreover, the Fourier coefficients are in $\Z[p^{-1}]$. On the other hand, 
$$
r(f)^0(1)=\ell^{-n}r(\Delta^\beps)^0(1)=\frac{\nu(\beps)(1-q)}{\ell^n}(E^\beps)^0(1) =\frac{q N(\fn, \beps)(1-q)}{\ell^n},
$$ 
$$
r(f)^\ast(1)=\ell^{-n}r(\Delta^\beps)^\ast(1)=\frac{\nu(\beps)(1-q)}{\ell^n}(E^\beps)^\ast(1) = \frac{(1-q)^2(1+q)}{q\ell^n}. 
$$
Thus, $\ell^n$ divides $N(\fn, \beps)$ and $(q+1)$; thus $\ell^n$ divides $\nu(\beps)$; cf. Remark \ref{remNnu}. 
\end{proof}

The cusps of $X_0(\fn)$ are in natural bijection with 
the cusps of $\G_0(\fn)\bs \sT$, and this bijection is compatible with the action of Hecke operators and Atkin-Lehner involutions; 
cf. \cite[(2.6)]{GR}. We will use the notation introduced in Lemma \ref{lemCusps} for the cusps of $X_0(\fn)$. 
Define a cuspidal divisor $D^{\beps}$ on $X_0(\fn)$ by 
$$
D^\beps=\sum_{\fd|\fn}\epsilon_\fd [\fd].  
$$
Note that 
$$
\deg(D^\beps)=\sum_{\fd|\fn}\epsilon_\fd = \prod_{i=1}^n(1+\eps_i).
$$
Hence $\deg(D^\beps)=0$ if $\beps\neq \bone$, 
and we can consider the class of $D^\beps$ in $\cC(\fn)$, which by abuse of notation we denote by the same symbol. 
Let $\langle D^\beps \rangle$ be the finite cyclic subgroup generated by $D^\beps$ in $\cC(\fn)$.  

\begin{prop}\label{thmCDG}
Assume $\beps=(\eps_1, \dots, \eps_s)\neq \bone$. 
\begin{enumerate}
\item We have $W_{\fp_i}(D^\beps)=\eps_i D^\beps$, $1\leq i\leq s$. 
\item Let $\ell$ be a prime number not dividing $q(q-1)$. Let $N$ be the order of $D^\beps$ in $\cC(\fn)$. 
Then $\ord_\ell(N)\geq \ord_\ell(N(\fn, \beps)/\nu(\beps))$. 
\end{enumerate}
\end{prop}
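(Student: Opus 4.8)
The plan for part (1) is a direct computation on the cusps. By Lemma~\ref{lemCusps}(2), $W_{\fp_i}[\fd]=[\fd\fp_i/(\fd,\fp_i)^2]$, so $W_{\fp_i}$ simply toggles whether $\fp_i\mid\fd$ and thus permutes the cusps $\{[\fd]:\fd\mid\fn\}$ as an involution $\fd\mapsto\fd'$. In either case ($\fp_i\mid\fd$ or $\fp_i\nmid\fd$), passing from $\fd$ to $\fd'$ deletes or inserts the single factor $\eps_i$ in $\epsilon_\fd=\prod_{\fp_j\mid\fd}\eps_j$, so $\epsilon_{\fd'}=\eps_i\epsilon_\fd$. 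Reindexing $W_{\fp_i}(D^\beps)=\sum_{\fd\mid\fn}\epsilon_\fd[\fd']$ by $\fe=\fd'$ then gives $\sum_{\fe\mid\fn}\epsilon_{\fe'}[\fe]=\eps_i\sum_{\fe\mid\fn}\epsilon_\fe[\fe]=\eps_i D^\beps$.

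For part (2), the plan is to compare $D^\beps$ with $\mathrm{div}(\Delta^\beps)$ and then extract roots. Since $\beps\neq\bone$, the exponents of $\Delta^\beps=\prod_{\fd\mid\fn}\Delta_\fd^{\epsilon_\fd}$ sum to $\deg(D^\beps)=\prod_i(1+\eps_i)=0$, so $\Delta^\beps$ is a weight-zero modular unit and $\mathrm{div}(\Delta^\beps)$ is a degree-zero cuspidal divisor. First I would determine its Atkin--Lehner eigenvalues: because $r(\Delta^\beps)=\nu(\beps)(1-q)E^\beps$ and $E^\beps\mid W_{\fp_i}=\eps_iE^\beps$ by Lemma~\ref{lemEeps}, the $\GL_2(\Fi)$-equivariance of the van der Put sequence (\ref{eqvdPut}) shows $r(\Delta^\beps\mid W_{\fp_i})=\eps_i r(\Delta^\beps)=r((\Delta^\beps)^{\eps_i})$; hence $\Delta^\beps\mid W_{\fp_i}$ and $(\Delta^\beps)^{\eps_i}$ differ by a constant and $W_{\fp_i}(\mathrm{div}\,\Delta^\beps)=\eps_i\,\mathrm{div}(\Delta^\beps)$. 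As $\ell\nmid q(q-1)$ forces $\ell$ to be odd, I may work over $\Z[1/2]$, where $\W$ acts on the $2^s$ cusps as the regular representation and the degree-zero cuspidal divisors split into one-dimensional eigenspaces, the $\beps$-eigenspace being spanned by $D^\beps$. Therefore $\mathrm{div}(\Delta^\beps)=c_\beps D^\beps$ for some integer $c_\beps$, and reading off the coefficient at $[1]$ (where $\epsilon_1=1$) identifies $c_\beps$ with the order of vanishing of $\Delta^\beps$ at the cusp $[1]$.

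The next step is to evaluate $\ord_\ell(c_\beps)$. For this I would invoke Gekeler's explicit computation of the divisor of the discriminant \cite{Discriminant} together with the dictionary between orders of vanishing at cusps and the van der Put cochain: by (\ref{eqE(1)}) the constant Fourier coefficient is $r(\Delta^\beps)^0(1)=\nu(\beps)(1-q)(E^\beps)^0(1)=q(1-q)N(\fn,\beps)$, and the order of vanishing at $[1]$ is governed by this term up to the factor $q(1-q)$. Since $q(1-q)$ is an $\ell$-adic unit for $\ell\nmid q(q-1)$, this yields $\ord_\ell(c_\beps)=\ord_\ell(N(\fn,\beps))$.

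Finally I would run the root-extraction argument. As $c_\beps D^\beps=\mathrm{div}(\Delta^\beps)$ is principal, the order $N$ of $D^\beps$ divides $c_\beps$; writing $c_\beps=Nk$ and choosing $u\in\cO(\Omega)^\times$ descending to a modular unit with $\mathrm{div}(u)=N D^\beps$, we get $\mathrm{div}(u^k)=\mathrm{div}(\Delta^\beps)$, so $\Delta^\beps=\mathrm{const}\cdot u^k$. Since $\C_\infty$ is algebraically closed, $\Delta^\beps$ is then a genuine $k$-th power, in particular an $\ell^{\ord_\ell k}$-th power, in $\cO(\Omega)^\times$; Lemma~\ref{lemrootDelta} forces $\ell^{\ord_\ell k}\mid\nu(\beps)$, i.e.\ $\ord_\ell k\leq\ord_\ell\nu(\beps)$. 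Combining, $\ord_\ell N=\ord_\ell c_\beps-\ord_\ell k\geq\ord_\ell N(\fn,\beps)-\ord_\ell\nu(\beps)=\ord_\ell\big(N(\fn,\beps)/\nu(\beps)\big)$, the last equality by Remark~\ref{remNnu}. I expect the main obstacle to be the middle step---pinning down $\ord_\ell(c_\beps)=\ord_\ell(N(\fn,\beps))$---since it rests on Gekeler's explicit divisor of $\Delta$ and the precise order-of-vanishing-versus-constant-term dictionary for the rigid-analytic uniformization, rather than on the formal eigenspace and root-extraction steps.
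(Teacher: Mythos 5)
Your overall architecture coincides with the paper's: part (1) is the same direct reindexing computation with Lemma~\ref{lemCusps}, and part (2) rests on the same two pillars, namely showing that $\mathrm{div}(\Delta^\beps)$ is an explicit multiple of $D^\beps$ and then feeding the resulting root of $\Delta^\beps$ into Lemma~\ref{lemrootDelta}. Your concluding root-extraction step (write $c_\beps=Nk$, produce $u$ with $\mathrm{div}(u)=ND^\beps$, absorb the constant using that $\C_\infty$ is algebraically closed, and bound $\ord_\ell k$ by $\ord_\ell\nu(\beps)$) is a correct and slightly more explicit rendering of the paper's one-line contradiction argument. Where you diverge is in identifying the scalar $c_\beps$: the paper simply computes $\ord_{[\fm]}\Delta_\fd=|\fn|\,|(\fm,\fd)^2|/(|\fm|\,|\fd|)$ from Gekeler's formulas (3.10)--(3.11) in \cite{Discriminant} and sums over $\fd\mid\fn$ to get the exact identity $\mathrm{div}(\Delta^\beps)=\epsilon_\fn N(\fn,\beps)D^\beps$ cusp by cusp, which makes your Atkin--Lehner eigenspace argument (correct, but extra machinery) unnecessary. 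The one step you rightly flag as the obstacle --- extracting $\ord_\ell(c_\beps)=\ord_\ell N(\fn,\beps)$ from $r(\Delta^\beps)^0(1)=q(1-q)N(\fn,\beps)$ --- is also the one place your write-up is not yet a proof: the asserted dictionary between the constant Fourier coefficient and the order of vanishing is stated without justification, and the constant term of the $\G_\infty$-Fourier expansion pertains to the cusp at infinity, which is $[\fn]$ (the orbit of $\binom{1}{0}$), not $[1]$; this misidentification happens to be harmless for $\ord_\ell(c_\beps)$ since the $[\fn]$-coefficient of $D^\beps$ is $\epsilon_\fn=\pm1$, but the normalization factor $q(1-q)$ in the dictionary itself needs the cusp-width computation that Gekeler's explicit formulas already encapsulate. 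In short: your proof is correct in outline and nearly identical in substance, but to close it you should replace the Fourier-coefficient dictionary by the direct evaluation of $\ord_{[\fm]}\Delta^\beps$ via \cite[(3.10)--(3.11)]{Discriminant}, which is exactly what the paper does.
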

\begin{proof} Using Lemma \ref{lemCusps}, we compute 
$$
W_{\fp_i}(D^\beps)=W_{\fp_i}\sum_{\fd|\fn,\  \fp_i| \fd} \epsilon_\fd [\fd] + 
W_{\fp_i}\sum_{\fd|\fn,\ \fp_i\nmid \fd} \epsilon_\fd [\fd] 
= \sum_{\fd|\fn,\ \fp_i| \fd} \epsilon_\fd [\fd/\fp_i] + \sum_{\fd|\fn,\ \fp_i\nmid \fd} \epsilon_\fd [\fd \fp_i] 
$$ 
$$
= \eps_i\sum_{\fd|\fn,\ \fp_i| \fd} \epsilon_{\fd/\fp_i} [\fd/\fp_i] + \eps_i\sum_{\fd|\fn,\ \fp_i\nmid \fd} \epsilon_{\fd \fp_i} [\fd \fp_i] 
=\eps_i D^\beps, 
$$
which proves part (1). 

By formulas (3.10) and (3.11) in \cite{Discriminant}, for divisors $\fm$ and $\fd$ of $\fn$ we have 
$$
\ord_{[\fm]}\Delta_\fd = \ord_{W_\fd [\fm]}\Delta = \frac{|\fn|\cdot |(\fm, \fd)^2|}{|\fm|\cdot |\fd|}. 
$$
Therefore
$$
\ord_{[\fm]}\Delta^\beps = \frac{|\fn|}{|\fm|}\sum_{\fd|\fn} \epsilon_\fd \frac{(\fm, \fd)^2}{|\fd|} 
= \frac{|\fn|}{|\fm|}\prod_{\fp_i|\fm}(1+\eps_i|\fp_i|)\prod_{\fp_i\nmid\fm}(1+\eps_i|\fp_i|^{-1})
$$
$$
= \prod_{\fp_i|\fm}(1+\eps_i|\fp_i|)\prod_{\fp_i\nmid\fm}(|\fp_i|+\eps_i)=\epsilon_{\fn/\fm}N(\fn, \beps)=\epsilon_\fm\epsilon_\fn N(\fn, \beps). 
$$
This implies 
$$
\mathrm{div}(\Delta^\beps)=\sum_{\fm|\fn}(\ord_{[\fm]}\Delta^\beps)[\fm]= \epsilon_\fn N(\fn, \beps) \sum_{\fm|\fn}\epsilon_\fm[\fm] 
= \epsilon_\fn N(\fn, \beps) D^\beps. 
$$
It is easy to see from this equality that if $\ord_\ell(N)< \ord_\ell(N(\fn, \beps)/\nu(\beps))$, then $\Delta^\beps$ 
has an $\ell^n$-th root in $\cO(\Omega)^\times$ with $n>\ord_\ell(\nu(\beps))$. But this would contradict 
Lemma \ref{lemrootDelta}.  
\end{proof}

\begin{thm}\label{thmLast}
Let $\ell$ be a prime number not dividing $q(q-1)$. 
Then 
$$
\cC(\fn)_\ell^\beps=\cT(\fn)_\ell^\beps\cong 
\begin{cases}
\Z_\ell\big/\frac{N(\fn, \beps)}{\nu(\beps)}\Z_\ell, & \text{if $\beps\neq \bone$};\\
0, & \text{if $\beps= \bone$}.
\end{cases}
$$
Moreover, if $\beps\neq \bone$, then $D^\beps$ generates $\cT(\fn)_\ell^\beps$. 
\end{thm}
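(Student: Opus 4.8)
The plan is to sandwich both $\cC(\fn)_\ell^\beps$ and $\cT(\fn)_\ell^\beps$ between a common lower and upper bound and force equality by counting. Note first that $\ell\nmid q(q-1)$ forces $\ell\neq 2$, so the decomposition into $\W$-eigenspaces of the $\ell$-primary parts is legitimate, and it suffices to treat each $\beps$ separately.

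First I would establish the upper bound on the rational torsion side. Since $X_0(\fn)$ is a Mumford curve over $F_\infty$, the Jacobian $J_0(\fn)$ has split toric reduction at $\infty$, and its rigid-analytic uniformization (van der Put, Gekeler--Reversat; cf. the exact sequence (\ref{eqvdPut}) and \cite{GR}) realizes the character lattice of the toric part as $\cH_0(\fn,\Z)$, with monodromy pairing the natural pairing on cuspidal harmonic cochains. Because $\ell\nmid q(q-1)$, the field $F_\infty=\F_q((\pi_\infty))$ contains no nontrivial $\ell$-power roots of unity, so the identity component of the N\'eron model at $\infty$ is a split torus with no $\ell$-primary $F_\infty$-rational torsion; hence reduction induces an injection $\cT(\fn)_\ell\hookrightarrow \Phi_\infty$ into the component group, compatibly with the Hecke operators and the Atkin--Lehner involutions. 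The Eichler--Shimura relation shows that $T_\fp-(|\fp|+1)$ ($\fp\nmid\fn$) annihilates $\cT(\fn)_\ell$, so its image lands in the Eisenstein part of $\Phi_\infty$; projecting onto the $\beps$-eigenspace and matching against the uniformization identifies the image of $\cT(\fn)_\ell^\beps$ with a subgroup of $\cE_0(\fn,\Z/\ell^n\Z)^\beps$ for all sufficiently large $n$ (this is the mechanism of P\'al \cite{Pal} in the prime-level case, and of \cite{PW1}). By Theorem \ref{thm 3.6FT}, the latter module is $0$ when $\beps=\bone$ and cyclic of order $\ell^{\ord_\ell(N(\fn,\beps)/\nu(\beps))}$ once $n\geq \ord_\ell(N(\fn,\beps)/\nu(\beps))$. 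In particular $\cT(\fn)_\ell^\bone=0$, whence also $\cC(\fn)_\ell^\bone=0$ since $\cC(\fn)\subseteq\cT(\fn)$, settling the case $\beps=\bone$; for $\beps\neq\bone$ this yields the upper bound $\lvert\cT(\fn)_\ell^\beps\rvert\leq \ell^{\ord_\ell(N(\fn,\beps)/\nu(\beps))}$.

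Next I would supply the matching lower bound on the cuspidal side for $\beps\neq\bone$ using the explicit divisor $D^\beps$. By Proposition \ref{thmCDG}(1), $W_{\fp_i}D^\beps=\eps_i D^\beps$, so $D^\beps$ lies in $\cC(\fn)^\beps$ and its $\ell$-primary part $\langle D^\beps\rangle_\ell$ is a cyclic subgroup of $\cC(\fn)_\ell^\beps$; by Proposition \ref{thmCDG}(2) its order $N$ satisfies $\ord_\ell(N)\geq \ord_\ell(N(\fn,\beps)/\nu(\beps))$. Since $\cC(\fn)\subseteq\cT(\fn)$, this produces the chain
\[
\ell^{\ord_\ell(N(\fn,\beps)/\nu(\beps))}\leq \lvert\langle D^\beps\rangle_\ell\rvert\leq \lvert\cC(\fn)_\ell^\beps\rvert\leq \lvert\cT(\fn)_\ell^\beps\rvert\leq \ell^{\ord_\ell(N(\fn,\beps)/\nu(\beps))}.
\]
Hence every inequality is an equality. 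As $\cT(\fn)_\ell^\beps$ is a subgroup of the cyclic group $\cE_0(\fn,\Z/\ell^n\Z)^\beps$ yet attains its full order, it is itself cyclic of that order; containing $\langle D^\beps\rangle_\ell$ of the same order, it coincides with it, so $D^\beps$ generates $\cT(\fn)_\ell^\beps=\cC(\fn)_\ell^\beps\cong \Z_\ell/\tfrac{N(\fn,\beps)}{\nu(\beps)}\Z_\ell$, as claimed.

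The main obstacle is the identification invoked in the first step: relating the component group $\Phi_\infty$ and the reduction map on $\cT(\fn)_\ell$ to the module $\cE_0(\fn,\Z/\ell^n\Z)^\beps$. This requires carefully tracking the monodromy pairing on $\cH_0(\fn,\Z)$ through the uniformization, verifying Hecke- and Atkin--Lehner-equivariance of the reduction, and invoking the Eichler--Shimura congruence to cut out the Eisenstein part on which the $\beps$-eigenspace projection can be compared with Theorem \ref{thm 3.6FT}. Once this analytic machinery (essentially drawn from \cite{Pal} and \cite{PW1}) is in place, the remaining counting is immediate from Theorem \ref{thm 3.6FT} and Proposition \ref{thmCDG}.
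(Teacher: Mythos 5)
Your proposal is correct and follows essentially the same route as the paper: an injection $\cT(\fn)_\ell\hookrightarrow \cE_{00}(\fn,\Z_\ell/\ell^n\Z_\ell)\subseteq\cE_0(\fn,\Z_\ell/\ell^n\Z_\ell)$ via the specialization into $\Phi_\infty$ (compatible with $\W$), combined with the lower bound from $D^\beps$ via Proposition \ref{thmCDG}, and then the counting argument forcing all inclusions in the chain $\langle D^\beps\rangle_\ell\subseteq\cC(\fn)_\ell^\beps\subseteq\cT(\fn)_\ell^\beps\subseteq\cE_0(\fn,\Z_\ell/\ell^n\Z_\ell)^\beps$ to be equalities. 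The only difference is that you spell out somewhat more of the uniformization/Eichler--Shimura mechanism that the paper delegates to citations of \cite{Pal}, \cite{PW1} and \cite{PW2}.
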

\begin{proof} As is explained in \cite[$\S$7.1]{PW1}, 
the canonical specialization of $\cT(\fn)$ into the component group $\Phi_\infty$ at $\infty$ of the N\'eron model of $J_0(\fn)$ 
induces an injective homomorphism 
$$
\cT(\fn)_\ell\hookrightarrow \cE_{00}(\fn, \Z_\ell/\ell^n\Z_\ell)
$$
for any $n\geq \ord_\ell(\# \Phi_\infty)$. Moreover, as follows from the discussion in \cite[$\S$2.6]{PW2},  
the above homomorphism is compatible with the action of $\W$. Therefore, we have 
\begin{equation}\label{eq-inclusions}
\langle D^\beps \rangle_\ell \subseteq \cC(\fn)_\ell^\beps\subseteq \cT(\fn)_\ell^\beps \subseteq 
\cE_{00}(\fn, \Z_\ell/\ell^n\Z_\ell)^\beps\subseteq \cE_{0}(\fn, \Z_\ell/\ell^n\Z_\ell)^\beps. 
\end{equation}

By Theorem \ref{thm 3.6FT}, if $\beps=\bone$ then $\cE_{0}(\fn, \Z_\ell/\ell^n\Z_\ell)^\beps=0$, and the claim of the theorem follows. 
Next, assume $\beps\neq \bone$. By Theorem \ref{thm 3.6FT}, 
$\cE_{0}(\fn, \Z_\ell/\ell^n\Z_\ell)^\beps \subseteq \Z_\ell\big/\frac{N(\fn, \beps)}{\nu(\beps)}\Z_\ell$. 
On the other hand, by Proposition \ref{thmCDG}, we have $\Z_\ell\big/\frac{N(\fn, \beps)}{\nu(\beps)}\Z_\ell\subseteq \langle D^\beps\rangle_\ell$. 
Comparing the orders of the groups, one immediately deduces that the inclusions in (\ref{eq-inclusions}) 
are equalities, which implies the claim of the theorem in this case.  
\end{proof}

\begin{rem} 
The Jacobian variety $J_0(\fn)$ has bad reduction at $\infty$ and at the primes dividing $\fn$. 
A crucial role in the previous proof plays the fact that the canonical specialization $\wp_\infty: \cT(\fn)_\ell\to \Phi_\infty$ 
is injective if $\ell\nmid (q-1)$. One can also consider the specializations $\wp_{\fp_i}: \cT(\fn)\to \Phi_{\fp_i}$ 
into the component groups of $J_0(\fn)$ at finite places. The behaviour of these maps is somewhat different from $\wp_\infty$. 
Without loss of generality, we consider $\wp_{\fp_1}$. 
The reduction $X_0(\fn)_{\F_{\fp_1}}$ consists of two irreducible components $Z$ and $Z'$, both isomorphic to $X_0(\fn/\fp_1)_{\F_{\fp_1}}$, 
intersecting transversally at certain points. The Atkin-Lehner involution $W_{\fp_1}$ interchanges $Z$ and $Z'$.  
The reductions of the cusps lie in the smooth locus, and no two cusps reduce to the same 
point. Moreover, the reductions of all $[\fm]$, with $\fp_1\nmid \fm$, lie on one component, say $Z$, and the reductions 
of $[\fm]$, with $\fp_1|\fm$, lie on the other component $Z'$. This implies that 
$$
\wp(D^\beps) = \prod_{i=2}^s(1+\eps_i)Z+\eps_1\prod_{i=2}^s(1+\eps_i)Z'=(1-\eps_1)\prod_{i=2}^s(1+\eps_i)(Z-Z'). 
$$
Let $z:=Z-Z'$. We conclude that $\wp(D^\beps)=2^s z$ if $\beps=(-1, 1,\dots, 1)$, and $\wp(D^\beps)=0$ 
otherwise. By the argument in the proof of Lemma 8.3 in \cite{PW2}, if $\ell\nmid (q+1)$, then 
$$
(\Phi_{\fp_1})_\ell = \langle z\rangle_\ell. 
$$
Moreover, by \cite[Thm. 5.3]{PW1}, if $\ell\nmid (q^2-1)$, then 
$$
(\Phi_{\fp_1})_\ell \cong \Z_\ell/N(\fn, \beps)\Z_\ell, \text{ where $\beps=(-1, 1,\dots, 1)$}. 
$$ 
Overall, we see that for $\ell\nmid (q^2-1)$, the map $\wp_{\fp_1}:\cC(\fn)_\ell^\beps \to (\Phi_{\fp_1})_\ell$ 
is an isomorphism if $\beps =(-1, 1,\dots, 1)$, and is $0$, otherwise. 
\end{rem}

\begin{prop}\label{propCp=0} Assume $\fn=\fp_1\cdots \fp_s$ is square-free. 
Then the exponent of $\cC(\fn)$ divides 
$$
\rho(\fn):= \prod_{i=1}^s\left((|\fp_i|-1)(|\fp_i|+1)^{s-1}\right).$$
In particular, $\cC(\fn)_p=0$. 
\end{prop}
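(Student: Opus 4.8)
The plan is to present $\cC(\fn)$ as a quotient of the cokernel of an explicit integer matrix whose Kronecker-product structure makes its exponent transparent.

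First I would fix notation. Write the $2^s$ cusps as $[\fd]$, $\fd\mid\fn$, and let $L_0$ be the lattice of degree-zero divisors supported on them, so that $\cC(\fn)$ is the image of $L_0$ in $\Pic^0(X_0(\fn))=J_0(\fn)$. The functions $\Delta_\fd$ all have the same weight, so any product $\prod_{\fd\mid\fn}\Delta_\fd^{\,n_\fd}$ with $\sum_\fd n_\fd=0$ is a rational function on $X_0(\fn)$, nowhere vanishing on $Y_0(\fn)$; its divisor therefore lies in $L_0$ and is principal. By the order formula $\ord_{[\fm]}\Delta_\fd=|\fn|\,|(\fm,\fd)^2|/(|\fm|\,|\fd|)$ recalled in the proof of Proposition \ref{thmCDG} and the linearity of $\ord_{[\fm]}$ in the exponents $n_\fd$, this divisor equals $M\vec n$, where $M=\big(\ord_{[\fm]}\Delta_\fd\big)_{\fm,\fd}$. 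Hence $M(L_0)$ is contained in the group of principal cuspidal divisors, $\cC(\fn)$ is a quotient of $L_0/M(L_0)$, and it suffices to bound the exponent of the latter.

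The crucial step is a factorization of $M$. Identifying $\fm$ and $\fd$ with the sets of primes dividing them, inspection of the order formula shows that the exponent of $|\fp_i|$ in $\ord_{[\fm]}\Delta_\fd$ is $1$ when $\fp_i$ divides both or neither of $\fm,\fd$, and $0$ otherwise, so that
$$M=\bigotimes_{i=1}^s\begin{pmatrix}|\fp_i| & 1\\ 1 & |\fp_i|\end{pmatrix}.$$
Each factor inverts to $(|\fp_i|^2-1)^{-1}\begin{pmatrix}|\fp_i|&-1\\-1&|\fp_i|\end{pmatrix}$, so with the integer matrix $M':=\bigotimes_i\begin{pmatrix}|\fp_i|&-1\\-1&|\fp_i|\end{pmatrix}$ one has $MM'=\prod_i(|\fp_i|^2-1)\cdot\id$. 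For $s\ge2$ this yields
$$\rho(\fn)\,M^{-1}=\frac{\prod_i(|\fp_i|-1)(|\fp_i|+1)^{s-1}}{\prod_i(|\fp_i|-1)(|\fp_i|+1)}\,M'=\Big(\prod_i(|\fp_i|+1)^{s-2}\Big)M',$$
an integer matrix. Thus for $x\in L_0$ I may write $\rho(\fn)x=M\big(\rho(\fn)M^{-1}x\big)$ with $\rho(\fn)M^{-1}x\in\Z^{2^s}$; since $M$ is invertible and carries $L_0$ into $L_0$ it preserves $L_0\otimes\Q$, so the preimage lies in $L_0\otimes\Q\cap\Z^{2^s}=L_0$. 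Hence $\rho(\fn)$ annihilates $L_0/M(L_0)$, and therefore $\cC(\fn)$.

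The case $s=1$ I would dispatch separately: there $\cC(\fp)$ is generated by $D^{(-1)}=[1]-[\fp]$, and $\Div(\Delta^{(-1)})=(|\fp|-1)D^{(-1)}$ by the computation in Proposition \ref{thmCDG}, so the exponent divides $|\fp|-1=\rho(\fp)$. Finally, as each $|\fp_i|=q^{\deg\fp_i}$ is a power of $p$, every factor $|\fp_i|\pm1$ of $\rho(\fn)$ is prime to $p$; hence $\rho(\fn)$ is prime to $p$ and $\cC(\fn)_p=0$. I expect the main obstacle to be spotting the Kronecker-product form of $M$, after which the inversion is immediate; the one subtlety to watch is that one must work inside the degree-zero lattice $L_0$ rather than all of $\Z^{2^s}$, which is exactly what sharpens the crude bound $\prod_i(|\fp_i|^2-1)$ to $\rho(\fn)$ and forces the separate treatment of $s=1$.
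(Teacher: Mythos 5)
Your proof is correct, but it takes a genuinely different route from the paper's. The paper argues by induction on $s$: it pulls back differences of cusps from level $\fn/\fp$ under the two degeneracy maps $\pi_\fp^*$ and $\widetilde{\pi}_\fp^*=(\pi_\fp\circ W_\fp)^*$ (pullback preserves principal divisors), solves linearly for $[\fm_1]_\fn-[\fm_2]_\fn$ and $[\fm_1\fp]_\fn-[\fm_2\fp]_\fn$, and kills the remaining differences $[\fm]_\fn-[\fm\fp]_\fn$ using the single modular unit $\Delta/\Delta_\fp$; the base case $s=1$ is quoted from Gekeler. You instead take the full $2^s\times 2^s$ matrix $M=(\ord_{[\fm]}\Delta_\fd)$ of the whole family of modular units, observe its Kronecker-product factorization, and invert it. This is a legitimate and clean alternative, closer in spirit to the Gekeler--Takagi style computation of cuspidal class numbers; it buys you more than the exponent, since the elementary divisors of $M$ restricted to $L_0$ (computable from the tensor factorization) bound the full group structure of $\cC(\fn)$, whereas the paper's induction only controls the exponent. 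Two minor points. First, your closing remark is backwards for $s\ge 3$: there $\rho(\fn)=\prod_i(|\fp_i|-1)(|\fp_i|+1)^{s-1}$ is a \emph{multiple} of the ``crude'' bound $\prod_i(|\fp_i|^2-1)$, so the crude bound already suffices (and automatically lands in $L_0$, since the row vector $\mathbf{1}^{t}M'$ equals $\prod_i(|\fp_i|-1)\,\mathbf{1}^{t}$); the passage to $L_0$ is genuinely needed only for $s=1$. Second, you should justify in one line that $\prod_{\fd\mid\fn}\Delta_\fd^{n_\fd}$ with $\sum_\fd n_\fd=0$ is $\G_0(\fn)$-invariant (all $\Delta_\fd$ have the same automorphy factor $(cz+d)^{q^2-1}$ on $\G_0(\fn)$); this is standard and is implicitly used by the paper as well, e.g.\ for $\Delta^\beps$ in the proof of Proposition \ref{thmCDG}, but it is the one place where your reduction of $\cC(\fn)$ to a quotient of $L_0/M(L_0)$ actually needs an argument.
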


\begin{proof} When $\fn$ is prime (i.e., $s=1$), this follows from a result of Gekeler; cf. \cite[Cor. 3.23]{Discriminant}. 
Now suppose $s>1$. 
Take $\fp = \fp_1$ and $\fn':= \fn/\fp$. 
Let $\pi_\fp: X_0(\fn) \rightarrow X_0(\fn')$ be the canonical projection map, 
and $\widetilde{\pi}_\fp:= \pi_\fp \circ W_\fp$. For each cusp $[\fm]_{\fn'}$ of $X_0(\fn')$ ($\fm \mid \fn'$), we have
\begin{align*}
\pi_\fp^*([\fm]_{\fn'}) &= |\fp|[\fm]_{\fn} + [\fm\fp]_{\fn} \in \Div(X_0(\fn)) \\ 
\widetilde{\pi}_\fp^*([\fm]_{\fn'}) &= |\fp|[\fm\fp]_{\fn} + [\fm]_{\fn} \in \Div(X_0(\fn)).
\end{align*}
Thus 
\begin{align*}
|\fp| \cdot \pi_\fp^*([\fm_1]_{\fn'}-[\fm_2]_{\fn'}) - \widetilde{\pi}_\fp^*([\fm_1]_{\fn'}-[\fm_2]_{\fn'})
&= (|\fp|^2-1)([\fm_1]_\fn - [\fm_2]_\fn),\\
|\fp| \cdot \widetilde{\pi}_\fp^*([\fm_1]_{\fn'}-[\fm_2]_{\fn'}) - \pi_\fp^*([\fm_1]_{\fn'}-[\fm_2]_{\fn'})
&= (|\fp|^2-1)([\fm_1\fp]_\fn - [\fm_2\fp]_\fn),
\end{align*}
and
$$\pi_\fp^*([\fm_1]_{\fn'}-[\fm_2]_{\fn'}) - \widetilde{\pi}_\fp^*([\fm_1]_{\fn'}-[\fm_2]_{\fn'})
= (|\fp|-1)\big(([\fm_1]_\fn - [\fm_1 \fp]_\fn)-([\fm_2]_\fn-[\fm_2\fp]_\fn)\big).$$
By induction hypothesis we have $\rho(\fn')\cdot \big([\fm_1]_{\fn'}-[\fm_2]_{\fn'}\big) = 0 \in \cC(\fn')$ for $\fm_1,\fm_2 \mid \fn'$.
Therefore
$$\rho(\fn')(|\fp|^2-1)([\fm_1]_\fn-[\fm_2]_\fn) = \rho(\fn')(|\fp|^2-1)([\fm_1\fp]_\fn-[\fm_2\fp]_\fn) = 0 \in \cC(\fn)$$
and
$$\rho(\fn')(|\fp|-1)\Big(([\fm_1]_\fn - [\fm_1 \fp]_\fn)-([\fm_2]_\fn-[\fm_2\fp]_\fn)\Big) = 0 \in \cC(\fn)$$
for every $\fm_1,\fm_2 \mid \fn'$.
Moreover, we have
$$\text{div}\left(\frac{\Delta}{\Delta_\fp}\right) = (|\fp|-1)\sum_{\fm \mid \fn'} \frac{|\fn'|}{|\fm|}([\fm]_\fn - [\fm\fp]_\fn) \in \Div^0(X_0(\fn)).$$
Thus for every $\fm_1 \mid \fn'$,
\begin{eqnarray}
\rho(\fn') \cdot \text{div}\left(\frac{\Delta}{\Delta_\fp}\right)
&=& \rho(\fn')(|\fp|-1)\left(\sum_{\fm \mid \fn'} \frac{|\fn'|}{|\fm|}\right) \cdot ([\fm_1]_\fn-[\fm_1\fp]_\fn) \nonumber\\
&=& \rho(\fn')(|\fp|-1)\left(\prod_{\fp' \mid \fn'}(|\fp'|+1)\right) \cdot ([\fm_1]_\fn-[\fm_1\fp]_\fn). \nonumber
\end{eqnarray}
Hence we conclude that for every $\fm_1,\fm_2$ dividing $\fn$,
$$\left(\rho(\fn')\cdot(|\fp|^2-1)\prod_{\fp' \mid \fn'}(|\fp'|+1)\right) \cdot ([\fm_1]_\fn - [\fm_2]_\fn) = \rho(\fn)\cdot ([\fm_1]_\fn - [\fm_2]_\fn) = 0 \in \cC(\fn).$$
\end{proof}

\renewcommand{\bibliofont}{\normalsize}
\bibliographystyle{amsplain}
\bibliography{EisTorsion2.bib}

\end{document}